\newtheorem{thm}{Theorem}[section]
\newtheorem{cor}[thm]{Corollary}
\newtheorem{lem}[thm]{Lemma}
\newtheorem{prop}[thm]{Proposition}
\newtheorem{cnj}[thm]{Conjecture}
\theoremstyle{mydefinition}
\newtheorem{dfn}[thm]{Definition}
\theoremstyle{myremark}
\newtheorem{rem}[thm]{Remark}
\newtheorem{prob}[thm]{Problem}
\newtheorem{Fact}{Fact}
\newcommand\CT{\mathop{\mathrm{CT}}}
\let\c@algorithm\c@thm
\numberwithin{algorithm}{section}
\title{Proof of a Conjecture on Young Tableaux with Walls}
\author{Zhicong Lin$^{\color{blue} \dag}$, Feihu Liu$^{\color{blue} \ddag}$\thanks{Corresponding author}, Jiahang Liu$^{\color{blue} \S}$, Jing Liu$^{\color{blue}\P}$, and Guoce Xin$^{\color{blue} \maltese}$
\\[2mm]
{\small $^{\color{blue} \dag, \P}$ Research Center for Mathematics and Interdisciplinary Sciences}\\[-0.8ex]
{\small Shandong University, Qingdao 266237, P.R.~China}\\
{\small $^{\color{blue} \ddag,\S,\maltese}$ School of Mathematical Sciences,}\\[-0.8ex]
{\small Capital Normal University, Beijing 100048, P.R.~China}\\
{\small $^{\color{blue}  \P}$ Department of Mathematics}\\[-0.8ex]
{\small Sungkyunkwan University, Suwon  16419, South Korea}\\
{\small {\color{blue} $^\dag$} Email address: linz@sdu.edu.cn}\\
{\small {\color{blue} $^\ddag$} Email address: feihu.liu@cnu.edu.cn}\\
{\small {\color{blue} $^\S$} Email address: jiahang-liu@foxmail.com}\\
{\small {\color{blue} $^\P$} Email address: lsweet@mail.sdu.edu.cn}\\
{\small {\color{blue} $^\maltese$} Email address: guoce\_xin@163.com}
}
\date{\today}
\begin{document}

\maketitle

\begin{abstract}
Banderier, Marchal, and Wallner considered Young tableaux with walls, which are similar to standard Young tableaux, except that local decreases are allowed at some walls. In this work, we prove a conjecture of Fuchs and Yu concerning the enumeration of two classes of three-row Young tableaux with walls. Combining with the work by Chang, Fuchs, Liu, Wallner, and Yu leads to the verification of a conjecture on tree-child networks proposed  by Pons and Batle. This conjecture was regarded as a specific and challenging problem in the Phylogenetics community until it was finally resolved by the present work.
\end{abstract}

\noindent
\begin{small}
\emph{2020 Mathematics subject classification}: Primary 05A15; Secondary 05A10; 05A17; 06A07.
\end{small}

\noindent
\begin{small}
\emph{Keywords}: Young tableaux; Tree-child networks; Exact enumeration; Recursions; Generating functions; Linear extensions.
\end{small}

\tableofcontents

\section{Introduction}
Young tableaux with walls were introduced by
Banderier, Marchal, and  Wallner~\cite{C.Banderier18}, and their enumeration is quite challenging.
The main objective of this paper is to prove an enumerative conjecture (see Theorem~\ref{MainConject}) of Fuchs and Yu~\cite{Fang}
concerning  two classes of three-row Young tableaux with walls. Tree-child networks are one of the most prominent network classes for modeling evolutionary processes involving reticulation events. Our result, together with the work by Chang, Fuchs, Liu, Wallner, and Yu~\cite{CFLWY}, leads to the verification of a conjecture due to Pons and Batle~\cite[Conjecture~1]{PB} on counting  tree-child networks. Before proceeding to state our main result, we need to introduce some notations and definitions related to Young tableaux with walls.

Let $\lambda=(\lambda_1,\lambda_2,\ldots)$ be a partition of $m$, denoted $\lambda \vdash m$.
The \emph{Young diagram} of $\lambda$ is obtained by drawing a collection of left-justified arrays of $m$ cells, where row $i$ has $\lambda_i$ cells reading from bottom to top.
For the sake of convenience, the Young diagram of $\lambda$ is also denoted by $\lambda$.
A \emph{standard Young tableau} of $\lambda$ is an assignment of natural numbers $1,2,\ldots,m$ to the cells of the Young diagram, each number appearing once, such that rows are increasing from left to right, and columns are increasing from bottom to top.

Recently, Banderier, Marchal, and Wallner~\cite{C.Banderier18,C.Banderier21} considered a variation of standard Young tableaux, called \emph{Young tableaux with walls}.
For a Young diagram, we put a bold red edge (horizontally or vertically) between the consecutive cells that are allowed to be decreasing. Such places where a decrease is allowed (but not compulsory) are drawn by a bold red edge, which we call a ``wall".
We refer to this type of Young diagram as \emph{Young diagram with walls}; see  Fig.~\ref{tab:young}~(a) for an example.
As shown in Fig.~\ref{tab:young}~(b), a \emph{Young tableau with walls} of $\lambda$ is a filling of the cells by labels $1,2,\dots, m=|\lambda|$, such that each label appears exactly once and the labels are increasing along each row and column (conditions for standard Young tableaux), but two labels separated by a wall need not be increasing.

\begin{figure}[htbp]
\centering
\subfloat[]{
\begin{tikzpicture}[
    cell/.style={draw, minimum size=1cm, anchor=center},
    wall/.style={line width=2pt, red}]
\node[cell] (a1) at (0,0) {};
\node[cell] (a2) at (1,0) {};
\node[cell] (a3) at (2,0) {};
\node[cell] (a4) at (3,0) {};

\node[cell] (b1) at (0,-1) {};
\node[cell] (b2) at (1,-1) {};
\node[cell] (b3) at (2,-1) {};
\node[cell] (b4) at (3,-1) {};
\node[cell] (b5) at (4,-1) {};

\node[cell] (c1) at (0,-2) {};
\node[cell] (c2) at (1,-2) {};
\node[cell] (c3) at (2,-2) {};
\node[cell] (c4) at (3,-2) {};
\node[cell] (c5) at (4,-2) {};
\node[cell] (c6) at (5,-2) {};
\node[cell] (c7) at (6,-2) {};

\draw[wall] (0.5,0.5) -- (0.5,-0.5);
\draw[wall] (1.5,0.5) -- (1.5,-0.5);

\draw[wall] (1.5,-1.5) -- (2.5,-1.5);
\draw[wall] (3.5,-1.5) -- (3.5,-2.5);
\end{tikzpicture}}
\hspace{15pt}
\subfloat[]{
\begin{tikzpicture}[
    cell/.style={draw, minimum size=1cm, anchor=center},
    wall/.style={line width=2pt, red}]
\node[cell] (a1) at (0,0) {10};
\node[cell] (a2) at (1,0) {6};
\node[cell] (a3) at (2,0) {8};
\node[cell] (a4) at (3,0) {14};

\node[cell] (b1) at (0,-1) {3};
\node[cell] (b2) at (1,-1) {4};
\node[cell] (b3) at (2,-1) {5};
\node[cell] (b4) at (3,-1) {13};
\node[cell] (b5) at (4,-1) {15};

\node[cell] (c1) at (0,-2) {1};
\node[cell] (c2) at (1,-2) {2};
\node[cell] (c3) at (2,-2) {7};
\node[cell] (c4) at (3,-2) {12};
\node[cell] (c5) at (4,-2) {9};
\node[cell] (c6) at (5,-2) {11};
\node[cell] (c7) at (6,-2) {16};

\draw[wall] (0.5,0.5) -- (0.5,-0.5);
\draw[wall] (1.5,0.5) -- (1.5,-0.5);

\draw[wall] (1.5,-1.5) -- (2.5,-1.5);
\draw[wall] (3.5,-1.5) -- (3.5,-2.5);
\end{tikzpicture}}
\caption{A Young diagram with walls and a Young tableau with walls.}
\label{tab:young}
\end{figure}
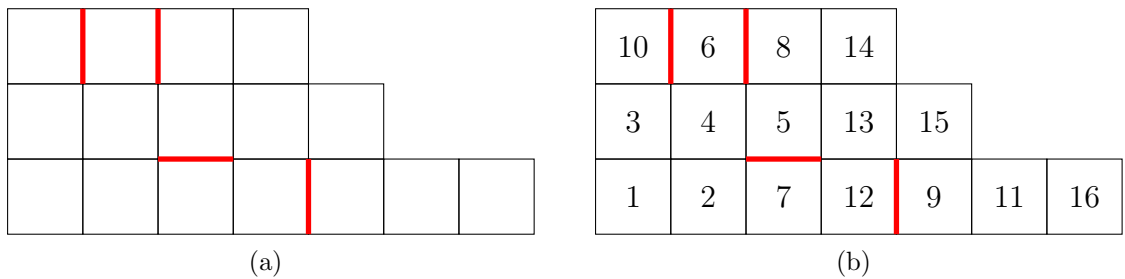

The enumeration of Young tableaux with walls is a difficult problem. Via bijections, hook-length-like formulas, and the density method, some results for the enumeration of rectangular Young tableaux with walls were proved by Banderier, Marchal, and Wallner~\cite{C.Banderier18}. Banderier and Wallner~\cite{C.Banderier21} further  considered Young tableaux with periodic walls and reported more enumerative results and  bijections with trees, lattice paths, or permutations.
By employing the kernel method and the lattice path enumeration theory, Liu and Xin~\cite{LiuXin-YTW} obtained the generating function for a class of two-row Young tableaux with periodic walls.

Here we are concerned  with two specified  classes of three-row Young tableaux with walls. Let $(n,n,k)\vdash2n+k$ with $n\geq k\geq 0$.
We place a vertical wall (bold red edge) between each pair of adjacent cells at the bottom of the Young diagram of $(n,n,k)$.
An example is shown in Fig.~\ref{tab:ank} for $n=7$ and $k=4$. Let $a_{n,k}$ denote the number of all such Young tableaux with walls.
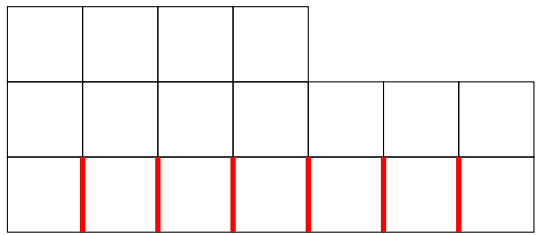
\begin{figure}[htbp]
\centering
\begin{tikzpicture}[
    cell/.style={draw, minimum size=1cm, anchor=center},
    wall/.style={line width=2pt, red}]
\node[cell] (a1) at (0,0) {};
\node[cell] (a2) at (1,0) {};
\node[cell] (a3) at (2,0) {};
\node[cell] (a4) at (3,0) {};

\node[cell] (b1) at (0,-1) {};
\node[cell] (b2) at (1,-1) {};
\node[cell] (b3) at (2,-1) {};
\node[cell] (b4) at (3,-1) {};
\node[cell] (b5) at (4,-1) {};
\node[cell] (b6) at (5,-1) {};
\node[cell] (b7) at (6,-1) {};

\node[cell] (c1) at (0,-2) {};
\node[cell] (c2) at (1,-2) {};
\node[cell] (c3) at (2,-2) {};
\node[cell] (c4) at (3,-2) {};
\node[cell] (c5) at (4,-2) {};
\node[cell] (c6) at (5,-2) {};
\node[cell] (c7) at (6,-2) {};

\draw[wall] (0.5,-1.5) -- (0.5,-2.5);
\draw[wall] (1.5,-1.5) -- (1.5,-2.5);
\draw[wall] (2.5,-1.5) -- (2.5,-2.5);
\draw[wall] (3.5,-1.5) -- (3.5,-2.5);
\draw[wall] (4.5,-1.5) -- (4.5,-2.5);
\draw[wall] (5.5,-1.5) -- (5.5,-2.5);
\end{tikzpicture}
\caption{A Young diagram of $(7,7,4)$ with walls at the bottom.}
\label{tab:ank}
\end{figure}
In particular, a closed-form expression for the sequence $(a_{n,n})_{n\geq 0}$ that was registered  as~\cite[A213863]{Sloane23} in the OEIS remains unknown.
Banderier and Wallner \cite[Theorem 4.1]{C.Banderier21} used the density method to derive an asymptotic expression for $a_{n,n}$.
Pons and Batle \cite[Proposition 7]{PB} established the asymptotic behavior of $a_{n,k}$ as $n\rightarrow \infty$ for fixed $k$.
On the other hand, it is easy to see that
\begin{equation}\label{eq:an0}
a_{n,0}=(2n-1)!!,
\end{equation}
the odd double factorial, while the sequence $(a_{n,1})_{n\geq 1}$ corresponds to [A122649] in the OEIS~\cite{Sloane23}.
Moreover, by considering the position of the largest label in such a Young tableaux with walls, one
can derive a recurrence relation for $a_{n,k}$:
\begin{equation}\label{rec:ank}
a_{n,k}=a_{n,k-1}+(2n+k-1)\cdot a_{n-1,k}.
\end{equation}
The first few values of $a_{n,k}$ are provided in Table~\ref{tabb-Ank}, for the sake of convenience.
\begin{tiny}
\begin{table}
    	\centering
    	\caption{Certain values of $a_{n,k}$.}\label{tabb-Ank}
    	\begin{tabular}{c||c|c|c|c|c|c|c}
    		\hline \hline
$n,k$ & 0 & 1 & 2 & 3 & 4 & 5 & 6 \\
    		\hline
$0$ & 1 &   &   &   &   &  &  \\
    		\hline
$1$ & 1 &  1 &   &   &   &  &  \\
    		\hline
$2$ & 3 &  7 &  7 &   &   &  &  \\
    		\hline
$3$ & 15 & 57  & 106  & 106  &   &  &  \\
    		\hline
$4$ & 105 & 561  & 1515  & 2575  & 2575  &  &  \\
    		\hline
$5$ & 945 &  6555 & 23220  &  54120 & 87595  &  87595 &  \\
    		\hline
$6$ & 10395 & 89055  &  390915  &  1148595  &  2462520  & 3864040 & 3864040 \\
    		\hline
    	\end{tabular}
\end{table}
\end{tiny}

Next, we introduce the second kind of Young tableaux with walls. Let $(n,n,n)\vdash 3n$ and $n\geq k$. We place a vertical wall (bold red edge) between each pair of adjacent cells at the bottom row of the Young diagram of $(n,n,n)$.
By removing any $n-k$ bottom cells, we obtain a family of ${n\choose k}$ \emph{deformed Young diagrams with walls}.
One such diagram is illustrated in Fig.~\ref{tab:bnk} for $n=7$ and $k=4$.
\begin{figure}[htbp]
\centering
\begin{tikzpicture}[
    cell/.style={draw, minimum size=1cm, anchor=center},
    wall/.style={line width=2pt, red}]
\node[cell] (a1) at (0,0) {};
\node[cell] (a2) at (1,0) {};
\node[cell] (a3) at (2,0) {};
\node[cell] (a4) at (3,0) {};
\node[cell] (a5) at (4,0) {};
\node[cell] (a6) at (5,0) {};
\node[cell] (a7) at (6,0) {};

\node[cell] (b1) at (0,-1) {};
\node[cell] (b2) at (1,-1) {};
\node[cell] (b3) at (2,-1) {};
\node[cell] (b4) at (3,-1) {};
\node[cell] (b5) at (4,-1) {};
\node[cell] (b6) at (5,-1) {};
\node[cell] (b7) at (6,-1) {};

\node[cell] (c1) at (1,-2) {};
\node[cell] (c2) at (3,-2) {};
\node[cell] (c3) at (4,-2) {};
\node[cell] (c4) at (5,-2) {};

\draw[wall] (3.5,-1.5) -- (3.5,-2.5);
\draw[wall] (4.5,-1.5) -- (4.5,-2.5);
\end{tikzpicture}
\caption{A case in a family of deformed Young diagrams with walls for $n=7$ and $k=4$.}
\label{tab:bnk}
\end{figure}
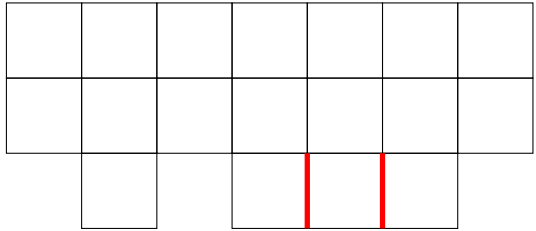
Similarly, a \emph{deformed Young tableau with walls} of this type is a filling of the cells by labels $1,2,\dots, 2n+k$, such that each label appears exactly once and the labels are increasing along each row and column, but two labels separated by a wall need not be increasing.
Let $b_{n,k}$ denote the total number of all deformed Young tableaux with walls on this family of deformed Young diagrams with walls.
It is well known that
\begin{equation}\label{eq:bn0}
b_{n,0}=\frac{1}{n+1}\binom{2n}{n}=:\mathrm{Cat}(n),
\end{equation}
where $\mathrm{Cat}(n)$ is the famous {\em Catalan number} appearing as~\cite[A000108]{Sloane23} in the OEIS. The sequence $(b_{n,1})_{n\geq 1}$ was registered as [A000531] in the OEIS~\cite{Sloane23}. Compared  to $a_{n,k}$, there exists no simple recursion for $b_{n,k}$, and computing $b_{n,k}$ appears to be more difficult than computing  $a_{n,k}$. A somewhat complicated  recurrence relation for a refinement of $b_{n,k}$ was proved in~\cite[Proposition~3.7]{CFLWY}. For the reader’s convenience, we list the first few values of $b_{n,k}$ in Table~\ref{tabb-Bnk}.

\begin{tiny}
\begin{table}
    	\centering
    	\caption{Certain values of $b_{n,k}$.}\label{tabb-Bnk}
    	\begin{tabular}{c||c|c|c|c|c|c|c }
    		\hline \hline
$n,k$ & 0 & 1 & 2 & 3 & 4 & 5 & 6 \\
    		\hline
$0$ & 1 &   &   &   &   &  &  \\
    		\hline
$1$ & 1 &  1 &   &   &   &  &  \\
    		\hline
$2$ & 2 &  7 &  7 &   &   &  & \\
    		\hline
$3$ & 5 & 38  & 106  & 106  &   &  & \\
    		\hline
$4$ & 14 & 187  & 1010  & 2575  & 2575  &  &  \\
    		\hline
$5$ & 42 &  874 & 7740  &  36080 & 87595  &  87595 &  \\
    		\hline
$6$ & 132  & 3958  &  52122  &  382865 &  1641680  & 3864040 & 3864040 \\
    		\hline
    	\end{tabular}
\end{table}
\end{tiny}

The main result of this paper is the following unexpected relationship between $a_{n,k}$ and $b_{n,k}$, which was originally conjectured by Fuchs and Yu~\cite{Fang}.
\begin{thm}[Fuchs and Yu's enumerative conjecture]\label{MainConject}
For $0\leq k\leq n$, we have
\begin{align}\label{JingLiu}
2^{n-k}\cdot a_{n,k}=(n-k+1)!\cdot b_{n,k}.
\end{align}
\end{thm}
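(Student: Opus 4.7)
The strategy is to prove \eqref{JingLiu} by induction on $n+k$, using the recurrence \eqref{rec:ank} to reduce the identity to a combinatorial recurrence that must be verified directly for $b_{n,k}$. Assume \eqref{JingLiu} holds for the pairs $(n,k-1)$ and $(n-1,k)$. Substituting $a_{n,k-1} = (n-k+2)!\, b_{n,k-1}/2^{n-k+1}$ and $a_{n-1,k} = (n-k)!\, b_{n-1,k}/2^{n-k-1}$ into \eqref{rec:ank} and simplifying shows that \eqref{JingLiu} at $(n,k)$ is equivalent to
\begin{align*}
2(n-k+1)\, b_{n,k} \;=\; (n-k+1)(n-k+2)\, b_{n,k-1} \;+\; 4(2n+k-1)\, b_{n-1,k}, \qquad 1 \le k \le n-1.
\end{align*}
The induction base cases are handled separately: when $k=0$, both sides of \eqref{JingLiu} equal $(2n)!/n!$ in view of \eqref{eq:an0} and \eqref{eq:bn0}, since $2^n(2n-1)!!=(2n)!/n!$ and $(n+1)!\,\mathrm{Cat}(n)=n!\binom{2n}{n}=(2n)!/n!$; when $k=n$, one has $a_{n,n}=b_{n,n}$ directly from the definitions, as both count tableaux on shape $(n,n,n)$ with walls between all adjacent bottom cells.

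The bulk of the work is thus to establish the above recurrence for $b_{n,k}$. The plan is to build on the refined recurrence of \cite[Proposition~3.7]{CFLWY}: introduce a refinement $b_{n,k}^{\star}$ of $b_{n,k}$ that records local data near the deformed bottom row (for instance, the pattern of present/absent bottom cells in the rightmost few columns together with certain rightmost labels), mirroring the refinement in \cite{CFLWY} but tuned to the target coefficients. Classifying deformed tableaux by this refined data and applying the position-of-largest-label analysis to each class yields a system of identities among the refined counts. Summation over the tracking parameter should then collapse to the target recurrence. Heuristically, the factor $(n-k+1)(n-k+2)$ is expected to arise from an ordered choice of two objects among $n-k+2$ positions (the $n-k$ absent bottom cells together with two boundary slots), while $4(2n+k-1)$ should come from removing the two cells in the rightmost column of $(n,n,n)$ paired with a binary orientation or refinement flag.

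The main obstacle is identifying the correct refinement: it must be coarse enough that the summation is tractable, yet fine enough to generate the relatively nontrivial coefficients $(n-k+1)(n-k+2)$ and $4(2n+k-1)$. Because $b_{n,k}$ is already a sum over the $\binom{n}{k}$ distinct deformed shapes, any cell-removal or local-structure argument has to be packaged so that this shape-sum is preserved under induction; this bookkeeping — together with the nontrivial interaction of the bottom-row walls with the column constraints — is the central technical difficulty. Once the refined recurrences are set up and summed, the target recurrence for $b_{n,k}$ follows, and the outer induction closes the proof of \eqref{JingLiu}.
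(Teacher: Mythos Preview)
Your reduction is correct: substituting the inductive hypotheses into \eqref{rec:ank} does yield precisely the recurrence
\[
b_{n,k}=\frac{n-k+2}{2}\, b_{n,k-1}+\frac{2(2n+k-1)}{n-k+1}\, b_{n-1,k},
\]
and your handling of the boundary cases $k=0$ and $k=n$ is fine. This recurrence is exactly Corollary~\ref{Canject-bnk} of the paper. The problem is that the paper obtains Corollary~\ref{Canject-bnk} \emph{as a consequence} of Theorem~\ref{MainConject}, not as a step toward it; indeed, in Section~\ref{Section-concluding-remark} the authors explicitly list a direct (combinatorial) proof of Corollary~\ref{Canject-bnk} as an open problem. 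So your reduction has replaced the original statement with one that is, from the paper's point of view, at least as hard.

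The second half of your proposal does not close this gap. You describe a hoped-for refinement $b_{n,k}^\star$ ``tuned to the target coefficients'' and offer heuristics for where the factors $(n-k+1)(n-k+2)$ and $4(2n+k-1)$ might come from, while conceding that ``the main obstacle is identifying the correct refinement.'' That is the entire difficulty, and nothing in your outline identifies it. The refinement in~\cite[Proposition~3.7]{CFLWY} does not obviously produce these specific coefficients upon summation, and the summation over $\binom{n}{k}$ deformed shapes makes any local cell-removal argument delicate in exactly the way you note. As written, this is a plan for a plan, not a proof.

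For comparison, the paper's route is entirely different: it first establishes a semi-closed formula for $a_{n,k}$ (Theorem~\ref{Semi-closed-formula-Ank}), introduces a three-parameter extension $b_{n,m,k}$ with a clean recurrence (Proposition~\ref{Definition-bnk-Proposition}), and then defines an auxiliary sequence $\omega_{n,m,k}$ whose initial values encode the desired formula for $b_{n,k}$. The heart of the argument is showing $\omega_{n,m,k}=b_{n+m,m,k}$, which reduces to the vanishing $\omega_{n,k-1,k}=0$; this is proved via an explicit unwound recursion (Lemma~\ref{Theorem-Omega-k-1}) and a nontrivial binomial-sum identity established by constant term extraction (Lemma~\ref{Lemma-Sum-Free-i}). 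None of this machinery appears in your proposal.
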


It is straightforward to verify that
\begin{equation}\label{rela:oddcat}
2^n\cdot(2n-1)!!=(n+1)!\cdot\mathrm{Cat}(n),
\end{equation}
 from which we see that Theorem~\ref{MainConject} holds for $k=0$ in view of~\eqref{eq:an0} and~\eqref{eq:bn0}.
Thus, Theorem~\ref{MainConject} can be considered as an extension of the relationship~\eqref{rela:oddcat} between the odd factorial numbers and the Catalan numbers.
Theorem~\ref{MainConject} was inspired by recent work of Chang et al. in~\cite{CFLWY} and closely connected to a  conjecture of Pons and Batle~\cite[Conjecture~1]{PB} on counting  tree-child networks that we next elaborate on.

The class of tree-child networks is an increasingly prominent class of phylogenetic networks.
Following~\cite{PB}, a {\em phylogenetic network with $n$ leaves} is a rooted acyclic digraph without parallel edges such that all nodes belong to one of the following categories:
\begin{itemize}
\item[(1)] A unique {\em root} which has indegree 0 and outdegree 1.
\item[(2)] {\em Leaves} are nodes that have indegree 1 and outdegree 0. They are bijectively labeled with elements in  $\{1,2,\ldots,n\}$.
\item[(3)] {\em Tree nodes} which have indegree 1 and outdegree 2.
\item[(4)] {\em Reticulation nodes} which have indegree 2 and outdegree 1.
\end{itemize}
\begin{figure}[htbp]
\centering
\begin{tikzpicture}[scale=1.3,
    % 节点样式定义（与原图一致）
    node base/.style={circle, draw, fill=black, inner sep=0pt, minimum size=4pt}, % 普通节点（黑色实心圆）
    root node/.style={circle, draw, fill=white, inner sep=0pt, minimum size=4pt}, % 根节点（空心圆）
    child node/.style={draw, fill=red, inner sep=0pt, minimum size=4pt},
]

% 1. 绘制所有节点（按原图布局定位）
\node[root node] (root) at (0, 4) {}; % 根节点（最顶部空心圆）
\node[node base] (n1) at (0, 3.5) {}; % 根的子节点
\node[node base] (n2) at (-1, 2.5) {}; % 右侧中间节点
\node[node base] (n3) at (0.5, 3) {};
\node[child node] (n4) at (0, 2.5) {};
\node[node base] (n5) at (-0.5, 2) {};\node at (-0.5,1.3) {1};
\node[node base] (n6) at (-1.5, 2) {};\draw[->] (n2) -- (n6);\node at (-1.5,1.8) {3};
\node[node base] (n7) at (-0.5, 1.5) {};\draw[->] (n5) -- (n7);

\node[node base] (n8) at (1, 1.5) {};\draw[->] (n4) -- (n8);\node at (1,0.8) {4};
\node[node base] (n9) at (1.5, 2) {};\draw[->] (n3) -- (n9);\draw[->] (n9) -- (n8);
\node[node base] (n10) at (1, 1) {};\draw[->] (n8) -- (n10);
\node[node base] (n11) at (2, 1.5) {};\draw[->] (n9) -- (n11);\node at (2,1.3) {2};

% 2. 绘制边（连接所有节点，复刻原图结构）
\draw[->] (root) -- (n1);

\draw[->] (n1) -- (n2);\draw[->] (n1) -- (n3);
\draw[->] (n3) -- (n4);\draw[->] (n4) -- (n5);
\draw[->] (n2) -- (n5);

% 1. 绘制右图
\node[root node] (r1) at (5, 4) {}; % 根节点（最顶部空心圆）
\node[node base] (v1) at (5, 3.5) {}; % 根的子节点
\node[node base] (v2) at (4, 2.5) {}; % 右侧中间节点
\node[node base] (v3) at (5.5, 3) {};
\node[node base] (v4) at (5, 2.5) {};\node[node base] (r2) at (5.5, 2) {};
\node[node base] (r3) at (5.2, 1.5) {};\draw[->] (r2) -- (r3);\node at (5.2,1.3) {5};
\node[node base] (v5) at (4.5, 2) {};\node at (4.5,1.3) {1};\draw[->] (v4) -- (r2);

\node[node base] (v6) at (3.5, 2) {};\draw[->] (v2) -- (v6);\node at (3.5,1.8) {3};
\node[node base] (v7) at (4.5, 1.5) {};\draw[->] (v5) -- (v7);

\node[node base] (v8) at (6, 1.5) {};\draw[->] (v4) -- (v8);\node at (6,0.8) {4};
\node[node base] (v9) at (6.5, 2) {};\draw[->] (v3) -- (v9);\draw[->] (v9) -- (v8);
\node[node base] (v10) at (6, 1) {};\draw[->] (v8) -- (v10);
\node[node base] (v11) at (7, 1.5) {};\draw[->] (v9) -- (v11);\node at (7,1.3) {2};

% 2. 绘制边（连接所有节点，复刻原图结构）
\draw[->] (r1) -- (v1);

\draw[->] (v1) -- (v2);\draw[->] (v1) -- (v3);
\draw[->] (v3) -- (v4);\draw[->] (v4) -- (v5);
\draw[->] (v2) -- (v5);
\end{tikzpicture}

\caption{
Two phylogenetic networks: only the right one is tree-child.}
\label{fig:phylogenetic_network}
\end{figure}
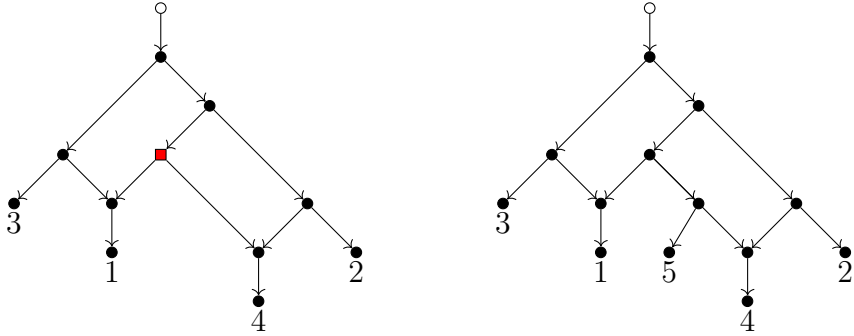
A phylogenetic network is {\em tree-child} if, for each non-leaf node, at least one of its
children is a tree node or leaf.
 See Fig.~\ref{fig:phylogenetic_network} for two examples of phylogenetic networks, where only the right one is tree-child. In a phylogenetic network with $n$ leaves, $k$ reticulation nodes, and $t$ tree nodes, since the sum of the outdegrees equals that of the indegrees, we have
$$
n+k=t+1.
$$
Phylogenetic networks and tree-child networks have been extensively investigated  from the enumerative aspect~\cite{Bouvel,CFY,CFLWY,CZ,FLY,FYZ,FYZ0,PB}. In particular, Pons and Batle~\cite{PB} proposed the following enumerative conjecture.

\begin{cnj}[\text{Pons and Batle~\cite[Conjecture~1]{PB}}] \label{Pons-Batle}
\label{conj:PB}
Let $\mathcal{TC}_{n,k}$ be the set of all  tree-child networks with $n$ leaves and $k$ reticulation nodes. Then
\begin{equation}\label{eq:conPB}
|\mathcal{TC}_{n,k}|=\frac{n!}{(n-k)!}\cdot a_{n-1,k}.
\end{equation}
\end{cnj}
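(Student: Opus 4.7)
The plan is to derive Conjecture~\ref{conj:PB} as a short consequence of two ingredients: Theorem~\ref{MainConject}, proved in this paper, and the prior enumerative work of Chang, Fuchs, Liu, Wallner and Yu~\cite{CFLWY}. The CFLWY paper encodes tree-child networks, via a decomposition that tracks the reticulation nodes and their parents and then translates the resulting linearly ordered data into fillings of the deformed three-row shapes of Section~1, obtaining a formula of the form
\[
|\mathcal{TC}_{n,k}|\;=\;\frac{n!}{2^{\,n-1-k}}\,b_{n-1,k}.
\]
Once this is accepted from \cite{CFLWY}, the remaining task is purely algebraic: use Theorem~\ref{MainConject} to replace $b_{n-1,k}$ by the corresponding value of $a_{n-1,k}$.

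Concretely, I would first note that we may assume $0\le k\le n-1$, since $\mathcal{TC}_{n,n}=\varnothing$ by the constraint $n+k=t+1\ge 1+k$ on the number $t$ of tree nodes, so both sides of~\eqref{eq:conPB} vanish when $k\ge n$. For $0\le k\le n-1$, Theorem~\ref{MainConject} applied with $n$ replaced by $n-1$ (legitimate because $k\le n-1$) gives
\[
2^{\,n-1-k}\,a_{n-1,k}\;=\;(n-k)!\,b_{n-1,k},
\]
hence $b_{n-1,k}=2^{\,n-1-k}a_{n-1,k}/(n-k)!\,$. Substituting this into the CFLWY formula yields
\[
|\mathcal{TC}_{n,k}|\;=\;\frac{n!}{2^{\,n-1-k}}\cdot\frac{2^{\,n-1-k}\,a_{n-1,k}}{(n-k)!}\;=\;\frac{n!}{(n-k)!}\,a_{n-1,k},
\]
which is precisely the Pons--Batle identity~\eqref{eq:conPB}. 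As a sanity check, the case $k=0$ collapses via~\eqref{eq:an0} to $|\mathcal{TC}_{n,0}|=n!\,(2n-3)!!$, recovering the classical count of rooted binary phylogenetic trees on $n$ labeled leaves.

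The main obstacle in this program is not this one-line substitution but the two inputs it relies on. The CFLWY formula is established in~\cite{CFLWY} and we take it as given. The genuine work is in Theorem~\ref{MainConject}: as emphasized after~\eqref{rec:ank}, the sequence $b_{n,k}$ admits no simple recursion parallel to the clean recurrence for $a_{n,k}$, and even the diagonal $(a_{n,n})$ lacks a known closed form, so proving the unexpected identity $2^{n-k}a_{n,k}=(n-k+1)!\,b_{n,k}$ requires genuinely new combinatorial input (matching recurrences, a bijection, or a generating-function argument). Once Theorem~\ref{MainConject} is in hand, however, Conjecture~\ref{conj:PB} follows immediately from the computation displayed above, so no further machinery on the tree-child side is needed.
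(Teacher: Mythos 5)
Your proof is correct and follows exactly the route the paper takes: it invokes \cite[Theorem~3.4]{CFLWY} for $|\mathcal{TC}_{n,k}|=\frac{n!}{2^{n-k-1}}b_{n-1,k}$ and then substitutes via Theorem~\ref{MainConject} with $n$ replaced by $n-1$. The extra details you supply (the shift $n\mapsto n-1$, the degenerate case $k\ge n$, and the $k=0$ sanity check) are harmless elaborations of the same one-line argument.
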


In fact, $a_{n,k}$ is interpreted in~\cite{PB} as some Yamanouchi-like  words, which are in one-to-one correspondence (see~\cite[Proposition~7.10.3~(d)]{RP.Stanley2024}) with the  Young tableaux with walls of shape $(n,n,k)$. Conjecture~\ref{conj:PB} stems from the work by Fuchs, Yu, and Zhang~\cite{FYZ} and has received considerable attention in recent years~\cite{CFLWY, FLY}. In particular, Fuchs, Liu, and Yu~\cite{FLY} confirmed Conjecture~\ref{conj:PB}  for one-component tree-child
networks which are an important building block of (general) tree-child networks (see~\cite{CZ}). Note that  several interesting consequences of Conjecture~\ref{conj:PB} have been discussed in~\cite{PB,CZ}; see also Sec.~\ref{apps:thm1.1} for some of them.

Conjecture~\ref{conj:PB} now follows as a corollary of our main result,  Theorem~\ref{MainConject}.
\begin{proof}[{\bf Proof of Conjecture~\ref{conj:PB}}]
It was proved in~\cite[Theorem~3.4]{CFLWY} that
$$
|\mathcal{TC}_{n,k}|=\frac{n!}{2^{n-k-1}}\cdot b_{n-1,k},
$$
which,  when combined with  Theorem~\ref{MainConject},  implies~\eqref{eq:conPB}.
\end{proof}

The rest of this paper is organized as follows.
Sec.~\ref{proof:conj1} is devoted to the proof of Theorem~\ref{MainConject}.
Sec.~\ref{Section-Generating-function} focuses on investigating the generating function for $b_{n,k}$.  Via linear extensions of some posets and the principle of inclusion-exclusion, we derive in Sec.~\ref{sec:LExten} a recurrence for $b_{n,k}$.
Finally, we end this paper with concluding remarks and related open problems in Sec.~\ref{Section-concluding-remark}.

\section{Proof of Theorem~\ref{MainConject}}
\label{proof:conj1}

This section is devoted to the proof of Theorem~\ref{MainConject}. Since our proof is somewhat intricate, we first intend to outline the key steps of our proof:
\begin{enumerate}
\item Starting from~\eqref{rec:ank}, we derive a semi-closed-form expression for $a_{n,k}$ (see Theorem~\ref{Semi-closed-formula-Ank}).
\item We introduce an extension  of $b_{n,k}$, denoted $b_{n,m,k}$,  by generalizing  the deformed Young tableaux with walls. The extension  $b_{n,m,k}$ admits a nice recurrence relation (see~Proposition~\ref{Definition-bnk-Proposition}):
\begin{equation}\label{rec:bnmk2}
b_{n,m,k}=(m-k+1)b_{n,m,k-1} + b_{n,m-1,k} + b_{n-1,m,k},
\end{equation}
which can be applied to compute $b_{n,k}$ efficiently, as  $b_{n,k}=b_{n,n,k}$.
\item Inspired  by the favorable recursion for $b_{n,m,k}$ and the conjectured relationship~\eqref{JingLiu} with $a_{n,k}$, we introduce a variant of $b_{n,m,k}$, denoted $\omega_{n,m,k}$, which satisfies a variant of~\eqref{rec:bnmk2} with different initial conditions.
\item To finish the proof, in view of the construction of $\omega_{n,m,k}$ and Theorem~\ref{Semi-closed-formula-Ank}, it remains to show that $\omega_{n,m,k}=b_{n+m,m,k}$. To this end,   we establish two key lemmas to verify that $\omega_{n,m,k}$ and $b_{n+m,m,k}$ share the same initial conditions, a part that constitutes the most technical aspect of our proof.
\end{enumerate}

\subsection{A semi-closed-form expression for $a_{n,k}$ }
This subsection aims to prove the following semi-closed-form expression for $a_{n,k}$.

\begin{thm}\label{Semi-closed-formula-Ank}
Let $0\leq k\leq n$. We have
\begin{align*}
a_{n,k}=\sum_{i=0}^k \frac{\gamma_{k-i}}{i!}\cdot (2n+k+i-1)!!,
\end{align*}
where $\gamma_k$ are rational numbers satisfying   the recurrence relation
\begin{align}\label{Recurren-Ck0}
\gamma_k=\frac{1}{(3k-3)!!} \left(-\sum_{i=1}^k \frac{\gamma_{k-i}}{i!}\cdot (3k+i-3)!! \right)
\end{align}
with the initial condition $\gamma_0=1$.
\end{thm}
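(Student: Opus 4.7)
The strategy is to verify the claimed identity by induction on $k$, treating $\gamma_0,\gamma_1,\ldots$ as formal rational unknowns to be pinned down one at a time. Define
\[\tilde{a}_{n,k}:=\sum_{i=0}^k \frac{\gamma_{k-i}}{i!}(2n+k+i-1)!!,\]
so the goal is $a_{n,k}=\tilde{a}_{n,k}$. The base case $k=0$ is immediate from~\eqref{eq:an0} once $\gamma_0=1$.

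The key algebraic step is to show that $\tilde{a}_{n,k}$ satisfies the same recurrence as $a_{n,k}$, namely $\tilde{a}_{n,k}=\tilde{a}_{n,k-1}+(2n+k-1)\tilde{a}_{n-1,k}$, irrespective of the value of $\gamma_k$. This is a short manipulation using the basic identity $(2n+k+i-1)!!=(2n+k+i-1)(2n+k+i-3)!!$: one computes
\[\tilde{a}_{n,k}-(2n+k-1)\tilde{a}_{n-1,k}=\sum_{i=0}^k\frac{\gamma_{k-i}}{i!}\,i\,(2n+k+i-3)!!,\]
the $i=0$ summand vanishes, and the remaining sum reindexes (via $j=i-1$) to exactly $\tilde{a}_{n,k-1}$. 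Thus the recurrence holds identically in $n$ and in $\gamma_k$, so $\gamma_k$ must be fixed by a boundary condition rather than by matching the recurrence itself.

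To supply that boundary condition, I would use the combinatorial fact that $a_{k-1,k}=0$: the shape $(k-1,k-1,k)$ is not a valid partition, and the observation $a_{k,k}=a_{k,k-1}$ visible in Table~\ref{tabb-Ank} (together with~\eqref{rec:ank} specialized at $n=k$) consistently forces $a_{k-1,k}=0$. Imposing $\tilde{a}_{k-1,k}=0$ amounts to
\[\sum_{i=0}^k\frac{\gamma_{k-i}}{i!}(3k+i-3)!!=0,\]
which, after isolating the $i=0$ term, is precisely~\eqref{Recurren-Ck0}. With $\gamma_k$ so defined, both $a_{n,k}$ and $\tilde{a}_{n,k}$ satisfy the same first-order linear recurrence in $n$ with the same value at $n=k-1$, so they coincide for all $n\geq k-1$, closing the inductive step.

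I do not anticipate a serious obstacle. The only genuine insight is that the ansatz satisfies the recurrence for every $\gamma_k$, so matching a combinatorial sequence collapses to fitting a single degenerate boundary value $a_{k-1,k}=0$; the remaining verifications are mechanical double-factorial arithmetic and the outer induction on $k$.
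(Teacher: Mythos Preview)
Your proposal is correct and follows essentially the same approach as the paper: define $\tilde{a}_{n,k}$, verify it satisfies recurrence~\eqref{rec:ank} by the double-factorial manipulation you describe, and use the boundary condition $\tilde{a}_{k-1,k}=0$ to recover~\eqref{Recurren-Ck0}. The only cosmetic difference is that you frame the argument as an induction on $k$, whereas the paper simply notes that $\tilde{a}_{n,k}$ and $a_{n,k}$ share the same recurrence and initial conditions.
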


\begin{rem}
We are surprised to learn  that Theorem~\ref{Semi-closed-formula-Ank} was already known in~\cite[Proposition~6]{PB}. However, unaware of their work, we independently discovered this result.
Actually, starting from~\eqref{rec:ank} we derive the following recurrence relation for $a_{n,k}$:
\begin{equation}\label{eq:recank2}
a_{n,k}=a_{n,k-1}+\sum_{i=k}^{n-1}\prod_{j=i}^{n-1}(2(j+1)+k-1)\cdot a_{i,k-1}.
\end{equation}
Using the above recursion, we produce explicit formulae  of $a_{n,k}$ for some fixed small values of $k$ in Proposition~\ref{proposition-ank-init-term}, which leads us to find Theorem~\ref{Semi-closed-formula-Ank}.
\end{rem}

Banderier and Wallner \cite[Theorem 4.1]{C.Banderier21} used the density method to derive an asymptotic expression for $a_{n,n}$.
As a special case of Theorem~\ref{Semi-closed-formula-Ank}, we have the following semi-closed exact expression for $a_{n,n}$.
\begin{cor}\label{Proposition-formula-ann}
For $n\geq 0$, we have
\begin{align*}
a_{n,n}=\sum_{i=0}^n \frac{\gamma_{n-i}}{i!}\cdot (3n+i-1)!!,
\end{align*}
where $\gamma_{j}$ satisfying the recurrence relation \eqref{Recurren-Ck0} and $\gamma_{0}=1$.
\end{cor}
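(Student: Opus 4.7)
The plan is to deduce Corollary~\ref{Proposition-formula-ann} directly from Theorem~\ref{Semi-closed-formula-Ank} by specializing the parameter $k$ to $n$. The hypothesis $0 \leq k \leq n$ of the theorem is trivially satisfied when $k = n$, so the theorem applies and essentially nothing remains to be done beyond substitution. This is the entire strategy: treat the corollary as a one-line consequence of the already-established theorem, rather than re-deriving anything.

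Concretely, Theorem~\ref{Semi-closed-formula-Ank} asserts
\[
a_{n,k} \;=\; \sum_{i=0}^{k} \frac{\gamma_{k-i}}{i!} \cdot (2n+k+i-1)!!
\]
for every $0 \leq k \leq n$, where the sequence $(\gamma_j)_{j\ge 0}$ of rational numbers is defined intrinsically by the recurrence~\eqref{Recurren-Ck0} with the initial condition $\gamma_0 = 1$, and in particular does \emph{not} depend on $n$ or $k$. Putting $k = n$ replaces the summation upper limit by $n$, the coefficient $\gamma_{k-i}$ by $\gamma_{n-i}$, and the double factorial $(2n+k+i-1)!!$ by $(2n+n+i-1)!! = (3n+i-1)!!$, which yields exactly the expression displayed in the corollary. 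The intrinsic definition of $(\gamma_j)$ guarantees that the same sequence occurs in both statements, so no separate recursion or initial-condition check is required.

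The only real obstacle lies upstream in proving Theorem~\ref{Semi-closed-formula-Ank} itself; once that result is established, the corollary is free. As a sanity check before publishing, I would compute $\gamma_0,\gamma_1,\ldots,\gamma_4$ from~\eqref{Recurren-Ck0}, plug them into the displayed formula for small $n$, and verify that the resulting values of $a_{n,n}$ agree with the diagonal entries $1,1,7,106,2575,87595,3864040,\ldots$ of Table~\ref{tabb-Ank}. Any discrepancy would signal a sign or indexing error in the specialization $k=n$; agreement would confirm that the substitution has been carried out correctly and that the statement of the corollary matches the diagonal interpretation.
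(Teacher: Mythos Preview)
Your proposal is correct and matches the paper's approach exactly: the paper introduces the corollary with the phrase ``As a special case of Theorem~\ref{Semi-closed-formula-Ank}'' and provides no further proof, so specializing $k=n$ is precisely what is intended.
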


Since Theorem~\ref{Semi-closed-formula-Ank} is crucial in proving Theorem~\ref {MainConject}, we include its proof below, for  the sake of completeness.

\begin{proof}[{\bf Proof of Theorem~\ref{Semi-closed-formula-Ank}}]
For convenience, we set
\begin{align*}
\tilde{a}_{n,k}=\sum_{i=0}^k \frac{\gamma_{k-i}}{i!}\cdot (2n+k+i-1)!!.
\end{align*}
We aim to show that $\tilde{a}_{n,k}$ and $a_{n,k}$ share the same recurrence relation and initial conditions.
Firstly, we have $\tilde{a}_{n,0}=(2n-1)!!=a_{n,0}$.
Secondly, the $\tilde{a}_{n,k}$ satisfies the following equation:
\begin{align*}
(2n+k-1)\tilde{a}_{n-1,k}=&(2n+k-1)\sum_{i=0}^k \frac{\gamma_{k-i}}{i!}\cdot (2n+k+i-3)!!
\\=&(2n+k+i-1-i)\sum_{i=0}^k \frac{\gamma_{k-i}}{i!}\cdot (2n+k+i-3)!!
\\=&\sum_{i=0}^k \frac{\gamma_{k-i}}{i!}\cdot (2n+k+i-1)!!-\sum_{i=0}^k \frac{i\cdot\gamma_{k-i}}{i!}\cdot (2n+k+i-3)!!
\\=&\tilde{a}_{n,k}-\sum_{i=1}^{k} \frac{\gamma_{k-i}}{(i-1)!}\cdot (2n+k+i-3)!!
\\=&\tilde{a}_{n,k}-\sum_{i=0}^{k-1} \frac{\gamma_{k-i-1}}{i!}\cdot (2n+k+i-2)!!
\\=&\tilde{a}_{n,k}-\tilde{a}_{n,k-1}.
\end{align*}
Therefore, we obtain
$$\tilde{a}_{n,k}=\tilde{a}_{n,k-1}+(2n+k-1)\tilde a_{n-1,k}.$$
In view of~\eqref{rec:ank},  $\tilde{a}_{n,k}$ and $a_{n,k}$ share the same recurrence relation.
Finally, it follows from~\eqref{Recurren-Ck0} that
$$\tilde{a}_{k-1,k}=\sum_{i=0}^{k} \frac{\gamma_{k-i}}{i!} (3k+i-3)!!=0.$$
This is consistent with $a_{k-1,k}=0$, and thus $a_{n,k}=\tilde{a}_{n,k}$.
This completes the proof.
\end{proof}

\subsection{An extension of $b_{n,k}$ and the introduction of its variant $\omega_{n,m,k}$}
\label{Section-Extension-of-bnk}

In this section, we consider a natural  extension of $b_{n,k}$ and introduce  its variant $\omega_{n,m,k}$ based on Theorem~\ref{Semi-closed-formula-Ank}.

Let $n\geq m\geq k\geq 0$ and $(n,m,m)\vdash n+2m$.
We place a vertical wall (bold red edge) between each pair of adjacent cells at the top row of the Young diagram of $(n,m,m)$.
By removing any $m-k$ cells at the top, we obtain a family of deformed Young diagrams with walls.
One such diagram is illustrated in Fig.~\ref{tab:bnmk} for $n=9$, $m=7$, and $k=4$.
Similarly, a deformed Young tableau with walls of this type is a filling of the cells by labels $1,2,\dots, n+m+k$, such that each label appears exactly once and the labels are increasing along each row and column, but two labels separated by a wall need not be increasing.
Let $b_{n,m,k}$ denote the total number of deformed Young tableaux with walls on this family of deformed Young diagrams with walls.

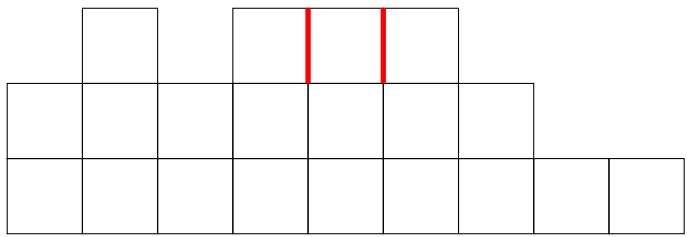
\begin{figure}[htbp]
\centering
\begin{tikzpicture}[
    cell/.style={draw, minimum size=1cm, anchor=center},
    wall/.style={line width=2pt, red}]

\node[cell] (a2) at (1,0) {};
\node[cell] (a4) at (3,0) {};
\node[cell] (a5) at (4,0) {};
\node[cell] (a6) at (5,0) {};

\node[cell] (b1) at (0,-1) {};
\node[cell] (b2) at (1,-1) {};
\node[cell] (b3) at (2,-1) {};
\node[cell] (b4) at (3,-1) {};
\node[cell] (b5) at (4,-1) {};
\node[cell] (b6) at (5,-1) {};
\node[cell] (b7) at (6,-1) {};

\node[cell] (c1) at (0,-2) {};
\node[cell] (c2) at (1,-2) {};
\node[cell] (c3) at (2,-2) {};
\node[cell] (c4) at (3,-2) {};
\node[cell] (c5) at (4,-2) {};
\node[cell] (c6) at (5,-2) {};
\node[cell] (c7) at (6,-2) {};
\node[cell] (c8) at (7,-2) {};
\node[cell] (c9) at (8,-2) {};

\draw[wall] (3.5,0.5) -- (3.5,-0.5);
\draw[wall] (4.5,0.5) -- (4.5,-0.5);
\end{tikzpicture}
\caption{A case in a family of deformed Young diagrams with walls for $n=9$, $m=7$, and $k=4$.}
\label{tab:bnmk}
\end{figure}

When $k=0$, the hook-length formula \cite[Corollary 7.21.6]{RP.Stanley2024} for counting standard Young tableaux gives
$$b_{n,m,0}=\frac{(n+m)!(n-m+1)}{m!(n+1)!}.$$
Let
\begin{align*}
C(x)=\sum_{n\geq 0}\mathrm{Cat}(n) x^n=\sum_{n\geq 0}\frac{1}{n+1}\binom{2n}{n} x^n=\frac{1-\sqrt{1-4x}}{2x}
\end{align*}
be the generating function of Catalan numbers.
Then we obtain
$$\sum_{0\leq m\leq n}b_{n,m,0}x^ny^m=\frac{1-yC(xy)}{1-x-y}.$$

\begin{prop}\label{prop:bnnk}
For $0\leq k\leq n$, we have $b_{n,n,k}=b_{n,k}$.
\end{prop}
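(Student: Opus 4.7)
The plan is to exhibit an explicit bijection between the two families of deformed Young tableaux with walls, based on the observation that the underlying shape $(n,n,n)$ is a $3\times n$ rectangle and is therefore invariant under a $180^\circ$ rotation. Both $b_{n,n,k}$ and $b_{n,k}$ count fillings of shapes obtained from this same rectangle by removing $n-k$ cells and placing walls between consecutive cells of a single extreme row; the sole difference is that in the definition of $b_{n,n,k}$ the walls and the removed cells lie in the top row (row $3$), whereas in the definition of $b_{n,k}$ they lie in the bottom row (row $1$). This built-in symmetry is the whole content of the proposition.

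Concretely, given a tableau $T$ counted by $b_{n,n,k}$, the plan is to form $T'$ in two steps: first rotate the underlying shape (together with its walls and its set of removed cells) by $180^\circ$ about the center of the $3\times n$ rectangle; then replace every label $\ell$ occurring in $T$ by its complement $2n+k+1-\ell$, using the fact that the total number of cells is $3n-(n-k)=2n+k$, so that this complementation is a bijection on the label set $\{1,2,\ldots,2n+k\}$.

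It then remains to verify that $T'$ is a valid deformed Young tableau with walls in the sense defining $b_{n,k}$. There are three short checks. First, the rotation sends the top row of $(n,n,n)$ to the bottom row, so the $n-k$ removed cells and the walls land precisely in the positions prescribed by the definition of $b_{n,k}$, and the resulting shape runs over the correct family of $\binom{n}{k}$ deformed diagrams as $T$ does. Second, the complement map $\ell\mapsto 2n+k+1-\ell$ reverses all inequalities between labels, while the $180^\circ$ rotation reverses the left-to-right and bottom-to-top reading directions; the two reversals cancel, so an increasing sequence along a row or column of $T$ becomes an increasing sequence along the corresponding row or column of $T'$. Third, a wall encodes the \emph{absence} of an inequality constraint between two adjacent entries, which is symmetric in the two entries and hence is preserved by the involution.

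Since the construction $T\mapsto T'$ is manifestly its own inverse, it is a bijection, yielding $b_{n,n,k}=b_{n,k}$. I do not anticipate any technical obstacle: the argument is purely a symmetry of the $3\times n$ rectangle, and the only thing to confirm carefully is the bookkeeping that rotation exchanges "top" with "bottom" both for the walls and for the removed cells.
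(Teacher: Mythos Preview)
Your proposal is correct and coincides with the paper's own argument: the paper also uses the involution ``flip vertically, then horizontally, then replace each entry $i$ by $2n+k+1-i$,'' which is exactly your $180^\circ$ rotation followed by complementation. The verification you outline (shape/walls/removed cells are exchanged, and the two order-reversals cancel) is precisely what is implicit in the paper's one-line proof.
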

\begin{proof}
A simple bijection readily yields $b_{n,n,k}=b_{n,k}$.
For example, a Young tableau with walls in the count of $b_{n,n,k}$ can be transformed into one in the count of $b_{n,k}$ by first flipping it vertically, then horizontally, and finally replacing each entry $i$ with $2n+k+1-i$.
\end{proof}

\begin{prop}\label{Definition-bnk-Proposition}
The sequence $b_{n,m,k}$ is uniquely determined by the recurrence relation
\begin{equation}\label{rec:bnmk}
b_{n,m,k}=(m-k+1)b_{n,m,k-1} + b_{n,m-1,k} + b_{n-1,m,k}
\end{equation}
for $n\geq 2$ and $0\leq k\leq m\leq n$ with the initial conditions
$$
b_{0,0,0}=b_{1,0,0}=b_{1,1,0}=b_{1,1,1}=1\quad\text{and}\quad b_{m-1,m,k}=b_{n,k-1,k}=0.
$$
\end{prop}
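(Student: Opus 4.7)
The plan is to prove the recurrence by tracking the position of the largest label $N = n+m+k$ in a deformed Young tableau with walls counted by $b_{n,m,k}$, and then to verify the stated initial conditions by direct inspection, with uniqueness following by induction.

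First, I would classify the possible locations of $N$. Since $N$ is the maximum label, the cell containing it must be an outer corner: every cell immediately above or to the right must be either absent or separated by a wall. In the present setup walls appear only between horizontally adjacent cells of the top row, so a direct case analysis on the three rows yields exactly three mutually exclusive possibilities. Case (A) is that $N$ lies in the rightmost cell of the bottom row, which is allowed only when $n>m$ (otherwise the cell above, belonging to the middle row, would exist with no wall). Case (B) is that $N$ lies at the rightmost cell of the middle row, which is allowed only when position $m$ of the top row is vacant. Case (C) is that $N$ lies in some cell $(i,\text{top})$ with $i\in R$, where $R$ denotes the set of occupied top-row positions; this is always allowed, because top-row cells have no cell above and adjacent top-row cells are separated by walls.

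Next, I would set up a delete-and-insert bijection in each case. In case (A), removing the cell containing $N$ leaves a deformed tableau on shape $(n-1,m,m)$ with an identical top row, contributing $b_{n-1,m,k}$. In case (B), removing the cell containing $N$ leaves a deformed tableau on shape $(n,m-1,m-1)$ whose top row is a $k$-subset of $\{1,\dots,m-1\}$, contributing $b_{n,m-1,k}$. In case (C), removing the cell at position $i\in R$ leaves a tableau whose top row is $R'=R\setminus\{i\}$ of size $k-1$; summing over $R$ and $i$ is equivalent to summing over tableaux in $b_{n,m,k-1}$ with top row $R'$ and over the $m-(k-1)=m-k+1$ choices of a vacant position into which to reinsert $N$, which contributes $(m-k+1)\,b_{n,m,k-1}$. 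Adding the three contributions gives the stated recurrence \eqref{rec:bnmk}. For the remaining assertions, the four unit values $b_{0,0,0}=b_{1,0,0}=b_{1,1,0}=b_{1,1,1}=1$ correspond to Young diagrams on at most three cells, each admitting a unique increasing filling, and the vanishing conditions $b_{m-1,m,k}=b_{n,k-1,k}=0$ simply encode the standing constraint $n\geq m\geq k$. Uniqueness is then automatic: each term on the right-hand side of \eqref{rec:bnmk} strictly decreases the sum $n+m+k$, so induction on that sum, together with these explicit base cases, pins down every entry.

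The main obstacle I anticipate lies in case (C): one must check carefully that after deleting the cell labeled $N$ at position $i$, the wall pattern on the remaining top row $R'$ matches the convention that walls occur between cells of $R'$ that are adjacent in the ambient diagram $(n,m,m)$, and symmetrically that reinserting $N$ at any vacant position $i\notin R'$ restores the wall pattern of $R$ without violating any constraint. This is true because the walls created or destroyed at position $i$ impose only trivial conditions (since $N$ is the maximum of the tableau), while the walls between cells of $R'$ not involving $i$ are preserved verbatim by both operations; nonetheless, this bookkeeping is the only technically delicate point of the argument.
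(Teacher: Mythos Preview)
Your proof is correct and follows essentially the same approach as the paper's: both argue by locating the maximum label $N=n+m+k$ in the bottom, middle, or top row and removing it, yielding the three terms of the recurrence, with the boundary conventions $b_{m-1,m,k}=b_{n,k-1,k}=0$ handling the degenerate cases. Your treatment is in fact more careful than the paper's, especially the explicit verification of the wall-pattern compatibility in case~(C) and the induction on $n+m+k$ for uniqueness, which the paper leaves implicit.
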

\begin{proof}
The first part of the initial condition clearly holds. The equalities $b_{m-1,m,k}=b_{n,k-1,k}=0$ are related with the
recurrence
\begin{align*}
b_{n,m,k}=(m-k+1)\cdot b_{n,m,k-1} + b_{n,m-1,k} + b_{n-1,m,k}\quad \text{ for }\quad n\geq 1,\quad 0\leq k\leq m\leq n,
\end{align*}
which is obtained by considering where the largest element $v=n+m+k$ is located. This is classified by the
following three cases:

Case 1: $v$ is in the bottom row, which forces $n\geq m+1$. Removing $v$ gives a Young tableau with walls counted by $b_{n-1,m,k}$, but note that
$b_{m-1,m,k}$ has to be interpreted as $0$;

Case 2: $v$ is in the middle row, which forces $m\geq k+1$. Removing $v$ gives a Young tableau with walls counted by $b_{n,m-1,k}$, but note that
$b_{n,k-1,k}$ has to be interpreted as $0$;

Case 3: $v$ is in the top row. Removing $v$ gives a Young tableau with walls counted by $b_{n,m,k-1}$, while inserting $v$ back have $m-(k-1)$ choices.

This completes the proof.
\end{proof}

Inspired by Theorem~\ref{Semi-closed-formula-Ank} and Proposition~\ref{Definition-bnk-Proposition}, we introduce the following variant of $b_{n,m,k}$.
\begin{dfn}[A variant of $b_{n,m,k}$]
We define the sequence $\omega_{n,m,k}$ by the recurrence
\begin{equation}\label{rec:omega}
\omega_{n,m,k}=\omega_{n-1,m+1,k}-(m-k+2)\omega_{n-1,m+1,k-1}-\omega_{n-2,m+1,k}
\end{equation}
for $n+m\geq 2$ and $0\leq k-1\leq m$,
with the initial conditions $\omega_{1,0,0}=1$, $\omega_{-1,m,k}=0$ and
\begin{equation}\label{omega-n=0mk}
\omega_{0,m,k}=\sum_{i=0}^k \frac{\gamma_{k-i}}{i!}\cdot \frac{2^{m-k}}{(m-k+1)!}\cdot (2m+k+i-1)!!,
\end{equation}
where $\gamma_{j}$ satisfy recurrence relation \eqref{Recurren-Ck0} and initial condition $\gamma_{0}=1$.
\end{dfn}

The rest of this section is devoted to proving the following relationship, which implies
Theorem~\ref{MainConject} in view of Theorem~\ref{Semi-closed-formula-Ank}.
\begin{thm}\label{Theorem=omega=bnmk}
We have $\omega_{n,m,k}=b_{n+m,m,k}$.
\end{thm}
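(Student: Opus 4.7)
The plan is to prove the identity $\omega_{n,m,k}=b_{n+m,m,k}$ by strong induction on $n$, exploiting the fact that the $\omega$-recurrence~\eqref{rec:omega} strictly decreases the first index. The first step is to confirm that the shifted quantity $b_{n+m,m,k}$ obeys the same three-term recurrence. Applying Proposition~\ref{Definition-bnk-Proposition} at the indices $(n+m,\,m+1,\,k)$ yields
$$b_{n+m,m+1,k} = (m-k+2)\,b_{n+m,m+1,k-1} + b_{n+m,m,k} + b_{n+m-1,m+1,k},$$
and solving for $b_{n+m,m,k}$ reproduces precisely the right-hand side of~\eqref{rec:omega} under the correspondence $\omega_{n',m',k'}\leftrightarrow b_{n'+m',m',k'}$. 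So the two sequences obey the same linear recursion, and the problem reduces to matching initial data.

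Two of the three initial conditions are immediate. The equality $\omega_{-1,m,k}=0=b_{m-1,m,k}$ follows from the conventions set up in Proposition~\ref{Definition-bnk-Proposition}, and likewise $\omega_{1,0,0}=1=b_{1,0,0}$. The substantive work is in the remaining base case, namely verifying
$$b_{m,m,k} \;=\; \omega_{0,m,k} \;=\; \sum_{i=0}^k \frac{\gamma_{k-i}}{i!}\cdot \frac{2^{m-k}}{(m-k+1)!}\cdot (2m+k+i-1)!!.$$
By Proposition~\ref{prop:bnnk} the left-hand side equals $b_{m,k}$, and by Theorem~\ref{Semi-closed-formula-Ank} the right-hand side equals $\frac{2^{m-k}}{(m-k+1)!}\,a_{m,k}$, so this final initial condition is essentially the diagonal ($n=m$) case of Theorem~\ref{MainConject} itself. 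This is where the two key lemmas indicated in the outline must be deployed.

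I would approach the diagonal identity in two stages. First I would extract a reduced recurrence for $b_{m,m,k}$ (equivalently, for $b_{m,k}$) from the multivariate relation~\eqref{rec:bnmk}: setting $n=m$ gives $b_{m,m,k}=(m-k+1)\,b_{m,m,k-1}+b_{m,m-1,k}$ since $b_{m-1,m,k}=0$, and the off-diagonal term $b_{m,m-1,k}$ must then be unwound by iterating~\eqref{rec:bnmk} along the anti-diagonal, using the second vanishing condition $b_{n,k-1,k}=0$ to terminate the sum. The target is a recurrence structurally analogous to~\eqref{eq:recank2} for $a_{m,k}$. The second stage would then verify that the explicit closed form on the right satisfies that same recurrence, by the very double-factorial telescoping that drives the proof of Theorem~\ref{Semi-closed-formula-Ank}; the case $k=0$ is direct from~\eqref{rela:oddcat}, giving $\omega_{0,m,0}=\mathrm{Cat}(m)=b_{m,0}$.

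The main obstacle I anticipate is the first of these two stages. The multivariate recursion~\eqref{rec:bnmk} couples all three directions, and the off-diagonal values $b_{m,m-j,k}$ with $j\geq 1$ do not admit any obvious standalone closed form. Controlling them will likely require an auxiliary transform — perhaps a finite-difference-type sum weighted by coefficients reminiscent of the numbers $\gamma_j$ — that collapses the off-diagonal contributions into a clean dependence on $b_{m,m,k-1}$ and on diagonal values with smaller $m$. Once such a univariate recurrence is isolated, matching it against the candidate closed form should proceed by a routine verification parallel to the proof of Theorem~\ref{Semi-closed-formula-Ank}.
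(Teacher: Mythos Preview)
Your induction on $n$ is set up correctly, and you are right that $b_{n+m,m,k}$ satisfies recurrence~\eqref{rec:omega}. The problem is that your base case $n=0$, namely $\omega_{0,m,k}=b_{m,m,k}=b_{m,k}$, is not merely a ``diagonal case'' of Theorem~\ref{MainConject}: it \emph{is} Theorem~\ref{MainConject} in full (by Theorem~\ref{Semi-closed-formula-Ank} and~\eqref{omega-n=0mk}, the identity $\omega_{0,m,k}=b_{m,k}$ for all $m\ge k\ge 0$ is exactly~\eqref{JingLiu}). Since Theorem~\ref{Theorem=omega=bnmk} is the device the paper uses to \emph{prove} Theorem~\ref{MainConject}, your reduction reverses the intended logical flow and deposits the entire content inside the base case; the induction step you describe is then essentially content-free.

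The paper sidesteps this by matching a \emph{different} boundary. It rearranges~\eqref{rec:omega} into the form~\eqref{til:omega}, which coincides with the recursion~\eqref{rec:bnmk} satisfied by $\tilde\omega_{n,m,k}:=b_{n+m,m,k}$, and then verifies the initial condition $\omega_{n,k-1,k}=0$ (matching $b_{n+k-1,k-1,k}=0$) rather than the strip $n=0$. That is where the two key lemmas actually operate: Lemma~\ref{Theorem-Omega-k-1} iterates the $\omega$-recurrence all the way down to level $0$, expressing $\omega_{n,k-1,k}$ as an explicit double sum in the $\gamma_j$, and Lemma~\ref{Lemma-Sum-Free-i} shows via a constant-term computation that the inner sums collapse to a factor independent of~$i$, after which the vanishing is immediate from~\eqref{Recurren-Ck0}. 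Your alternative plan for the $n=0$ case --- unwinding~\eqref{rec:bnmk} along anti-diagonals to isolate a univariate recurrence for $b_{m,m,k}$ --- would, if it succeeded, be a direct proof of Theorem~\ref{MainConject} bypassing $\omega$ entirely; the obstacle you flag is real, and no mechanism for taming the off-diagonal terms $b_{m,m-j,k}$ is apparent.
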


\subsection{Two key lemmas}\label{subsection-Key-lemmas}

The most technical part of the proof of Theorem~\ref{Theorem=omega=bnmk} lies in showing that  $\omega_{n,k-1,k}=0$.
For this purpose, we establish two auxiliary lemmas, the first of which is an explicit recursion for $\omega_{n,k-1,k}$.

\begin{lem}\label{Theorem-Omega-k-1}
For any $1\leq s\leq n$, the sequence $\omega_{n,k-1,k}$ satisfies the following recursion:
\begin{align*}
\omega_{n,k-1,k}=\sum_{p=1}^{\left\lfloor \frac{s+1}{2}\right\rfloor} \sum_{q=1}^{s+2-2p} \alpha_s(p,q)\cdot \omega_{n-s-1,k+s-p,k+1-q}
+\sum_{p=1}^{\left\lceil \frac{s+1}{2}\right\rceil} \sum_{q=1}^{s+3-2p} -\alpha_{s+1}(p,q)\cdot \omega_{n-s,k+s-p,k+1-q},
\end{align*}
where
\begin{align}\label{Equation-Alpha}
\alpha_{s}(p,q)=\frac{(-1)^{q-p+1}\cdot (s-1+q-p)!}{(s-q-2p+2)!\cdot (q-1)!\cdot 2^{q-1}\cdot (p-1)!}.
\end{align}
\end{lem}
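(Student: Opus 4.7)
The plan is to prove the lemma by induction on $s$, treating the defining recurrence \eqref{rec:omega} as a rewriting rule.

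For the base case $s=1$, setting $m = k-1$ in \eqref{rec:omega} yields $\omega_{n,k-1,k} = \omega_{n-1,k,k} - \omega_{n-1,k,k-1} - \omega_{n-2,k,k}$, and a direct evaluation of the right-hand side of the claimed formula at $s=1$, using the values $\alpha_1(1,1) = \alpha_2(1,1) = -1$ and $\alpha_2(1,2) = 1$ read off from \eqref{Equation-Alpha}, reproduces this identity.

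For the inductive step, assume the claim holds at level $s$, and apply \eqref{rec:omega} to each $\omega_{n-s, k+s-p, k+1-q}$ appearing in the second sum; the coefficient ``$m - k + 2$'' of \eqref{rec:omega} specializes here to $s - p + q + 1$, so
$$\omega_{n-s, k+s-p, k+1-q} = \omega_{n-s-1, k+s-p+1, k+1-q} - (s - p + q + 1)\,\omega_{n-s-1, k+s-p+1, k-q} - \omega_{n-s-2, k+s-p+1, k+1-q}.$$
After substitution, every resulting term lives at super-index $n - s - 1$ or $n - s - 2$. The $n - s - 2$ pieces immediately reproduce the first sum at level $s + 1$, the ranges matching because $\lceil (s+1)/2 \rceil = \lfloor (s+2)/2 \rfloor$. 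The $n - s - 1$ pieces combine with the existing first sum at level $s$, after the reindexing $P = p + 1$, to form the second sum at level $s + 1$. Matching coefficients then reduces the entire inductive step to the single algebraic identity
$$\alpha_s(P-1, Q) - \alpha_{s+1}(P, Q) + (s - P + Q)\,\alpha_{s+1}(P, Q-1) = -\alpha_{s+2}(P, Q),$$
valid for $1 \leq P \leq \lceil (s+2)/2 \rceil$ and $1 \leq Q \leq s + 4 - 2P$, under the natural convention that $\alpha_s(p, q) = 0$ whenever a factorial in its definition has a negative argument (so in particular $\alpha_s(0, Q) = 0$ and $\alpha_{s+1}(P, 0) = 0$). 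This identity is a routine manipulation of factorials starting from the closed form \eqref{Equation-Alpha}.

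The main technical obstacle is not the factorial identity itself but the careful bookkeeping of boundary contributions: the three sums contributing to a given coefficient at level $s + 1$ have slightly different summation ranges, and one must verify that the discrepancies at the edges $P \in \{1, \lceil (s+2)/2 \rceil\}$ and $Q \in \{1, s + 4 - 2P\}$ are absorbed cleanly by the vanishing convention on $\alpha_s$ without producing extra correction terms. Once this is handled uniformly, the three-term $\alpha$-identity closes the induction.
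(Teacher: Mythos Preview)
Your proposal is correct and follows essentially the same inductive strategy as the paper: both verify the base case $s=1$ directly, then expand the $\omega$-terms in the second sum via~\eqref{rec:omega} and reduce the inductive step to the three-term identity $\alpha_s(P-1,Q)-\alpha_{s+1}(P,Q)+(s-P+Q)\,\alpha_{s+1}(P,Q-1)=-\alpha_{s+2}(P,Q)$ (the paper's version with $s$ shifted by one). The only cosmetic difference is that the paper carries out the boundary bookkeeping you flag at the end by an explicit case analysis on $p$, $q$, and the parity of $s$, rather than invoking a uniform vanishing convention on $\alpha$.
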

\begin{proof}
The proof is by induction on $s$.
If $s=1$, then we have
\begin{align*}
\omega_{n,k-1,k}&=\alpha_1(1,1)\cdot \omega_{n-2,k,k}-\alpha_2(1,1)\omega_{n-1,k,k}-\alpha_2(1,2)\omega_{n-1,k,k-1}
\\ &=-\omega_{n-2,k,k}+\omega_{n-1,k,k}-\omega_{n-1,k,k-1}.
\end{align*}
This is consistent with the recurrence for $\omega_{n,k-1,k}$ in~\eqref{rec:omega}, hence the lemma holds for $s=1$.
Assuming the formula to hold for $s-1$, we will prove it for $s$.

By the recurrence for $\omega_{n,m,k}$ in~\eqref{rec:omega}, we now have
\begin{align*}
\omega_{n,k-1,k}&=\sum_{p=1}^{\left\lfloor \frac{s}{2}\right\rfloor} \sum_{q=1}^{s+1-2p} \alpha_{s-1}(p,q)\cdot \omega_{n-s,k+s-1-p,k+1-q}
+\sum_{p=1}^{\left\lceil \frac{s}{2}\right\rceil} \sum_{q=1}^{s+2-2p} -\alpha_{s}(p,q)\cdot \omega_{n-s+1,k+s-1-p,k+1-q}
\\ &=\sum_{p=1}^{\left\lfloor \frac{s}{2}\right\rfloor} \sum_{q=1}^{s+1-2p} \alpha_{s-1}(p,q)\cdot \omega_{n-s,k+s-1-p,k+1-q}
+ \sum_{p=1}^{\left\lceil \frac{s}{2}\right\rceil} \sum_{q=1}^{s+2-2p} -\alpha_{s}(p,q)\cdot \omega_{n-s,k+s-p,k+1-q}
\\ & +\sum_{p=1}^{\left\lceil \frac{s}{2}\right\rceil} \sum_{q=1}^{s+2-2p} -\alpha_{s}(p,q)\cdot \big( -(s+q-p)\omega_{n-s, k+s-p, k-q}-\omega_{n-s-1, k+s-p, k+1-q} \big)
\\ &= \sum_{p=1}^{\left\lfloor \frac{s+1}{2}\right\rfloor} \sum_{q=1}^{s+2-2p} \alpha_{s}(p,q)\cdot \omega_{n-s-1, k+s-p, k+1-q}
+\sum_{p=1}^{\left\lfloor \frac{s}{2}\right\rfloor} \sum_{q=1}^{s+1-2p} \alpha_{s-1}(p,q)\cdot \omega_{n-s,k+s-1-p,k+1-q}
\\ &+\sum_{p=1}^{\left\lceil \frac{s}{2}\right\rceil} \sum_{q=1}^{s+2-2p} -\alpha_{s}(p,q)\cdot \omega_{n-s,k+s-p,k+1-q}
+\sum_{p=1}^{\left\lceil \frac{s}{2}\right\rceil} \sum_{q=1}^{s+2-2p} \alpha_{s}(p,q)\cdot (s+q-p)\omega_{n-s, k+s-p, k-q},
\end{align*}
where we use the fact $\lceil\frac{s}{2}\rceil=\lfloor\frac{s+1}{2}\rfloor$ in the last equality.
It suffices to prove the following identity:
\begin{align}
& \sum_{p=2}^{\left\lfloor \frac{s}{2}\right\rfloor+1} \sum_{q=1}^{s+3-2p} \alpha_{s-1}(p-1,q)\cdot \omega_{n-s,k+s-p,k+1-q}
 +\sum_{p=1}^{\left\lceil \frac{s}{2}\right\rceil} \sum_{q=1}^{s+2-2p} -\alpha_{s}(p,q)\cdot \omega_{n-s,k+s-p,k+1-q}  \nonumber
\\ &\quad +\sum_{p=1}^{\left\lceil \frac{s}{2}\right\rceil} \sum_{q=2}^{s+3-2p} \alpha_{s}(p,q-1)\cdot (s+q-p-1)\omega_{n-s, k+s-p, k+1-q}\label{Equatio-Need-omega}
\\= &\sum_{p=1}^{\left\lceil \frac{s+1}{2}\right\rceil} \sum_{q=1}^{s+3-2p} -\alpha_{s+1}(p,q)\cdot \omega_{n-s,k+s-p,k+1-q}. \nonumber
\end{align}

We begin by verifying the case $p=1$, that is,  Eq.~\eqref{Equatio-Need-omega} reduces to
\begin{align*}
&\sum_{q=1}^{s}-\alpha_s(1,q) \omega_{n-s,k+s-1,k+1-q}+\sum_{q=2}^{s+1}\alpha_s(1,q-1)(s+q-2)\omega_{n-s,k+s-1,k+1-q}
\\ =&\sum_{q=1}^{s+1} -\alpha_{s+1}(1,q)\omega_{n-s,k+s-1,k+1-q}.
\end{align*}
This is divided into the following three subcases:
\begin{enumerate}
\item When $q=1$, the equality $-\alpha_s(1,1)=-\alpha_{s+1}(1,1)$ follows from~\eqref{Equation-Alpha}.
\item When $q=s+1$, the identity $\alpha_s(1,s)\cdot (2s-1)=-\alpha_{s+1}(1,s+1)$ holds by~\eqref{Equation-Alpha}.
\item When $2\leq q\leq s$, the equality $-\alpha_s(1,q)+\alpha_s(1,q-1)\cdot (s+q-2)=-\alpha_{s+1}(1,q)$ still follows from~\eqref{Equation-Alpha}.
\end{enumerate}

We now turn to the case $p>1$. This requires us to distinguish the parity of $s$. We begin by assuming $s$ is even.
When $p$ attains its maximum value of $\frac{s+2}{2}$, $q$ must be $1$, and we obtain $\alpha_{s-1}(\frac{s}{2},1)=-\alpha_{s+1}(\frac{s+2}{2},1)$ by~\eqref{Equation-Alpha}.
When $2\leq p\leq \frac{s}{2}$, we need only prove
\begin{align}
& \sum_{q=1}^{s+3-2p} \alpha_{s-1}(p-1,q)\cdot \omega_{n-s,k+s-p,k+1-q}
 +\sum_{q=1}^{s+2-2p} -\alpha_{s}(p,q)\cdot \omega_{n-s,k+s-p,k+1-q} \nonumber
\\ &\quad +\sum_{q=2}^{s+3-2p} \alpha_{s}(p,q-1)\cdot (s+q-p-1)\omega_{n-s, k+s-p, k+1-q} \label{Equation-Alph-p=2tos}
\\= & \sum_{q=1}^{s+3-2p} -\alpha_{s+1}(p,q)\cdot \omega_{n-s,k+s-p,k+1-q}.\nonumber
\end{align}
This is also divided into the following three subcases:
\begin{enumerate}
\item When $q=1$, the equality $\alpha_{s-1}(p-1,1)-\alpha_s(p,1)=-\alpha_{s+1}(p,1)$ follows from~\eqref{Equation-Alpha}.

\item When $q=s+3-2p$, the identity $$\alpha_{s-1}(p-1,s+3-2p) +\alpha_{s}(p,s+2-2p)\cdot (2s-3p+2)=-\alpha_{s+1}(p,s+3-2p)$$ holds by~\eqref{Equation-Alpha}.

\item When $2\leq q\leq s+2-2p$, the equality $$\alpha_{s-1}(p-1,q)-\alpha_s(p,q)+ \alpha_s(p,q-1)\cdot (s+q-p-1)=-\alpha_{s+1}(p,q)$$ still follows from~\eqref{Equation-Alpha}.
\end{enumerate}
Next, we consider the case of odd $s$.
When $p$ attains its maximum value of $\frac{s+1}{2}$, we need only prove
\begin{align*}
& \sum_{q=1}^{2} \alpha_{s-1}(p-1,q)\cdot \omega_{n-s,k+s-p,k+1-q}
 +\sum_{q=1}^{1} -\alpha_{s}(p,q)\cdot \omega_{n-s,k+s-p,k+1-q}
\\ &\quad +\sum_{q=2}^{2} \alpha_{s}(p,q-1)\cdot (s+q-p-1)\omega_{n-s, k+s-p, k+1-q}
\\= & \sum_{q=1}^{2} -\alpha_{s+1}(p,q)\cdot \omega_{n-s,k+s-p,k+1-q}.
\end{align*}
At this time, when $q=1$, the identity $\alpha_{s-1}(\frac{s+1}{2}-1,1)-\alpha_s(\frac{s+1}{2},1)=-\alpha_{s+1}(\frac{s+1}{2},1)$ follows from~\eqref{Equation-Alpha}; when $q=2$, the identity $\alpha_{s-1}(\frac{s+1}{2}-1,2)+\alpha_s(\frac{s+1}{2},1)\cdot \frac{s+1}{2}=-\alpha_{s+1}(\frac{s+1}{2},2)$.
Similarly, when $2\leq p\leq \frac{s-1}{2}$, we need prove~\eqref{Equation-Alph-p=2tos}.
The proof follows the same argument as for even $s$ and is omitted.

We have now proven that~\eqref{Equatio-Need-omega} holds, and that the formula of $\omega_{n,k-1,k}$ also holds for $s$.
This completes the proof by induction.
\end{proof}

Next, we need the following two constant term facts to prove our second lemma.

Let $\mathrm{CT}_x F(x)$ denote the constant term of the formal Laurent series expansion of $F(x)$.

\begin{Fact}\label{Fact-one}
Let $i$ be a nonnegative integer. If $k$ is a nonnegative integer, possibly less than $i$, or if $k$ is a half integer, then
$$\mathop{\mathrm{CT}}_x \frac{(1+x)^k}{x^i}=
\binom{k}{i}.$$
\end{Fact}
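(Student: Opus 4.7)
The plan is to verify the identity by direct expansion of $(1+x)^k$ as a formal power series in $x$ and to read off the coefficient of $x^i$. The convention fixed in the paper is that $\mathop{\mathrm{CT}}_x$ refers to the formal Laurent series expansion centered at $x=0$, so the correct expansion of $(1+x)^k$ is the unique one that specializes to the usual polynomial when $k$ is a nonnegative integer, namely
$$(1+x)^k = \sum_{j \geq 0} \binom{k}{j}\, x^j, \qquad \binom{k}{j} := \frac{k(k-1)\cdots(k-j+1)}{j!},$$
with the understanding that $\binom{k}{0}=1$.

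First I would observe that this generalized binomial series makes sense for any $k$ in a characteristic-zero field, and split into the two regimes declared in the statement. If $k$ is a nonnegative integer, then for $j>k$ the numerator of $\binom{k}{j}$ contains the factor $(k-k)=0$, so the series terminates at $j=k$ and the expression on the right is a polynomial of degree $k$. If instead $k$ is a half-integer, no such vanishing factor is ever produced, and the series is an honest infinite formal power series; in either case the expansion is well-defined as a formal (Laurent) series in $x$.

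Next I would divide through by $x^i$ to obtain the formal Laurent series
$$\frac{(1+x)^k}{x^i} = \sum_{j \geq 0} \binom{k}{j}\, x^{j-i},$$
and simply extract the summand with $j=i$, which yields $\binom{k}{i}$ as claimed. The edge case where $k$ is a nonnegative integer with $k<i$ is automatic: then $\binom{k}{i}=0$ by the standard convention, and this is consistent with the observation that the Laurent polynomial on the left only contains monomials $x^{j-i}$ with $0\leq j\leq k<i$, every one of which has strictly negative exponent.

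There is essentially no obstacle beyond bookkeeping; the only subtlety worth flagging is that one must use the power series expansion of $(1+x)^k$ at $x=0$, as opposed to any other branch (for example, expanding around $x=\infty$ would produce a different formal Laurent series and a different constant term). Once that convention is pinned down, the fact reduces to reading off a single coefficient of a (generalized) binomial expansion.
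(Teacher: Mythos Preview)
Your proposal is correct. The paper actually states this result as a \emph{Fact} without proof, so there is no argument to compare against; your direct verification via the generalized binomial expansion $(1+x)^k=\sum_{j\ge 0}\binom{k}{j}x^j$ and extraction of the $x^i$-coefficient is exactly the standard justification one would supply.
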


\begin{Fact}\label{Fact-two}
If $F(x)$ only contains nonnegative power of $x$, then
$$\CT_x  F(x)\frac{1}{1-u/x} =\CT_x F(x) \sum_{n\ge 0} \left(\frac{u}{x}\right)^n = F(u),$$
where $u$ is independent of $x$.
\end{Fact}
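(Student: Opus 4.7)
The plan is to expand both factors as formal series in $x$ and then interchange the summation with the constant-term operator. The first equality in the statement is purely the convention that, inside $\CT_x$, the rational expression $\frac{1}{1-u/x}$ denotes its geometric expansion $\sum_{n\ge 0}(u/x)^n$, which is a well-defined formal Laurent series in $x$ involving only nonpositive powers of $x$.

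For the second equality, I would write $F(x)=\sum_{m\ge 0}f_m x^m$ (permitted by hypothesis) and multiply termwise to obtain
$$
F(x)\sum_{n\ge 0}\left(\frac{u}{x}\right)^n=\sum_{m,n\ge 0}f_m\,u^n\,x^{m-n}.
$$
The coefficient of $x^{0}$ on the right comes precisely from the diagonal $m=n$, yielding
$$
\CT_x F(x)\sum_{n\ge 0}\left(\frac{u}{x}\right)^n=\sum_{n\ge 0}f_n\,u^n=F(u),
$$
which is the claim.

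There is no genuine obstacle here; the only point to check is that the formal manipulations are legitimate. Because $F(x)$ contributes only nonnegative powers of $x$ and $\sum_{n\ge 0}(u/x)^n$ contributes only nonpositive powers, for each integer $k$ the coefficient of $x^k$ in the product is a well-defined formal power series in $u$ (specifically $\sum_{n\ge 0}f_{n+k}u^n$ when $k\ge 0$), so extracting the $x^{0}$-coefficient is unambiguous. Thus the identity reduces to a one-line computation once the formal-series framework is set up.
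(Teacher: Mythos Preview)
Your proof is correct and matches the paper's own treatment: the paper merely remarks that the fact ``follows easily from the linearity of the constant term operator'' and refers to \cite{Xin15} for rigorous definitions, which is exactly what your termwise expansion and diagonal extraction make explicit.
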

This fact follows easily from the linearity of the constant term operator. For rigorous definitions, see \cite{Xin15}.

\begin{lem}\label{Lemma-Sum-Free-i}
For any $n, k,i\in \mathbb{N}$ with $i\leq k$, we have
\begin{align*}
&\sum_{p=1}^{\left\lceil \frac{n+1}{2}\right\rceil} \sum_{q=1}^{\min\{n+3-2p,k+1-i\}} \frac{(-1)^{q-p}\cdot 2^{n-p}}{(n+3-2p-q)!\cdot (p-1)!} \cdot \binom{k-i}{q-1}\cdot \frac{(2n+4k-2p-2q+1-i)!!}{(4k-3-i)!!}
\\=&2^{n-1}\binom{n+3k-2}{n}.
\end{align*}
\end{lem}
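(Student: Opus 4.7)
My plan is to recast the double sum as a coefficient in a simple algebraic generating function, and then apply Newton's binomial series. First, substituting $p'=p-1$, $j=q-1$, and setting $\ell=n-2p'-j$, the constraint $q\le n+3-2p$ becomes $\ell\ge 0$, while $q\le k+1-i$ is enforced automatically by the vanishing of $\binom{k-i}{j}$ for $j>k-i$. Writing $a:=4k-3-i$, the left-hand side of the identity, divided by $2^{n-1}$, becomes
\begin{align*}
\sum_{\substack{p',j,\ell\ge 0\\ 2p'+j+\ell=n}}\frac{(-1)^{j+p'}}{2^{p'}\,p'!\,\ell!}\binom{k-i}{j}\cdot\frac{(2(p'+\ell)+a)!!}{a!!}.
\end{align*}
This is precisely $[x^n]\,F(x)\,G(x)$, where $F(x)=(1-x)^{k-i}$ encodes the $j$-sum and $G(x)$ encodes the $(p',\ell)$-sum.

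The crucial observation is that the double factorial factor depends on $p'$ and $\ell$ only through the sum $e:=p'+\ell$. Grouping terms by $e$ and using $\sum_{p'=0}^{e}\binom{e}{p'}(-x/2)^{p'}=(1-x/2)^e$, combined with the elementary identity $(2e+a)!!/(a!!\,e!)=2^e\binom{a/2+e}{e}$, yields
\begin{align*}
G(x)=\sum_{e\ge 0}\binom{a/2+e}{e}(2x-x^2)^e=\bigl(1-(2x-x^2)\bigr)^{-a/2-1}=(1-x)^{-(4k-1-i)},
\end{align*}
where the middle step invokes the (possibly half-integer) generalized binomial series, and the last step uses $1-(2x-x^2)=(1-x)^2$. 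Multiplying by $F(x)$ and extracting the coefficient of $x^n$ gives
\begin{align*}
[x^n]\,(1-x)^{(k-i)-(4k-1-i)}=[x^n]\,(1-x)^{1-3k}=\binom{n+3k-2}{n}
\end{align*}
by Newton's binomial formula, which is the desired identity.

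The argument can equivalently be phrased in the paper's constant-term language: one writes $\binom{k-i}{j}=\mathop{\mathrm{CT}}_y(1+y)^{k-i}/y^j$ and $\binom{a/2+e}{e}=\mathop{\mathrm{CT}}_z(1+z)^{a/2+e}/z^e$ via Fact~\ref{Fact-one}, swaps the finite sums past $\mathop{\mathrm{CT}}_{y,z}$, and collapses the resulting geometric-type series via Fact~\ref{Fact-two}. The main obstacle is precisely that the double factorial factor mixes the indices $p'$ and $\ell$ through their sum, blocking a direct product factorization of $G(x)$; the remedy is to first change variables from $(p',\ell)$ to $(p',e)$, sum out $p'$ via the binomial theorem, and only then invoke the generalized binomial series, after which the perfect square $1-(2x-x^2)=(1-x)^2$ produces the dramatic simplification. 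A minor subtlety is that $a/2=(4k-3-i)/2$ is a half-integer when $i$ is even, which is handled by the half-integer case already supplied in Fact~\ref{Fact-one}.
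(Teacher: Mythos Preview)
Your argument is correct and substantially simpler than the paper's. The substitutions $p'=p-1$, $j=q-1$, $\ell=n-2p'-j$ are accurate (the sign $(-1)^{q-p}=(-1)^{j-p'}=(-1)^{j+p'}$ and the identification $2(p'+\ell)+a=2n+4k-2p-2q+1-i$ check out), and the upper limits match because $p'\le\lfloor n/2\rfloor$ is exactly the constraint $2p'\le n$ forced by $j,\ell\ge0$. The key regrouping by $e=p'+\ell$ is the decisive idea: it collapses the double factorial into $2^e\binom{a/2+e}{e}$, the inner $p'$-sum into $(1-x/2)^e$, and the perfect square $1-(2x-x^2)=(1-x)^2$ then cancels the dependence on $i$ immediately.

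By contrast, the paper encodes all three binomials via a triple constant term in variables $x,y,z$, sums two geometric series, eliminates $x$ and then $y$ (the latter via the quadratic formula and a choice of root), and finally performs a contour-integral change of variables $z=4t+4t^2$ to reach the answer. Your route bypasses the multivariate constant-term machinery and the residue substitution entirely; what makes it work is spotting early that the awkward factor $(2n+4k-2p-2q+1-i)!!$ depends on the summation indices only through $p'+\ell$, so a single ordinary generating function in one variable suffices. The paper's method is more mechanical (and illustrates a general toolkit), whereas yours exploits the specific structure of this sum to give a shorter, self-contained proof.
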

\begin{proof}
We denote the left-hand side of the equation by $\mathrm{LHS}$.
We obtain
\begin{align*}
\mathrm{LHS}&=\sum_{p=1}^{\left\lceil \frac{n+1}{2}\right\rceil} \sum_{q=1}^{\min\{n+3-2p,k+1-i\}}
(-1)^{p-q}2^{n-p}\binom{n+2-p-q}{p-1}\binom{k-i}{q-1}\cdot
\\ &\quad\quad\quad \cdot\frac{(4k-i-1)(4k-i+1)\cdots (4k-i+1+2n-2p-2q)}{(n+2-p-q)!}
\\ & =\sum_{p=1}^{\left\lceil \frac{n+1}{2}\right\rceil} \sum_{q=1}^{\min\{n+3-2p,k+1-i\}}(-1)^{p-q}2^{2n-2p-q+2}\binom{n+2-p-q}{p-1}\binom{k-i}{q-1}\cdot
\\ &\quad\quad\quad \cdot \binom{2k+n-p-q+\frac{1-i}{2}}{n+2-p-q}.
\end{align*}
By Fact \ref{Fact-one}, our problem thus reduces to computing the constant term in the following equation:
\begin{align*}
\mathrm{LHS}&=\mathop{\mathrm{CT}}_{x,y,z}\sum_{p=1}^{\left\lceil \frac{n+1}{2}\right\rceil} \sum_{q=1}^{\min\{n+3-2p,k+1-i\}}
(-1)^{p-q}2^{2n-2p-q+2} \frac{(1+x)^{k-i}}{x^{q-1}}\cdot \frac{(1+y)^{n+2-p-q}}{y^{p-1}}\cdot
\\ &\quad\quad\quad\quad\quad\quad\quad\quad\quad\quad\quad\quad  \cdot\frac{(1+z)^{2k+n-p-q+\frac{1-i}{2}}}{z^{n+2-p-q}}.
\end{align*}
At this time, extending the upper bounds of the summations is equivalent to adding some zeros by Fact \ref{Fact-one}.
Consequently, we obtain
\begin{align*}
\mathrm{LHS}&=\mathop{\mathrm{CT}}_{x,y,z}\sum_{p=1}^{\infty} \sum_{q=1}^{\infty}
(-1)^{p-q}2^{2n-2p-q+2} \frac{(1+x)^{k-i}(1+y)^{n+2-p-q}(1+z)^{2k+n-p-q+\frac{1-i}{2}}}{x^{q-1}y^{p-1}z^{n+2-p-q}}.
\end{align*}
Applying the geometric series formula yields
\begin{align*}
\mathrm{LHS}&=\mathop{\mathrm{CT}}_{x,y,z}\sum_{p=1}^{\infty} \frac{(-1)^{p+1}2^{2n-2p+1}(1+y)^{n+1-p}}{(1+x)^{i-k}y^{p-1}z^{n+1-p}(1+z)^{p-2k-n+\frac{i+1}{2}}}\cdot \frac{1}{1+\frac{z}{2x(1+y)(1+z)}}
\\ &=\mathop{\mathrm{CT}}_{x,y,z}\frac{2^{2n-1}(1+y)^n}{(1+x)^{i-k}z^n(1+z)^{-2k-n+\frac{i+3}{2}}}
\cdot\frac{1}{1+\frac{z}{4y(1+z)(1+y)}}\cdot \frac{1}{1+\frac{z}{2x(1+y)(1+z)}}.
\end{align*}
We first eliminate $x$ using Fact \ref{Fact-two}, where $u=\frac{-z}{2(1+y)(1+z)}$. This gives
\begin{align*}
\mathrm{LHS}&=\mathop{\mathrm{CT}}_{y,z}\frac{2^{2n-1}(1+y)^n}{z^n(1+z)^{-2k-n+\frac{i+3}{2}}\left( 1+\frac{z}{4y(1+z)(1+y)}\right) \left(1-\frac{z}{2(1+y)(1+z)}\right)^{i-k}}.
\end{align*}
Next, we eliminate $y$. Only the denominator factor $1+\frac{z}{4y(1+y)(1+z)}$ may results in negative powers of $y$. By the quadratic formula, we can write
$$\frac{1}{1+\frac{z}{4y(1+y)(1+z)}} = \frac{y(1+y)}{(y-Y_1)(y-Y_2)} = \frac{(1+y)}{(1-Y_1/y)(y-Y_2)},$$
where
$$Y_1=\frac{1-\sqrt{1+z}}{2\sqrt{1+z}} =O(z) \quad \text{and}\quad Y_2=\frac{-1-\sqrt{1+z}}{2\sqrt{1+z}}=-1+O(z).$$
Then applying Fact \ref{Fact-two} with $u=Y_1$ gives
\begin{align*}
\mathrm{LHS}=\mathop{\mathrm{CT}}_{z} 2^{n-2}z^{-n}(1+z)^{n+2k-\frac{i+3}{2}}\frac{(1+\sqrt{1+z})^{n+1}}{(\sqrt{1+z})^n}\cdot \left( \frac{1+z+\sqrt{1+z}}{(1+z)(1+\sqrt{1+z})} \right)^{k-i}.
\end{align*}
Note that some of the above computations were performed via \texttt{Maple} \cite{Maple}.
According to
$$\frac{1+z+\sqrt{1+z}}{(1+z)(1+\sqrt{1+z})}=\frac{\sqrt{1+z}}{1+z}=\frac{1}{\sqrt{1+z}},$$
we obtain
\begin{align*}
\mathrm{LHS} &=\mathop{\mathrm{CT}}_{z} 2^{n-2}z^{-n}(1+z)^{n+2k-\frac{i+3}{2}}\frac{(1+\sqrt{1+z})^{n+1}}{(\sqrt{1+z})^n}\cdot \left( \frac{1}{\sqrt{1+z}} \right)^{k-i}
\\ &=2^{n-2}\cdot\mathop{\mathrm{CT}}_{z} \frac{(1+z)^{\frac{3k+n-3}{2}} (1+\sqrt{1+z})^{n+1}}{z^n}.
\end{align*}
It is noteworthy that the above expression is independent of $i$.
We assume the reader is familiar with the knowledge of residues.
Informally, the residue can be viewed as the coefficient of the $z^{-1}$ term in a Laurent series expansion.
Therefore, we have
\begin{align*}
\mathrm{LHS} &=2^{n-2}\cdot\mathop{\mathrm{Res}}_{z=0} \frac{(1+z)^{\frac{3k+n-3}{2}} (1+\sqrt{1+z})^{n+1}}{z^{n+1}}
\\ &= 2^{n-2}\cdot \frac{1}{2\pi i}\oint_{|z|=\varepsilon} \frac{(1+z)^{\frac{3k+n-3}{2}} (1+\sqrt{1+z})^{n+1}}{z^{n+1}} dz,
\end{align*}
where the integration path is taken near $z=0$, with $\varepsilon$ sufficiently small.
Let $z=4t+4t^2$. Then we have $dz=4(1+2t)dt$ and
\begin{align*}
\mathrm{LHS} &= 2^{n-2}\cdot \frac{1}{2\pi i}\oint_{|t|=\varepsilon} \frac{(1+2t)^{3k+n-3}(2(1+t)^{n+1})}{(4t(1+t))^{n+1}} \cdot 4(1+2t) dt
\\&=2^{n-2}\cdot \frac{1}{2\pi i}\oint_{|t|=\varepsilon} \frac{(1+2t)^{3k+n-2}}{2^{n-1}t^{n+1}} dt
\\ &= 2^{-1}\cdot\mathop{\mathrm{Res}}_{t=0} \frac{(1+2t)^{3k+n-2}}{t^{n+1}}
=2^{-1}\cdot\mathop{\mathrm{CT}}_{t} \frac{(1+2t)^{3k+n-2}}{t^{n}}
\\ &= 2^{n-1}\binom{3k+n-2}{n}.
\end{align*}
This completes the proof.
\end{proof}

\subsection{Finishing the proof of Theorem~\ref{MainConject}}
\label{subsection-transform-bnkm}

In this subsection, we  prove Theorem~\ref{Theorem=omega=bnmk}, which completes the proof of Theorem~\ref{MainConject}.

\begin{lem}\label{Theorem-omega=nk-1=0}
For any $n,k\in \mathbb{N}$, we have $\omega_{n,k-1,k}=0$.
\end{lem}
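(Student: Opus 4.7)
The plan is to evaluate $\omega_{n,k-1,k}$ by specializing Lemma~\ref{Theorem-Omega-k-1} at the extremal value $s=n$, substituting the closed form \eqref{omega-n=0mk} for $\omega_{0,\cdot,\cdot}$, collapsing the resulting double sum via Lemma~\ref{Lemma-Sum-Free-i}, and finally invoking the defining recursion \eqref{Recurren-Ck0} for the constants $\gamma_k$ to make the remaining single sum vanish.

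Concretely, setting $s=n$ in Lemma~\ref{Theorem-Omega-k-1}, the first double sum is killed by the initial condition $\omega_{-1,\cdot,\cdot}=0$, leaving
$$\omega_{n,k-1,k} \;=\; -\sum_{p=1}^{\lceil(n+1)/2\rceil}\sum_{q=1}^{n+3-2p}\alpha_{n+1}(p,q)\,\omega_{0,\,k+n-p,\,k+1-q}.$$
I would then expand each $\omega_{0,\,k+n-p,\,k+1-q}$ via \eqref{omega-n=0mk}, reindexing the inner sum by $\ell=(k+1-q)-i$ so that $\gamma_\ell$ appears, and swap the order of summation to pull $\gamma_\ell$ outside. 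The coefficient of $\gamma_\ell$ is a double sum over $(p,q)$; after simplifying $\alpha_{n+1}(p,q)\cdot 2^{n-p+q-1}/(n-p+q)!$ down to $(-1)^{q-p+1}2^{n-p}/\bigl((n+3-2p-q)!(q-1)!(p-1)!\bigr)$ and using the identity $\tfrac{1}{(q-1)!(k+1-q-\ell)!}=\binom{k-\ell}{q-1}/(k-\ell)!$, the summand matches exactly the summand of Lemma~\ref{Lemma-Sum-Free-i} with the index $i$ there replaced by $\ell$ here, up to an overall factor of $-(4k-3-\ell)!!/(k-\ell)!$. Applying Lemma~\ref{Lemma-Sum-Free-i} therefore yields
$$\omega_{n,k-1,k} \;=\; 2^{n-1}\binom{n+3k-2}{n}\sum_{\ell=0}^{k}\frac{\gamma_\ell\,(4k-3-\ell)!!}{(k-\ell)!}.$$

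To conclude, I substitute $i=k-\ell$ in the remaining sum, turning it into $\sum_{i=0}^{k}\frac{\gamma_{k-i}}{i!}(3k+i-3)!!$. But this is exactly $\tilde{a}_{k-1,k}$, shown to vanish in the proof of Theorem~\ref{Semi-closed-formula-Ank}; indeed, its vanishing is the very identity that determines $\gamma_k$ via \eqref{Recurren-Ck0}. Hence $\omega_{n,k-1,k}=0$, as claimed.

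The main obstacle is the middle reduction: carefully combining the factors of $\alpha_{n+1}(p,q)$, the $2$-powers, the ordinary factorials, and the double factorial $(4k+2n-2p-2q+1-\ell)!!$ into the precise shape required by Lemma~\ref{Lemma-Sum-Free-i}, and verifying that the effective upper limit $q\le\min\{n+3-2p,\,k+1-\ell\}$ produced by swapping summations coincides with the lemma's range of summation. Once this bookkeeping is cleared, both the constant-term identity from Lemma~\ref{Lemma-Sum-Free-i} and the defining recursion of the $\gamma_k$'s slot in cleanly to give the desired vanishing.
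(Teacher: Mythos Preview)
Your proposal is correct and follows essentially the same approach as the paper's own proof: specialize Lemma~\ref{Theorem-Omega-k-1} at $s=n$ so the first double sum dies via $\omega_{-1,\cdot,\cdot}=0$, expand $\omega_{0,\cdot,\cdot}$ by~\eqref{omega-n=0mk}, swap summations to isolate $\gamma_\ell$, collapse the $(p,q)$-sum via Lemma~\ref{Lemma-Sum-Free-i}, and finish with~\eqref{Recurren-Ck0}. The only quibble is a stray minus sign in your ``overall factor'' description that is inconsistent with your displayed final formula, but since the sum vanishes either way this is immaterial.
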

\begin{proof}
Taking $s=n$ in Lemma \ref{Theorem-Omega-k-1} gives
\begin{align}\label{Equation-S=N=0}
\omega_{n,k-1,k}=\sum_{p=1}^{\left\lceil \frac{n+1}{2}\right\rceil} \sum_{q=1}^{n+3-2p} -\alpha_{n+1}(p,q)\cdot \omega_{0,k+n-p,k+1-q}.
\end{align}
By Eqs.~\eqref{Equation-Alpha} and \eqref{omega-n=0mk}, we have
\begin{small}
\begin{align*}
& \omega_{n,k-1,k}
\\=&\sum_{p=1}^{\left\lceil \frac{n+1}{2}\right\rceil} \sum_{q=1}^{n+3-2p} \frac{(-1)^{q-p}\cdot 2^{n-p}}{(n+3-2p-q)!\cdot (q-1)!\cdot (p-1)!} \sum_{i=0}^{k+1-q} \frac{\gamma_{k+1-q-i}}{i!}\cdot (2n+3k-2p-q+i)!!
\\=& \sum_{p=1}^{\left\lceil \frac{n+1}{2}\right\rceil} \sum_{q=1}^{n+3-2p} \frac{(-1)^{q-p}\cdot 2^{n-p}}{(n+3-2p-q)!\cdot (q-1)!\cdot (p-1)!} \sum_{i=0}^{k+1-q} \frac{\gamma_{i}}{(k+1-q-i)!}\cdot (2n+4k-2p-2q+1-i)!!,
\end{align*}
\end{small}
where $\gamma_{j}$ satisfy recurrence relation \eqref{Recurren-Ck0} and initial condition $\gamma_{0}=1$.
By interchanging the order of the summation above, we obtain
\begin{align*}
\omega_{n,k-1,k}& = \sum_{i=0}^{k} \sum_{p=1}^{\left\lceil \frac{n+1}{2}\right\rceil} \sum_{q=1}^{\min\{n+3-2p, k+1-i\}} \frac{(-1)^{q-p}\cdot 2^{n-p}}{(n+3-2p-q)!\cdot (q-1)!\cdot (p-1)!}\cdot
\\ &\quad\quad\quad\quad\quad\quad \cdot\frac{\gamma_{i}}{(k+1-q-i)!}\cdot (2n+4k-2p-2q+1-i)!!
\\ & =\sum_{i=0}^{k} \sum_{p=1}^{\left\lceil \frac{n+1}{2}\right\rceil} \sum_{q=1}^{\min\{n+3-2p, k+1-i\}}\frac{(-1)^{q-p}\cdot 2^{n-p}}{(n+3-2p-q)!\cdot (q-1)!\cdot (p-1)!}\cdot
\\ & \quad \cdot \frac{(k-i)!}{(k+1-q-i)!} \cdot\frac{ (2n+4k-2p-2q+1-i)!!}{(4k-3-i)!!}\cdot\frac{\gamma_{i}}{(k-i)!}\cdot (4k-3-i)!!,
\end{align*}
By Lemma \ref{Lemma-Sum-Free-i}, the above expression reduces to
\begin{align*}
\omega_{n,k-1,k} &=\sum_{i=0}^{k} 2^{n-1}\binom{n+3k-2}{n}\cdot\frac{\gamma_{i}}{(k-i)!}\cdot (4k-3-i)!!,\\
&=2^{n-1}\binom{n+3k-2}{n}\sum_{i=0}^{k}\frac{\gamma_{i}}{(k-i)!}\cdot (4k-3-i)!!\\
&=0,
\end{align*}
where the last equality follows from~\eqref{Recurren-Ck0}.
This completes the proof of the lemma.
\end{proof}

We are now ready for the proof of Theorem~\ref{Theorem=omega=bnmk}.
\begin{proof}[{\bf Proof of Theorem~\ref{Theorem=omega=bnmk}}]
For convenience, we set $\tilde{\omega}_{n,m,k}=b_{n+m,m,k}$. By the recurrence~\eqref{rec:bnmk} of $b_{n,m,k}$, we have
$$
b_{n+m,m,k}=(m-k+1)b_{n+m,m,k-1} +b_{n+m,m-1,k}+ b_{n+m-1,m,k}.
$$
Thus, we have the following recurrence relation for
\begin{equation}\label{til:omega}
\tilde{\omega}_{n,m,k}= (m-k+1)\tilde{\omega}_{n,m,k-1}+\tilde{\omega}_{n+1,m-1,k}+\tilde{\omega}_{n-1,m,k}.
\end{equation}
The  initial conditions for $b_{n,m,k}$ in Proposition \ref{Definition-bnk-Proposition} gives
\begin{equation}\label{ini:omega1}
\tilde{\omega}_{0,0,0}= \tilde{\omega}_{1,0,0}= \tilde{\omega}_{0,1,0}=\tilde{\omega}_{0,1,1}=1,\,\,\tilde\omega_{-1,m,k}=b_{m-1,m,k}=0
\end{equation}
and
\begin{equation}\label{ini:omega2}
\tilde\omega_{n,k-1,k}=b_{n+k-1,k-1,k}=0.
\end{equation}
Note that recurrence~\eqref{til:omega} for $\tilde{\omega}_{n,m,k}$ together with the initial conditions in~\eqref{ini:omega1} and~\eqref{ini:omega2}  determines $\tilde{\omega}_{n,m,k}$ uniquely.
To finish the proof, it remains to show that $\omega_{n,m,k}$ satisfies the same recurrence relation and initial conditions as those of $\tilde\omega_{n,m,k}$.

Setting $n\leftarrow n+1$ in~\eqref{rec:omega}  yields
$$\omega_{n+1,m,k}=\omega_{n,m+1,k}-(m-k+2)\omega_{n,m+1,k-1}-\omega_{n-1,m+1,k},$$
which can be recast as
$$
\omega_{n,m+1,k}=(m-k+2)\omega_{n,m+1,k-1}+\omega_{n+1,m,k}+\omega_{n-1,m+1,k}.
$$
Subsequently, setting $m\leftarrow m-1$ leads to
$$
\omega_{n,m,k}=(m-k+1)\omega_{n,m,k-1}+\omega_{n+1,m-1,k}+\omega_{n-1,m,k}.
$$
This shows that $\omega_{n,m,k}$ satisfies the same recurrence relation as that of $\tilde\omega_{n,m,k}$ in~\eqref{til:omega}.
 Lemma~\ref{Theorem-omega=nk-1=0} ensures that $\omega_{n,k-1,k}=0=\tilde\omega_{n,k-1,k}$. Finally, verifying the other initial conditions is straightforward:
 $$
 \omega_{0,0,0}= \omega_{1,0,0}=\omega_{0,1,0}=\omega_{0,1,1}=1,\,\,\omega_{-1,m,k}=0.
 $$
 This completes the proof of the theorem.
\end{proof}

Combining Proposition~\ref{prop:bnnk} with Theorem~\ref{Theorem=omega=bnmk} yields $b_{n,k}=b_{n,n,k}=\omega_{0,n,k}$, which results in the following expression for $b_{n,k}$ in view of~\eqref{omega-n=0mk}.

\begin{thm}\label{Conjecture-Bnk--semi-closed}
Let $0\leq k\leq n$. We have
\begin{align*}
b_{n,k}=\sum_{i=0}^k \frac{\gamma_{k-i}}{i!}\cdot \frac{2^{n-k}}{(n-k+1)!}\cdot (2n+k+i-1)!!,
\end{align*}
where $\gamma_{j}$ satisfy recurrence relation \eqref{Recurren-Ck0} and initial condition $\gamma_{0}=1$.
\end{thm}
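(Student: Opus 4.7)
The plan is to derive Theorem~\ref{Conjecture-Bnk--semi-closed} as an immediate corollary of the machinery already assembled, with essentially no new computation required. Specifically, I would chain together three ingredients in sequence: the identification $b_{n,k}=b_{n,n,k}$ from Proposition~\ref{prop:bnnk}, the identification $\omega_{n,m,k}=b_{n+m,m,k}$ from Theorem~\ref{Theorem=omega=bnmk}, and the defining formula~\eqref{omega-n=0mk} for the initial slice $\omega_{0,m,k}$.

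First I would write
\begin{equation*}
b_{n,k} \;=\; b_{n,n,k}
\end{equation*}
by Proposition~\ref{prop:bnnk} (the flip-plus-complement bijection noted there). Next, specializing Theorem~\ref{Theorem=omega=bnmk} at the point $(n,m,k)\mapsto(0,n,k)$ yields
\begin{equation*}
\omega_{0,n,k} \;=\; b_{0+n,\,n,\,k} \;=\; b_{n,n,k}.
\end{equation*}
Combining these two equalities gives $b_{n,k}=\omega_{0,n,k}$. The third and last step is simply to substitute $m=n$ into the defining expression~\eqref{omega-n=0mk} of $\omega_{0,m,k}$, which reads off the claimed closed form
\begin{equation*}
b_{n,k} \;=\; \omega_{0,n,k} \;=\; \sum_{i=0}^{k} \frac{\gamma_{k-i}}{i!}\cdot \frac{2^{n-k}}{(n-k+1)!}\cdot (2n+k+i-1)!!,
\end{equation*}
with $\gamma_j$ determined by~\eqref{Recurren-Ck0} and $\gamma_0=1$, exactly as asserted.

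There is no real obstacle at this stage, since all the hard work has been concentrated in the earlier two subsections: the semi-closed-form expression for $a_{n,k}$ in Theorem~\ref{Semi-closed-formula-Ank}, the recurrence for the extension $b_{n,m,k}$ in Proposition~\ref{Definition-bnk-Proposition}, the tailored variant $\omega_{n,m,k}$ with its initial slice~\eqref{omega-n=0mk}, and most critically the vanishing $\omega_{n,k-1,k}=0$ (Lemma~\ref{Theorem-omega=nk-1=0}) that underpins Theorem~\ref{Theorem=omega=bnmk}. Once those are in place, the present theorem is a one-line consequence, so the only thing to verify carefully in writing the proof is that the specialization $m=n$ is legal (it is, since the definition~\eqref{omega-n=0mk} applies whenever $0\le k\le m$) and that the indices line up correctly in the substitution. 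As a consistency check, one may further note that at $k=0$ the formula collapses via $\gamma_0=1$ and $(2n-1)!!\cdot 2^n/(n+1)!=\mathrm{Cat}(n)$ to recover~\eqref{eq:bn0}, matching~\eqref{rela:oddcat}.
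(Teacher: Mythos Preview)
Your proposal is correct and follows exactly the same route as the paper: combine Proposition~\ref{prop:bnnk} and Theorem~\ref{Theorem=omega=bnmk} to get $b_{n,k}=b_{n,n,k}=\omega_{0,n,k}$, then read off the formula from~\eqref{omega-n=0mk}. The paper states this in a single sentence just before the theorem, so your write-up is essentially a slightly more detailed version of the same argument.
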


\begin{proof}[{\bf Proof of Theorem~\ref{MainConject}}]
The proof now follows from Theorem~\ref{Conjecture-Bnk--semi-closed} and Theorem~\ref{Semi-closed-formula-Ank}.
\end{proof}

\begin{cor}\label{Canject-bnk}
The $b_{n,k}$ satisfies the following recurrence relation:
$$b_{n,k}=\frac{n-k+2}{2}\cdot b_{n,k-1}+\frac{2(2n+k-1)}{n-k+1}\cdot b_{n-1,k}.$$
\end{cor}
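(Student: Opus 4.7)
The plan is to derive this recurrence for $b_{n,k}$ by translating the known recurrence~\eqref{rec:ank} for $a_{n,k}$ through the relationship established in Theorem~\ref{MainConject}. Since~\eqref{rec:ank} reads $a_{n,k}=a_{n,k-1}+(2n+k-1)\cdot a_{n-1,k}$, and Theorem~\ref{MainConject} gives us the explicit conversion $a_{n,k}=\frac{(n-k+1)!}{2^{n-k}}\,b_{n,k}$, the entire argument amounts to a substitution followed by simplification of factorial and power-of-$2$ ratios. No further combinatorial work is needed.

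Concretely, I would first write out the three relevant instances of Theorem~\ref{MainConject}:
\begin{align*}
a_{n,k}=\frac{(n-k+1)!}{2^{n-k}}\,b_{n,k},\qquad a_{n,k-1}=\frac{(n-k+2)!}{2^{n-k+1}}\,b_{n,k-1},\qquad a_{n-1,k}=\frac{(n-k)!}{2^{n-k-1}}\,b_{n-1,k}.
\end{align*}
Substituting these into~\eqref{rec:ank} and then multiplying through by $2^{n-k}/(n-k+1)!$ yields
\begin{align*}
b_{n,k}=\frac{(n-k+2)!}{2\,(n-k+1)!}\,b_{n,k-1}+(2n+k-1)\cdot\frac{2\,(n-k)!}{(n-k+1)!}\,b_{n-1,k},
\end{align*}
which simplifies, using $(n-k+2)!/(n-k+1)!=n-k+2$ and $(n-k)!/(n-k+1)!=1/(n-k+1)$, to precisely the claimed recurrence.

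There is no serious obstacle; the only thing to be careful about is the range of validity, since the denominator $n-k+1$ would be problematic at $k=n+1$, and $b_{n,k-1}$ is only meaningful for $k\geq 1$. Both are automatically fine in the intended range $1\leq k\leq n$, and the boundary case $k=0$ just corresponds to the standard identity~\eqref{rela:oddcat} for odd double factorials and Catalan numbers. Thus the corollary is obtained with a two-line calculation once Theorem~\ref{MainConject} is in hand, and can be stated as an immediate consequence.
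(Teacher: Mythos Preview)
Your proposal is correct and follows exactly the same approach as the paper, which simply states that the corollary follows from Theorem~\ref{MainConject} and the recurrence~\eqref{rec:ank}. Your explicit substitution and simplification merely spell out the one-line derivation the paper leaves to the reader.
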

\begin{proof}
This follows from Theorem~\ref{MainConject} and the recurrence~\eqref{rec:ank}.
\end{proof}

\subsection{Applications to the enumeration of tree-child networks}
\label{apps:thm1.1}

We now present several corollaries concerning the enumeration of tree-child networks.
Pons and Batle~\cite[Propositions 8-12]{PB} derived the following results, Corollaries \ref{PonsBatlePropo8}, \ref{PonsBatlePropo9}, \ref{PonsBatlePropo10}, \ref{PonsBatlePropo11}, and \ref{PonsBatlePropo12} under the assumption that Conjecture \ref{Pons-Batle} holds.
We can now remove this assumption.

\begin{cor}\label{PonsBatlePropo8}
The cardinalities $|\mathcal{TC}_{n,k}|$ satisfy
$$(n-k)|\mathcal{TC}_{n,k}|=(n+1-k)(n-k) |\mathcal{TC}_{n,k-1}|+n(2n+k-3) |\mathcal{TC}_{n-1,k}|,$$
where the initial values are $|\mathcal{TC}_{1,0}|=1$ and $|\mathcal{TC}_{i,-1}|=|\mathcal{TC}_{i,i}|=0$ for any $i$.
\end{cor}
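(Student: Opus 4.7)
The plan is to obtain the recurrence as a direct corollary of the formula
$$|\mathcal{TC}_{n,k}|=\frac{n!}{(n-k)!}\cdot a_{n-1,k},$$
which was originally Conjecture~\ref{conj:PB} and is now established via Theorem~\ref{MainConject} together with~\cite[Theorem~3.4]{CFLWY}. Combined with the elementary recurrence~\eqref{rec:ank} for $a_{n,k}$, everything should reduce to a short factorial manipulation. There is no deep combinatorial obstacle once the main theorem is in hand.

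First I would shift $n\mapsto n-1$ in~\eqref{rec:ank} to obtain
$$a_{n-1,k}=a_{n-1,k-1}+(2n+k-3)\cdot a_{n-2,k}.$$
Multiplying both sides by $\frac{n!}{(n-k)!}$ and invoking the formula above gives
$$|\mathcal{TC}_{n,k}|=\frac{n!}{(n-k)!}\,a_{n-1,k-1}+\frac{n!}{(n-k)!}\,(2n+k-3)\,a_{n-2,k}.$$
The next step is to identify each of the two terms on the right with a multiple of $|\mathcal{TC}_{n,k-1}|$ or $|\mathcal{TC}_{n-1,k}|$. Using $\frac{n!}{(n-k)!}=(n-k+1)\cdot\frac{n!}{(n-k+1)!}$, the first term equals $(n-k+1)\,|\mathcal{TC}_{n,k-1}|$; using $\frac{n!}{(n-k)!}=\frac{n}{n-k}\cdot\frac{(n-1)!}{(n-1-k)!}$, the second term equals $\frac{n(2n+k-3)}{n-k}\,|\mathcal{TC}_{n-1,k}|$. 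Clearing the denominator $n-k$ then yields exactly
$$(n-k)\,|\mathcal{TC}_{n,k}|=(n-k+1)(n-k)\,|\mathcal{TC}_{n,k-1}|+n(2n+k-3)\,|\mathcal{TC}_{n-1,k}|,$$
as required.

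For the initial conditions, I would remark that $|\mathcal{TC}_{1,0}|=1$ corresponds to the unique tree-child network with one leaf and no reticulations; the convention $|\mathcal{TC}_{i,-1}|=0$ is vacuous; and $|\mathcal{TC}_{i,i}|=0$ is forced by the formula, since the value $a_{i-1,i}$ is not admissible under the constraint $n\geq k\geq 0$ used to define $a_{n,k}$ and is set to zero by the standard convention (consistent with Table~\ref{tabb-Ank}). The hardest step is really just the factorial bookkeeping, and since all ingredients are at most one line long, I would not expect to encounter any substantive obstacle beyond verifying that the index shifts are consistent with the range $n>k\geq 0$ in which both sides of the target recurrence are defined.
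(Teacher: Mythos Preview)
Your proposal is correct and follows exactly the approach indicated in the paper, which simply states that the result follows by combining~\eqref{eq:conPB} with~\eqref{rec:ank}. Your write-up just makes the one-line factorial bookkeeping explicit.
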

\begin{proof}
By combining Eqs.~\eqref{eq:conPB} and~\eqref{rec:ank}, we obtain this result.
\end{proof}

\begin{cor}\label{PonsBatlePropo9}
The sequence $|\mathcal{TC}_{n,k}|$ satisfies
$$(n-k)!\cdot |\mathcal{TC}_{n,k}|=\sum_{i=0}^k n(2n+i-3)(n-1-i)!\cdot |\mathcal{TC}_{n-1,i}|,$$
where the initial values are $|\mathcal{TC}_{1,0}|=1$ and $|\mathcal{TC}_{i,-1}|=|\mathcal{TC}_{i,i}|=0$ for any $i$.
\end{cor}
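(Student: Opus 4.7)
The plan is to reduce the claimed identity to a statement purely about $a_{n,k}$ by invoking the formula
$$|\mathcal{TC}_{n,k}| = \frac{n!}{(n-k)!}\cdot a_{n-1,k}$$
established in~\eqref{eq:conPB}. Substituting this into both sides of the target identity, the factor $(n-k)!$ on the left cancels the denominator, producing $n!\cdot a_{n-1,k}$, while on the right the factors $(n-1-i)!$ cancel the denominator $(n-1-i)!$ inside $|\mathcal{TC}_{n-1,i}|$, producing $n!\sum_{i=0}^{k}(2n+i-3)\cdot a_{n-2,i}$. Cancelling the common $n!$, the identity is equivalent to
$$a_{n-1,k} \;=\; \sum_{i=0}^{k}(2n+i-3)\cdot a_{n-2,i},$$
or, after the index shift $n\mapsto n+1$, to the cleaner identity
$$a_{n,k} \;=\; \sum_{i=0}^{k}(2n+i-1)\cdot a_{n-1,i}. \qquad (\ast)$$

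I would then prove $(\ast)$ by induction on $k$ using the basic recursion~\eqref{rec:ank}. The base case $k=0$ reads $a_{n,0}=(2n-1)\cdot a_{n-1,0}$, which is immediate from $a_{n,0}=(2n-1)!!$ in~\eqref{eq:an0}. For the inductive step, the recurrence~\eqref{rec:ank} gives
$$a_{n,k}=a_{n,k-1}+(2n+k-1)\cdot a_{n-1,k},$$
and the inductive hypothesis expresses $a_{n,k-1}$ as $\sum_{i=0}^{k-1}(2n+i-1)\cdot a_{n-1,i}$; appending the single term corresponding to $i=k$ yields $(\ast)$.

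Finally, I would verify consistency with the boundary data. The formula gives $|\mathcal{TC}_{1,0}|=\frac{1!}{1!}a_{0,0}=1$, matching the stated initial value, and the conventions $|\mathcal{TC}_{i,-1}|=|\mathcal{TC}_{i,i}|=0$ correspond to the natural vanishing of $a_{n,k}$ outside the range $0\le k\le n$ used throughout the paper; in particular any boundary term entering the sum on the right-hand side is zero. There is no real obstacle in this argument: the whole content is the reduction via Theorem~\ref{MainConject} and the one-line telescoping implicit in~\eqref{rec:ank}. The only care required is the bookkeeping of the $n\mapsto n+1$ index shift, which is what allows the recursion \eqref{rec:ank} to produce exactly the coefficients $(2n+i-3)$ appearing in the statement of Corollary~\ref{PonsBatlePropo9}.
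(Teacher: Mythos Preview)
Your proposal is correct and follows essentially the same approach as the paper. The paper first records the telescoped identity $a_{n+1,k}=\sum_{i=0}^{k}(2n+i+1)a_{n,i}$ obtained from~\eqref{rec:ank} and then invokes~\eqref{eq:conPB}, whereas you substitute~\eqref{eq:conPB} first and then prove the equivalent identity $(\ast)$ by the same telescoping induction; the mathematical content is identical.
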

\begin{proof}
By recurrence~\eqref{rec:ank}, we obtain
$$a_{n+1,k}=\sum_{i=0}^k (2n+i+1)a_{n,i}.$$
The result then follows immediately from the relation above combined with Eq.~\eqref{eq:conPB}.
\end{proof}

\begin{cor}\label{PonsBatlePropo10}
The sequence  $|\mathcal{TC}_{n,k}|$ satisfies
$$|\mathcal{TC}_{k+m+1,k}|=\sum_{\ell=0}^m (\ell+2)\left[\prod_{i=\ell+1}^m \left( 1+ \frac{k}{i+1}\right)(2i+3k-1) \right] |\mathcal{TC}_{k+\ell+1,k-1}|$$
for $k\geq 1$.
\end{cor}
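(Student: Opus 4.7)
The plan is to reduce the identity to one purely about the numbers $a_{n,k}$ via the now-proved formula $|\mathcal{TC}_{n,k}|=\tfrac{n!}{(n-k)!}\,a_{n-1,k}$ from Conjecture~\ref{Pons-Batle}, and then recognize what remains as an iterated form of the recursion~\eqref{rec:ank}.

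First, I would apply Conjecture~\ref{Pons-Batle} to both sides of the claimed identity: the left side becomes $\tfrac{(k+m+1)!}{(m+1)!}\,a_{k+m,k}$, and each summand on the right contains the factor $|\mathcal{TC}_{k+\ell+1,k-1}|=\tfrac{(k+\ell+1)!}{(\ell+2)!}\,a_{k+\ell,k-1}$. The factor $(\ell+2)\prod_{i=\ell+1}^m(1+\tfrac{k}{i+1})$ telescopes, since
\begin{equation*}
(\ell+2)\prod_{i=\ell+1}^m\frac{i+1+k}{i+1}=(\ell+2)\cdot\frac{(k+m+1)!\,(\ell+1)!}{(m+1)!\,(k+\ell+1)!}=\frac{(k+m+1)!\,(\ell+2)!}{(m+1)!\,(k+\ell+1)!},
\end{equation*}
which combines cleanly with the factorial prefactors above. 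After dividing both sides by $\tfrac{(k+m+1)!}{(m+1)!}$, the statement becomes the equivalent identity
\begin{equation*}
a_{k+m,k}=\sum_{\ell=0}^m\Bigl(\prod_{i=\ell+1}^m(2i+3k-1)\Bigr)\,a_{k+\ell,k-1}.
\end{equation*}

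Second, I would recognize this as exactly the iterated recurrence~\eqref{eq:recank2} specialized at $n=k+m$. Setting $j=k+\ell$ turns the outer sum $\sum_{\ell=0}^m$ into $\sum_{j=k}^n$, while the shift $i\mapsto i+k$ inside the product rewrites $\prod_{i=\ell+1}^m(2i+3k-1)$ as $\prod_{i'=j+1}^n(2i'+k-1)$. On the other hand, rewriting~\eqref{eq:recank2} with $j'=j+1$ in its inner product and absorbing $a_{n,k-1}$ as the $j=n$ term (empty product) gives
\begin{equation*}
a_{n,k}=\sum_{j=k}^{n}\Bigl(\prod_{i=j+1}^{n}(2i+k-1)\Bigr)\,a_{j,k-1},
\end{equation*}
which matches the reduced identity verbatim after substituting $n=k+m$.

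The whole argument is bookkeeping rather than real combinatorics: there is no genuine obstacle beyond carefully tracking two reindexings (the shift $j=k+\ell$ outside and $i\mapsto i+k$ inside the product) and confirming the telescoping simplification of $\prod_{i=\ell+1}^m(1+\tfrac{k}{i+1})$. The initial conditions $|\mathcal{TC}_{1,0}|=1$ and $|\mathcal{TC}_{i,-1}|=|\mathcal{TC}_{i,i}|=0$ follow automatically since both sides reduce to the corresponding values $a_{0,0}=1$ and $a_{i-1,-1}=a_{i-1,i}=0$ under the same translation.
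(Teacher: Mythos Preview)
Your proof is correct and follows essentially the same approach as the paper: both reduce the stated identity to one about $a_{n,k}$ via the (now-proved) formula $|\mathcal{TC}_{n,k}|=\tfrac{n!}{(n-k)!}a_{n-1,k}$, and then identify the result with the iterated recurrence~\eqref{eq:recank2}. Your write-up is simply more explicit about the telescoping of $\prod_{i=\ell+1}^m(1+\tfrac{k}{i+1})$ and the two reindexings, whereas the paper states the key identity $a_{n,k+1}=\sum_{i=k+1}^n\prod_{j=i+1}^n(2j+k)\,a_{i,k}$ and asserts the rest follows.
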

\begin{proof}
By Eq.~\eqref{eq:recank2}, we obtain
$$a_{n,k+1}=\sum_{i=k+1}^n \prod_{j=i+1}^n (2j+k)a_{i,k}.$$
This result follows from the above expression and Eq.~\eqref{eq:conPB}.
\end{proof}

\begin{cor}\label{PonsBatlePropo11}
Let $\delta_j=j!\cdot \gamma_j$ for $j\geq 0$.
The sequence $|\mathcal{TC}_{n,k}|$ satisfies
$$|\mathcal{TC}_{n,k}|=\binom{n}{k}\sum_{i=0}^k \binom{k}{i} (2n+2k-i-3)!!\cdot \delta_i,$$
where the coefficients $\delta_i$ are determined by the recursion
$$\delta_i=-\sum_{j=1}^i \binom{i}{j} \frac{(2i+j-3)!!}{(3i-3)!!} \delta_{i-j},\quad \text{and}\quad \delta_0=1.$$
\end{cor}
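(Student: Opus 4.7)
The plan is a direct computation combining the now-established identity $|\mathcal{TC}_{n,k}|=\frac{n!}{(n-k)!}\,a_{n-1,k}$ from Conjecture~\ref{Pons-Batle} with the semi-closed-form expression of Theorem~\ref{Semi-closed-formula-Ank}. My strategy is to substitute the formula for $a_{n-1,k}$, reindex the summation, and then recognise the resulting rational coefficients as products of binomial coefficients with the rescaled constants $\delta_j:=j!\,\gamma_j$.

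Concretely, Theorem~\ref{Semi-closed-formula-Ank} with $n$ replaced by $n-1$ gives
$$
a_{n-1,k}=\sum_{i=0}^{k}\frac{\gamma_{k-i}}{i!}\,(2n+k+i-3)!!=\sum_{j=0}^{k}\frac{\gamma_{j}}{(k-j)!}\,(2n+2k-j-3)!!,
$$
where the second equality uses the substitution $j=k-i$. Multiplying through by $n!/(n-k)!$ yields
$$
|\mathcal{TC}_{n,k}|=\sum_{j=0}^{k}\frac{n!}{(n-k)!(k-j)!}\,\gamma_{j}\,(2n+2k-j-3)!!.
$$
The key algebraic step is then the identity
$$
\frac{n!}{(n-k)!(k-j)!}\,\gamma_{j}=\frac{n!}{k!(n-k)!}\cdot\frac{k!}{j!(k-j)!}\cdot j!\,\gamma_{j}=\binom{n}{k}\binom{k}{j}\delta_{j},
$$
which converts the sum into precisely the claimed closed form.

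For the recursion, I would rewrite~\eqref{Recurren-Ck0} in the form $(3k-3)!!\,\gamma_{k}=-\sum_{i=1}^{k}\frac{\gamma_{k-i}}{i!}(3k+i-3)!!$, multiply both sides by $k!$, and substitute $\gamma_{\ell}=\delta_{\ell}/\ell!$ throughout. Collecting the combinatorial factor $k!/(i!(k-i)!)=\binom{k}{i}$ produces the stated recurrence for $\delta_k$, with initial value $\delta_0=\gamma_0=1$ inherited from Theorem~\ref{Semi-closed-formula-Ank}. The boundary conditions $|\mathcal{TC}_{1,0}|=1$ and $|\mathcal{TC}_{i,-1}|=|\mathcal{TC}_{i,i}|=0$ are automatic from the formula~\eqref{eq:conPB} together with the known boundary behaviour of $a_{n,k}$. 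There is no genuine obstacle in this corollary: once Theorems~\ref{MainConject} and~\ref{Semi-closed-formula-Ank} are in hand, the entire argument amounts to formal bookkeeping, and the only subtlety is keeping the reindexing $j=k-i$ straight.
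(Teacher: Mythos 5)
Your derivation of the main identity is correct and is exactly what the paper intends: its proof of this corollary is a one-line appeal to Theorem~\ref{Semi-closed-formula-Ank} and Eq.~\eqref{eq:conPB}, and your reindexing $j=k-i$ together with the factorization $\frac{n!}{(n-k)!(k-j)!}\gamma_j=\binom{n}{k}\binom{k}{j}\delta_j$ is precisely the routine bookkeeping the authors omit.

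However, your closing claim --- that multiplying \eqref{Recurren-Ck0} by $k!$ and substituting $\gamma_\ell=\delta_\ell/\ell!$ ``produces the stated recurrence'' --- is not true of the recurrence as printed. That procedure yields
\[
\delta_i=-\sum_{j=1}^{i}\binom{i}{j}\,\frac{(3i+j-3)!!}{(3i-3)!!}\,\delta_{i-j},
\]
with $(3i+j-3)!!$ in the numerator, whereas the corollary prints $(2i+j-3)!!$. The two agree for $i\le 2$ but part company at $i=3$: from \eqref{Recurren-Ck0} one computes $\gamma_3=17/48$, hence $\delta_3=17/8$, which is what the $(3i+j-3)!!$ recursion gives and what the displayed formula needs (it returns $|\mathcal{TC}_{4,3}|=2544=24\cdot a_{3,3}$), while the printed $(2i+j-3)!!$ recursion gives $\delta_3=-11/48$. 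So the recursion in the statement is inconsistent with $\delta_j=j!\,\gamma_j$ --- presumably a typo, $2i$ for $3i$ --- and your assertion that the rescaling reproduces it verbatim glosses over a genuine discrepancy you would have caught by carrying out the substitution you describe. Everything else in your argument is fine.
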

\begin{proof}
This is a consequence of Theorem~\ref{Semi-closed-formula-Ank} together with Eq.~\eqref{eq:conPB}.
\end{proof}

Pons and Batle \cite[Proposition 7]{PB} determined the asymptotic behavior of $a_{n,k}$ as $n\rightarrow \infty$ with fixed $k$, and then, assuming Conjecture \ref{Pons-Batle}, derived the following result.

\begin{cor}\label{PonsBatlePropo12}
For $k$ fixed and $n\rightarrow \infty$, the sequence $|\mathcal{TC}_{n,k}|$ grows as
\begin{align*}
|\mathcal{TC}_{n,k}|&=\binom{n}{k}\frac{\sqrt{2}}{e^n}(2n)^{n+k-1}\bigg(1-\sqrt{\frac{\pi}{2}} k (2n)^{-\frac{1}{2}} +\frac{14k^2-26k+11}{12} (2n)^{-1}
\\&- \sqrt{\frac{\pi}{2}} k \frac{31k^2-93k+70}{48}(2n)^{-\frac{3}{2}} + \frac{2900k^4-14376k^3+25264k^2-19332 k +5565}{6048} (2n)^{-2}
\\& \quad +O(n^{-\frac{5}{2}})\bigg).
\end{align*}
\end{cor}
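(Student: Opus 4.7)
The plan is to use the now-unconditional identity $|\mathcal{TC}_{n,k}|=\frac{n!}{(n-k)!}\cdot a_{n-1,k}$ from Eq.~\eqref{eq:conPB} (which has just been established as a theorem thanks to Theorem~\ref{MainConject}) and feed into it the asymptotic expansion of $a_{n-1,k}$ for fixed $k$ that was derived by Pons and Batle in \cite[Proposition~7]{PB}. Since Pons and Batle already proved the unconditional asymptotic for $a_{n,k}$, their \cite[Proposition~12]{PB} was a conditional corollary waiting only for Conjecture~\ref{Pons-Batle}; our role here is simply to remove the conditionality.

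Concretely, first I would recall the Pons--Batle expansion of $a_{n-1,k}$ to order $n^{-5/2}$, writing it in the form
\begin{equation*}
a_{n-1,k}=\frac{\sqrt{2}}{e^{n-1}}(2(n-1))^{(n-1)+k-1}\Bigl(1+c_1 n^{-1/2}+c_2 n^{-1}+c_3 n^{-3/2}+c_4 n^{-2}+O(n^{-5/2})\Bigr),
\end{equation*}
where the coefficients $c_j=c_j(k)$ are the ones appearing (after the shift $n\mapsto n-1$) in \cite[Proposition~7]{PB}. Next I would expand the falling factorial $\frac{n!}{(n-k)!}=n(n-1)\cdots(n-k+1)$: since $k$ is fixed, this is a polynomial of degree $k$ in $n$, and I would write it as $\binom{n}{k}\,k!$ so that the binomial prefactor $\binom{n}{k}$ displayed in the statement emerges cleanly, with the remaining $k!$ being absorbed when $(2(n-1))^{n+k-2}$ is rewritten in terms of $(2n)^{n+k-1}$.

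The third step is the substitution-and-simplification step: I would rewrite $(2(n-1))^{n+k-2}=(2n)^{n+k-1}\cdot(2n)^{-1}\cdot(1-1/n)^{n+k-2}$ and expand $(1-1/n)^{n+k-2}=e^{-1}(1+\alpha_1 n^{-1}+\alpha_2 n^{-2}+\cdots)$ using the standard series $\log(1-1/n)^{n+k-2}=-1-\frac{2k-3}{2n}-\cdots$, so that the factor $e^{-(n-1)}$ combines with $e^{-1}$ to give $e^{-n}$ and reproduces the overall prefactor $\binom{n}{k}\sqrt{2}\,e^{-n}(2n)^{n+k-1}$ displayed in the corollary. After this reorganization the problem reduces to multiplying three formal series in $n^{-1/2}$, namely the Pons--Batle expansion, the expansion of $(1-1/n)^{n+k-2}$, and the shift from $(2(n-1))^{-1}\cdot k!\binom{n}{k}$ to $\binom{n}{k}$ alone, and collecting coefficients through $n^{-2}$.

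The only real obstacle is bookkeeping: one must carry every contribution up to the $O(n^{-5/2})$ error, match parity (odd powers of $n^{-1/2}$ all acquire a $\sqrt{\pi/2}$ factor while even powers are rational), and check that the four displayed coefficients $-\sqrt{\pi/2}\,k$, $(14k^2-26k+11)/12$, $-\sqrt{\pi/2}\,k(31k^2-93k+70)/48$, and $(2900k^4-14376k^3+25264k^2-19332k+5565)/6048$ agree with the ones Pons and Batle reported conditionally. No new analytic ideas are needed; the verification is a mechanical but attention-demanding power-series multiplication that can be safely carried out in a computer algebra system such as \texttt{Maple}, exactly as Lemma~\ref{Lemma-Sum-Free-i} was, and then cross-checked against small values of $n$ and $k$ using Table~\ref{tabb-Ank} together with Eq.~\eqref{eq:conPB}.
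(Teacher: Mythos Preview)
Your proposal is correct and follows essentially the same approach as the paper: the paper simply notes that Pons and Batle \cite[Proposition~12]{PB} already carried out this asymptotic computation assuming Conjecture~\ref{Pons-Batle}, so once the conjecture is established the result becomes unconditional. Your outline of the series manipulations is more detailed than necessary, since the actual calculation is already in \cite{PB} and need not be repeated.
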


\section{Generating functions for $b_{n,k}$ and $b_{n,m,k}$}\label{Section-Generating-function}

Introduce the generating functions for $b_{n,k}$ and $b_{n,m,k}$ as
\begin{align*}
D_k(t)=\sum_{n\geq 0} b_{n,k} t^n\quad\text{and}\quad B_k(x,t)=\sum_{k\leq m\leq n} b_{n,m,k} x^{n-m} t^m.
\end{align*}
In this section, we obtain an explicit formula for $D_k(t)$ from Theorem~\ref{Conjecture-Bnk--semi-closed} and derive a system of functional equations for $B_k(x,t)$ using the kernel method \cite{Bousquet2000}.

\subsection{An explicit formula for $D_k(t)$}\label{subsection-explicite-gf}

\begin{thm}\label{Generating-function-odd-line-bnk}
 The generating function for $b_{n,k}$ is given by
\begin{align*}
D_k(t)&=\sum_{j=0}^{\frac{k-1}{2}} \frac{\gamma_{k-2j-1}}{(2j+1)! \cdot 2^{j+\frac{3k+1}{2}}}\cdot \left(j+\frac{3k-1}{2}\right)! \cdot \binom{2j+3k-1}{j+\frac{3k-1}{2}}\cdot \frac{t^{k-1}}{(1-4t)^{j+\frac{3k}{2}}}
\\ & \quad\quad +\sum_{j=0}^{\frac{k-1}{2}}\frac{\gamma_{k-2j}\cdot  2^{j+\frac{3k-5}{2}}}{(2j)!}\cdot\left(j+\frac{3k-3}{2}\right)!\cdot \frac{t^{k-1}}{(1-4t)^{\frac{3k-1}{2}+j}}
\end{align*}
when $k$ is odd, and
\begin{align*}
D_k(t)&=\sum_{j=0}^{\frac{k}{2}} \frac{\gamma_{k-2j}}{(2j)! \cdot 2^{j+\frac{3k}{2}}}\cdot \left(j+\frac{3k-2}{2}\right)! \cdot \binom{2j+3k-2}{j+\frac{3k-2}{2}}\cdot \frac{t^{k-1}}{(1-4t)^{j+\frac{3k-1}{2}}}
\\ & \quad\quad +\sum_{j=0}^{\frac{k}{2}-1}\frac{\gamma_{k-2j-1}\cdot  2^{j+\frac{3k-4}{2}}}{(2j+1)!}\cdot\left(j+\frac{3k-2}{2}\right)!\cdot \frac{t^{k-1}}{(1-4t)^{\frac{3k}{2}+j}}
\end{align*}
when $k$ is even.
\end{thm}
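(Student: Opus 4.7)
The plan is to substitute the semi-closed formula for $b_{n,k}$ from Theorem~\ref{Conjecture-Bnk--semi-closed} into $D_k(t)=\sum_{n\ge 0}b_{n,k}t^n$ and evaluate the resulting double sum. The $\gamma$-recurrence \eqref{Recurren-Ck0} is equivalent to $\sum_{i=0}^{k}\gamma_{k-i}(3k+i-3)!!/i!=0$, so the semi-closed formula automatically yields $b_{k-1,k}=0$; this permits us to let the summation start at $n=k-1$ without changing $D_k(t)$. Substituting $n=m+k-1$ and interchanging orders of summation gives
\begin{equation*}
D_k(t)=\frac{t^{k-1}}{2}\sum_{i=0}^{k}\frac{\gamma_{k-i}}{i!}\sum_{m\geq 0}\frac{(2m+3k+i-3)!!}{m!}(2t)^m,
\end{equation*}
which already isolates the $t^{k-1}$ factor appearing in the statement.

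The main tool is a pair of closed forms for the inner $m$-sum. Starting from $\sum_{m\ge 0}(2m-1)!!\,x^m/m!=(1-2x)^{-1/2}$ and differentiating inductively produces
\begin{equation*}
\sum_{m\geq 0}\frac{(2m+c)!!}{m!}x^m=c!!\,(1-2x)^{-(c+2)/2}\qquad\text{for odd }c,
\end{equation*}
while for even $c=2\ell$ the identity $(2m+2\ell)!!=2^{m+\ell}(m+\ell)!$ combined with $\sum_{m\ge 0}\binom{m+\ell}{\ell}(2x)^m=(1-2x)^{-\ell-1}$ yields
\begin{equation*}
\sum_{m\geq 0}\frac{(2m+c)!!}{m!}x^m=2^{c/2}(c/2)!\,(1-2x)^{-(c+2)/2}\qquad\text{for even }c.
\end{equation*}
I then split the sum over $i$ according to the parity of $k+i$: the case $k+i$ even (so $c=3k+i-3$ is odd) triggers the first identity, and $k+i$ odd triggers the second. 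Setting $x=2t$ and writing $i=2j$ or $i=2j+1$ as dictated by the parity of $k$ produces exactly two families of terms of the form $t^{k-1}/(1-4t)^{(3k+i-1)/2}$, whose indexing ranges $j=0,\ldots,\lfloor k/2\rfloor$ etc.\ match the summation bounds in the theorem. The last cosmetic step, applied only to the odd-$c$ (first) sums, is the elementary identity
\begin{equation*}
(2M-1)!!=\frac{(2M)!}{2^M M!}=\frac{M!}{2^M}\binom{2M}{M}\quad\text{with}\quad M=\frac{c+1}{2},
\end{equation*}
which converts the $c!!$ prefactor into the binomials $\binom{2j+3k-1}{j+(3k-1)/2}$ (for odd $k$) and $\binom{2j+3k-2}{j+(3k-2)/2}$ (for even $k$).

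No genuinely new identity beyond the two double-factorial generating functions above is needed, so the only real obstacle is bookkeeping: one must simultaneously track the parity of $k$, the parity of $i$, the two corresponding closed forms, and the reindexing $i\mapsto j$, and then verify that the four resulting sums match the four sums in the theorem. The hypothesis $b_{k-1,k}=0$ (supplied by \eqref{Recurren-Ck0}) is used only so that the inner $m$-sum starts cleanly at $m=0$, allowing the double-factorial generating functions to apply without any boundary correction.
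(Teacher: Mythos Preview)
Your proof is correct and follows the same overall route as the paper: start from the semi-closed formula of Theorem~\ref{Conjecture-Bnk--semi-closed}, split the sum over $i$ according to parity, and evaluate each piece as a power series in $t$. Your shift $n=m+k-1$ (using $b_{k-1,k}=0$ from \eqref{Recurren-Ck0}) is essentially what the paper does implicitly by writing $T_1,T_2$ as sums over $n\ge 0$ with $1/(n-k+1)!=0$ for $n<k-1$.

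The one noteworthy difference is in how the ``odd double factorial'' piece is handled. The paper evaluates its $T_1$ by rewriting $(2n+k+2j)!!$ in terms of $\binom{2m}{m}\binom{m}{j+(3k-1)/2}$, introducing an auxiliary variable $z$ to extract the binomial coefficient, summing via $\sum_m\binom{2m}{m}u^m=(1-4u)^{-1/2}$, and then extracting the coefficient of $z^{j+(3k-1)/2}$. You bypass this entirely with the direct identity $\sum_{m\ge 0}(2m+c)!!\,x^m/m!=c!!\,(1-2x)^{-(c+2)/2}$ (valid for odd $c$ by induction from $(1-2x)^{-1/2}$), and then rewrite $c!!=(M!/2^M)\binom{2M}{M}$ with $M=(c+1)/2$ to get the binomial factor in the statement. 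This is a genuine simplification of that sub-step; the paper's coefficient-extraction argument and your generating-function identity yield the same closed form, but yours is shorter and more transparent.
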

\begin{proof}
By Theorem \ref{Conjecture-Bnk--semi-closed}, we have
\begin{align}\label{Equation-overline-bnk}
b_{n,k}=\sum_{i=0}^k \frac{\gamma_{k-i}}{i!}\cdot \frac{2^{n-k}}{(n-k+1)!}\cdot (2n+k+i-1)!!,
\end{align}
where $\gamma_{j}$ satisfy the recursion~\eqref{Recurren-Ck0} and the initial condition $\gamma_{0}=1$. We need to derive the formula for $D_k(t)$ according to the parity of $k$.

We begin with the case for $k$ being odd.
It is convenience to rewrite~\eqref{Equation-overline-bnk} as
\begin{equation}\label{eq:T12}
b_{n,k}=\sum_{j=0}^{\left\lfloor\frac{k}{2}\right\rfloor} \frac{\gamma_{k-2j}}{(2j)!}\cdot \frac{(2n+k+2j-1)!!}{(n-k+1)!} 2^{n-k}
+\sum_{j=0}^{\left\lceil\frac{k}{2} \right\rceil-1} \frac{\gamma_{k-2j-1}}{(2j+1)!}\cdot \frac{(2n+k+2j)!!}{(n-k+1)!} 2^{n-k}.
\end{equation}
When $k$ is odd and $0\leq j\leq \frac{k-1}{2}$, we first consider the following generating function:
\begin{align*}
T_1:=\sum_{n\geq 0} \frac{(2n+k+2j)!!}{(n-k+1)!}\cdot 2^{n-k} t^n =\sum_{n\geq 0} \frac{2^{-j-\frac{3k+1}{2}}\cdot (2n+k+2j+1)!}{(n+j+\frac{k+1}{2})!\cdot (n-k+1)!} t^n.
\end{align*}
Setting $m=n+j+\frac{k+1}{2}$ gives
\begin{align*}
T_1&=\sum_{m\geq j+\frac{k+1}{2}} \frac{2^{-j-\frac{3k+1}{2}}\cdot (2m)!}{m!\cdot (m-(j+\frac{3k-1}{2}))!} t^{m-j-\frac{k+1}{2}}
\\& =2^{-j-\frac{3k+1}{2}}\cdot t^{-j-\frac{k+1}{2}} \sum_{m\geq j+\frac{k+1}{2}} \binom{2m}{m} \binom{m}{j+\frac{3k-1}{2}} \cdot \left(j+\frac{3k-1}{2}\right)!\cdot t^m.
\end{align*}
By introducing  a variable $z$ and extracting the coefficient of $z^{j+\frac{3k-1}{2}}$, we have
\begin{align*}
T_1 &=2^{-j-\frac{3k+1}{2}}\cdot \left(j+\frac{3k-1}{2}\right)! \cdot t^{-j-\frac{k+1}{2}}
\sum_{m\geq j+\frac{k+1}{2}} \binom{2m}{m} [z^{j+\frac{3k-1}{2}}] (1+z)^mt^m
\\ &= 2^{-j-\frac{3k+1}{2}}\cdot \left(j+\frac{3k-1}{2}\right)! \cdot t^{-j-\frac{k+1}{2}}[z^{j+\frac{3k-1}{2}}]
\sum_{m\geq 0} \binom{2m}{m}((1+z)t)^m.
\end{align*}
Since $\sum_{s\geq 0}\binom{2s}{s}t^s=\frac{1}{\sqrt{1-4t}}$, we obtain
\begin{align*}
T_1 &= 2^{-j-\frac{3k+1}{2}}\cdot \left(j+\frac{3k-1}{2}\right)! \cdot t^{-j-\frac{k+1}{2}}[z^{j+\frac{3k-1}{2}}]\frac{1}{\sqrt{1-4(1+z)t}}
\\ &=2^{-j-\frac{3k+1}{2}}\cdot \left(j+\frac{3k-1}{2}\right)! \cdot t^{-j-\frac{k+1}{2}}[z^{j+\frac{3k-1}{2}}] \frac{1}{\sqrt{1-4t}} \cdot \frac{1}{\sqrt{1-\frac{4tz}{1-4t}}}.
\end{align*}
According to $(1-t)^{-\alpha}=1+\sum_{s\geq 1} \frac{\alpha (\alpha +1)\cdots (\alpha +s-1)}{s!}t^s$, we have
\begin{align*}
T_1 &=2^{-j-\frac{3k+1}{2}}\cdot \left(j+\frac{3k-1}{2}\right)! \cdot t^{-j-\frac{k+1}{2}}\frac{1}{\sqrt{1-4t}} [z^{j+\frac{3k-1}{2}}] \cdot \sum_{s\geq 0}\binom{2s}{s}\left(\frac{tz}{1-4t}\right)^s
\\ &=2^{-j-\frac{3k+1}{2}}\cdot \left(j+\frac{3k-1}{2}\right)! \cdot t^{-j-\frac{k+1}{2}}\frac{1}{\sqrt{1-4t}} \binom{2j+3k-1}{j+\frac{3k-1}{2}} \frac{t^{j+\frac{3k-1}{2}}}{(1-4t)^{j+\frac{3k-1}{2}}},
\end{align*}
which can be simplified to
\begin{equation}\label{eq:T1}
T_1=2^{-j-\frac{3k+1}{2}}\cdot \left(j+\frac{3k-1}{2}\right)! \cdot \binom{2j+3k-1}{j+\frac{3k-1}{2}} \frac{t^{k-1}}{(1-4t)^{j+\frac{3k}{2}}}.
\end{equation}
Secondly, we consider the following generating function:
\begin{align*}
T_2:=\sum_{n\geq 0} \frac{(2n+k+2j-1)!!}{(n-k+1)!}\cdot 2^{n-k} t^n
=\sum_{n\geq 0} \frac{2^{n+j+\frac{k-1}{2}}\cdot (n+j+\frac{k-1}{2})!\cdot 2^{n-k}}{(n-k+1)!} t^n.
\end{align*}
Setting $m=n-k+1$ and $k=2p+1$ (since $k$ is odd) yields
\begin{align*}
T_2 &=\sum_{m\geq 0} \frac{2^{m+j+3p}\cdot (m+j+3p)!\cdot 2^{m-1}}{m!} t^{m+2p}
=2^{j+3p-1}t^{2p} \sum_{m\geq 0} \frac{(3p+m+j)!}{m!} (4t)^{m}
\\ &= 2^{j+3p-1}t^{2p} \sum_{m\geq 0} \frac{(3p+j)!\cdot (3p+j+1)(3p+j+2)\cdots (3p+j+m)}{m!} (4t)^{m}
\\ &= 2^{j+3p-1}(3p+j)! t^{2p}\cdot \frac{1}{(1-4t)^{3p+j+1}}.
\end{align*}
Therefore, we have
\begin{equation}\label{eq:T2}
T_2= 2^{j+\frac{3k-5}{2}}\left(\frac{3k-3}{2}+j\right)! \frac{t^{k-1}}{(1-4t)^{\frac{3k-1}{2}+j}}.
\end{equation}
Combining~\eqref{eq:T12},~\eqref{eq:T1} and~\eqref{eq:T2} establishes the desired formula for $D_k(t)$ for odd $k$.

Similarly, we can derive the  generating function for $D_k(t)$ when $k$ is even. The details of the proof will be  omitted due to the similarity.
\end{proof}

\subsection{Functional equations for $B_k(x,t)$ via the kernel method}
\label{subsection-recurrence-gf}

We aim to derive a system of functional equations for $B_k(x,t)$ using Proposition~\ref{Definition-bnk-Proposition} and the  kernel method.
This is an independent approach for $D_k(t)$.
Let $F_0(x,t)=1$ and
$$ F_k(x,t):=\sum_{ k\leq m\leq n} (m-k+1)b_{n,m,k-1} x^{n-m}t^m$$
for $k\geq1$.
It is clear that for $k\geq 1$,
\begin{align*}
F_k(x,t)&=\sum_{0\leq k\leq m\leq n} m\cdot b_{n,m,k-1} x^{n-m}t^m +\sum_{0\leq k\leq m\leq n} (1-k)\cdot b_{n,m,k-1} x^{n-m}t^m
\\ &= t\cdot \frac{\partial B_{k-1}(x,t)}{\partial t}+(1-k)\cdot B_{k-1}(x,t).
\end{align*}
By the recurrence of $b_{n,m,k}$ in Proposition~\ref{Definition-bnk-Proposition}, we obtain
\begin{align*}
B_k(x,t)&=F_k(x,t) +\sum_{0\leq k\leq m\leq n} b_{n,m-1,k} x^{n-m}t^m + \sum_{0\leq k\leq m\leq n} b_{n-1,m,k} x^{n-m}t^m,
\\ &=F_k(x,t) + \frac{t}{x}\cdot B_k(x,t)-\frac{t}{x}\cdot D_k(t) +x\cdot B_k(x,t).
\end{align*}
Therefore, we have
\begin{equation}\label{eq:kernel}
\left(1-x-\frac{t}{x}\right)B_k(x,t)=F_k(x,t)-\frac{t}{x}\cdot D_k(t).
\end{equation}
This is a special type of functional equation with two unknowns $B_k(x,t)$ and $D_k(t)$, where
the factor $\left( 1-x-\frac{t}{x}\right)$ on the left-hand side is called the \emph{kernel} of~\eqref{eq:kernel}.
Solving the kernel $1-x-\frac{t}{x}=0$, i.e., $x^2-x+t=0$ for $x$ gives two solutions:
$$X_1(t)=\frac{1+\sqrt{1-4t}}{2}=1+O(t) \quad \text{and} \quad X_2(t)=\frac{1-\sqrt{1-4t}}{2}=t\cdot C(t).$$
According to the kernel method \cite{Bousquet2000}, we can substitute $x=X_2(t)$ to obtain
\begin{align*}
D_k(t)=\left(\frac{x}{t} \cdot F_k(x,t) \right)\Big|_{x=X_2(t)}.
\end{align*}
The above functional equations are summarized in the following theorem.
\begin{thm}\label{Genereating-function-Dkt}
For $k\geq 1$, we have the following system of functional equations
\begin{align*}
\left\{
\begin{aligned}
& D_k(t)=\left(\frac{x}{t} \cdot F_k(x,t) \right)\Big|_{x=X_2(t)}, \\
& F_k(x,t)= t\cdot \frac{\partial B_{k-1}(x,t)}{\partial t}+(1-k)\cdot B_{k-1}(x,t),\\
& B_k(x,t)=\frac{F_k(x,t)-\frac{t}{x}\cdot D_k(t)}{1-x-\frac{t}{x}},
\end{aligned}
\right.
\end{align*}
with the initial condition $F_0(x,t)=1$ and $D_0(t)=C(t)=\frac{1-\sqrt{1-4t}}{2t}$.
\end{thm}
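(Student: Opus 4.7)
The plan is to derive the system by combining the recurrence in Proposition~\ref{Definition-bnk-Proposition} with the kernel method. I would multiply the relation $b_{n,m,k}=(m-k+1)b_{n,m,k-1}+b_{n,m-1,k}+b_{n-1,m,k}$ by $x^{n-m}t^m$ and sum over the range $k\le m\le n$, $n\ge 0$. The left-hand side assembles into $B_k(x,t)$. For the first term on the right, I would write $m-k+1=m+(1-k)$ and use $\sum m\cdot b_{n,m,k-1}x^{n-m}t^m = t\cdot\partial B_{k-1}(x,t)/\partial t$ together with $\sum b_{n,m,k-1}x^{n-m}t^m = B_{k-1}(x,t)$ to obtain the stated formula for $F_k(x,t)$, which is essentially built into the definition used just above the theorem.

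Next I would treat the two index-shift terms carefully. Reindexing $m\mapsto m+1$ in $\sum b_{n,m-1,k}x^{n-m}t^m$ produces $(t/x)\sum_{k-1\le M\le n-1}b_{n,M,k}x^{n-M}t^M$; the $M=k-1$ summand vanishes because $b_{n,k-1,k}=0$, while the missing $M=n$ contributions sum over $n$ to $(t/x)\sum_n b_{n,n,k}t^n = (t/x)D_k(t)$ by Proposition~\ref{prop:bnnk}. Thus this term equals $(t/x)\bigl(B_k(x,t)-D_k(t)\bigr)$. A similar reindexing $n\mapsto n+1$ in $\sum b_{n-1,m,k}x^{n-m}t^m$ yields $x\,B_k(x,t)$, where the extraneous index $m=n+1$ contributes $b_{m-1,m,k}=0$ and so drops out. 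Assembling everything gives the key functional equation $\bigl(1-x-t/x\bigr)B_k(x,t)=F_k(x,t)-(t/x)D_k(t)$, which is the third identity of the system.

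At this stage I would invoke the kernel method. Clearing $x$ from the kernel yields the quadratic $x^2-x+t=0$, whose two roots are $X_1(t)=1+O(t)$ and $X_2(t)=tC(t)$. Only $X_2(t)$ has positive valuation in $t$, so the composition $B_k(X_2(t),t)$ is a well-defined formal power series, and substituting $x=X_2(t)$ annihilates the left-hand side of the functional equation. Solving for $D_k(t)$ yields $D_k(t)=(x/t)F_k(x,t)\big|_{x=X_2(t)}$, the first equation of the system. The initial conditions are immediate: $F_0(x,t)=1$ is adopted as the natural convention so that the $k=1$ case of the recursion reproduces $D_1(t)$, and $D_0(t)=\sum_{n\ge 0}\mathrm{Cat}(n)t^n=C(t)$ follows directly from \eqref{eq:bn0}.

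The only delicate point — and the one that most deserves attention — is the bookkeeping of boundary corrections when performing the two index shifts, since the missing and extra indices must be matched precisely against the two vanishing specializations $b_{n,k-1,k}=0$ and $b_{m-1,m,k}=0$ from Proposition~\ref{Definition-bnk-Proposition} and the diagonal identification $b_{n,n,k}=b_{n,k}$ from Proposition~\ref{prop:bnnk}. Once this matching is carried out correctly, no deeper combinatorial input is needed; the explicit form of $X_2(t)=tC(t)$ and the standard kernel-method cancellation deliver the claimed system.
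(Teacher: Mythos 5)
Your proposal is correct and follows essentially the same route as the paper: sum the recurrence of Proposition~\ref{Definition-bnk-Proposition} against $x^{n-m}t^m$, match the boundary terms of the two index shifts against $b_{n,k-1,k}=0$, $b_{m-1,m,k}=0$ and the diagonal $b_{n,n,k}=b_{n,k}$ to get $\left(1-x-\tfrac{t}{x}\right)B_k=F_k-\tfrac{t}{x}D_k$, then cancel the kernel at the small root $X_2(t)=tC(t)$. Your remark that only $X_2$ has positive $t$-valuation (so the substitution is a well-defined formal power series) is in fact slightly more careful than the paper's own justification.
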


From a computational perspective, Theorem \ref{Genereating-function-Dkt} is highly practical to compute expressions for $D_k(t)$ and $B_k(x,t)$.
In the computational implementation, we perform the substitution $t=u(1-u)$, thus ensuring that we consistently work with rational functions and avoid fractional exponents. At this time, we have
$$X_2(t)=u=\frac{1-\sqrt{1-4t}}{2},\quad B_0(x,t)=\frac{-1}{x-(1-u)}.$$
and
$$\frac{\partial B_{k-1}(x,t)}{\partial t}=\frac{1}{1-2u}\cdot \frac{\partial B_{k-1}(x,u(1-u))}{\partial u}.$$
Using  Theorem \ref{Genereating-function-Dkt}, we can easily compute  expressions for $D_k(t)$ for $k\leq 140$ by \texttt{Maple},
which agree with the results from Theorem~\ref{Generating-function-odd-line-bnk}. This gives support for the truth of Funchs and Yu's enumerative conjecture (Theorem \ref{MainConject}) in our earlier study.

\section{Recurrence for $b_{n,k}$ via linear extensions of some posets}
\label{sec:LExten}

In our initial approach to Theorem~\ref{MainConject}, we began by computing the linear extensions of some posets associated with $b_{n,k}$.
As a byproduct of this work, we provide formulas for the linear extensions of some posets.
We ultimately obtain a monstrously large recurrence relation for $b_{n,k}$ as presented below.

\begin{thm}\label{bnk-formula-Recu-three}
Let $n$ be a positive integer and $0\leq k\leq n$. Then we have
\begin{small}
\begin{equation*}
b_{n,k}=\binom{2n+k}{n}\cdot f_{n,k}-\sum_{j=1}^n \sum_{s=0}^k \sum_{m=0}^s
\frac{(-1)^{2m}\cdot(j+k-s)\cdot(n-j-m)!\cdot(k+2j-m-1)!}{2^{k-s}\cdot (j-k+s)!\cdot(k-s)!\cdot j!\cdot (s-m)!\cdot(n-j-s)!}\cdot b_{n-j,m},
\end{equation*}
where $f_{n,k}=\frac{(n-k+1)(n-k+2)\cdots (n+k)}{2^k\cdot k!}$.
\end{small}
\end{thm}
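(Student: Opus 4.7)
The plan is to realise $b_{n,k}$ as the total number of linear extensions of a family of posets $\{P_\pi\}$ indexed by the $\binom{n}{k}$ choices $\pi$ of positions for the $k$ bottom cells, and to deduce the stated identity by an inclusion–exclusion on the column cover relations joining the top and middle rows. The main term $\binom{2n+k}{n}f_{n,k}$ should count the linear extensions of the relaxed poset $\widetilde{P}_\pi$ obtained from $P_\pi$ by deleting every cover relation between a top cell and the middle cell directly below it, aggregated over $\pi$. A linear extension of $\widetilde{P}_\pi$ is specified by (i) the choice of the $n$ labels filling the top row, which are automatically sorted and contribute the factor $\binom{2n+k}{n}$, and (ii) a filling of the middle and bottom rows by the remaining $n+k$ labels, subject to the walls and to middle$>$bottom in every column containing a bottom cell. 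Summing (ii) over $\pi$ gives
$$f_{n,k}=\binom{n+k}{2k}\,(2k-1)!!,$$
via the bijection in which a filling corresponds to a choice of the $2k$ ``paired'' labels out of the $n+k$ available ($\binom{n+k}{2k}$ ways), together with a perfect matching of those $2k$ labels into column pairs (smaller label in the bottom, larger in the middle); the column positions of the paired middle labels are then fixed by their ranks inside the sorted middle row.

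To extract the correction, I would classify the bad linear extensions of $\widetilde{P}_\pi$ (those that are not extensions of $P_\pi$) by a single integer $j\ge 1$ pinpointing a canonical ``bad suffix'' of $j$ columns with the property that removing this suffix leaves a valid deformed Young tableau with walls counted by $b_{n-j,m}$; the parameters $s$ and $m$ record respectively the number of bottom cells inside the bad suffix and inside the residual sub-tableau. The coefficient
$$\frac{(j+k-s)\,(n-j-m)!\,(k+2j-m-1)!}{2^{k-s}\,(j-k+s)!\,(k-s)!\,j!\,(s-m)!\,(n-j-s)!}$$
should then emerge as the product of multinomial factors distributing the $n+k$ middle–bottom labels between the suffix and its complement, an arrangement factor $(k+2j-m-1)!$ for the $k+2j-m$ labels inside the suffix, and a wall-induced factor $2^{-(k-s)}$ coming from the $k-s$ column pairs outside the suffix. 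Rearranging the resulting identity $\binom{2n+k}{n}f_{n,k}=b_{n,k}+\sum_{j,s,m}c(j,s,m)\,b_{n-j,m}$ then yields the theorem.

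The principal obstacle is pinning down the canonical bad suffix in the previous step: an a priori inclusion–exclusion over all subsets of offending columns would produce a far more intricate sum than the triple sum over $(j,s,m)$. One needs a sharp structural feature — for instance, the rightmost column $c$ beyond which the tableau is ``locally valid'' in the sense that each subsequent column respects top$>$middle — to cut each bad extension uniquely. Matching the coefficient to the stated rational expression will then require careful algebraic bookkeeping of the many multinomial factors involved, and is likely to invoke a Vandermonde-type convolution or a constant-term manipulation in the spirit of Lemma~\ref{Lemma-Sum-Free-i}.
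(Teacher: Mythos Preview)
Your overall scheme matches the paper's: realise $b_{n,k}$ as a sum of linear extensions of posets $D_\pi$, delete the $n$ cover relations between the two long rows to obtain the direct sum $F_\pi+L_n$, and read off the main term $\binom{2n+k}{n}f_{n,k}$ (your matching argument for $f_{n,k}=\binom{n+k}{2k}(2k-1)!!$ is correct and agrees with Lemma~\ref{fnk-closed-formula}). The paper likewise classifies the bad extensions by a single index $j$, namely the \emph{rightmost} column in which the top--middle relation is violated; this is your ``canonical bad suffix'' up to a harmless left--right reflection.

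The gap is the passage from this classification to a recursion in $b_{n-j,m}$. Once $j$ is fixed, the poset splits as a left block $\widetilde F_{j}$ (columns $1,\dots,j$, with the reversed arrow at column~$j$) joined by the single edge $v_j\to v_{j+1}$ to a right block $D_{n-j}$; but this is \emph{not} an ordinal sum, because the bottom cells attached to $D_{n-j}$ lie below their middle neighbours and can therefore carry labels smaller than anything in $\widetilde F_j$. Consequently one cannot peel off a factor counted by $b_{n-j,\cdot}$ directly, and your reading of $s,m$ (which would force $s+m=k$, contradicting the independent triple sum $\sum_{s=0}^k\sum_{m=0}^s$) and of the coefficient does not survive this obstruction. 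The paper's fix is to introduce auxiliary posets $U_{(n;\cdot)}$ with the bottom--cell arrows reversed: an inclusion--exclusion over the $s$ bottom cells in the right block (Theorem~\ref{BNK-UNK}) rewrites $D_{n-j}$ as a signed combination of $I_{s-i}+U_{(n-j;\cdot)}$, and in each such term $v_{j+1}$ becomes the unique minimum of $U_{(n-j;\cdot)}$, so that $\widetilde F_j\to U_{(n-j;\cdot)}$ \emph{is} an ordinal sum and factors via Lemma~\ref{Lem-Direct-sum-ep}. This produces the intermediate recursion of Theorem~\ref{bnk-formula-two} in terms of $u_{n-j,i}$; a second inclusion--exclusion (Theorem~\ref{UNK-To-BNK}) converts $u$ back to $b$, and the resulting quadruple sum over $(j,s,i,m)$ is collapsed to the stated triple sum by computer algebra. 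In particular, $s$ is the number of bottom cells on the \emph{good} side (so $k-s$ sit in the $\widetilde F_j$ block, whence $f_{j,k-s}$ and the factor $2^{-(k-s)}$), while $m$ is the inner index of the $u\mapsto b$ conversion rather than a direct cell count in the residual tableau.
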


The proof of Theorem~\ref{bnk-formula-Recu-three} is given in  appendix~\ref{App:B}; the outline of the proof steps are:
\begin{enumerate}
\item We present closed formulas for linear extensions of several poset families related to $b_{n,k}$; see Lemmas \ref{fnk-closed-formula}, \ref{Fnk-Directsum-Ln}, \ref{Linear-Ex-tildeF}.

\item We study a counting function $u_{n,k}$ associated with $b_{n,k}$, as defined in Section \ref{Character-bnk-unk}.
We derive two transformation formulas between $b_{n,k}$ and $u_{n,k}$; see Theorems \ref{BNK-UNK} and \ref{UNK-To-BNK}.
This is necessary to derive the recurrence relation for $b_{n,k}$.

\item By applying the inclusion-exclusion principle together with the previous lemmas, we obtain a recurrence relation for $b_{n,k}$; see Theorems~\ref{bnk-formula-two} and~\ref{bnk-formula-Recu-three}.
\end{enumerate}

It is our expectation that the approach to counting standard Young tableaux with walls (or  phylogenetic networks) via linear extensions and the principle of inclusion-exclusion will find applications in future research.

\section{Concluding remarks, open problems}
\label{Section-concluding-remark}

This paper investigates enumeration formulas for two specific types of Young tableaux with walls.
The main achievement is the proof of Theorem~\ref{MainConject}, which leads to the verification of Pons and Batle's enumerative  conjecture on counting phylogenetic tree-child networks (see Conjecture~\ref{conj:PB}).
The enumeration of Young tableaux with walls is generally a challenging problem.
This paper provides several methods and techniques to facilitate future related studies.

For future research, we highlight the following two open problems.
\begin{prob}
Are there combinatorial proofs of Theorem~\ref{MainConject}, Conjecture~\ref{conj:PB} and Corollary~\ref{Canject-bnk}?
\end{prob}

\begin{prob}
Can Theorem~\ref{bnk-formula-Recu-three} be applied to prove Theorem~\ref{MainConject} or Corollary~\ref{Canject-bnk}?
\end{prob}

%\noindent\textbf{Ethical approval:} Not applicable.

%\noindent\textbf{Conflict interest:} None.

%\noindent\textbf{Declaration of competing interest:} The authors declared that they had no conflicts of interest with respect to %their authorship or the publication of this article.

%\noindent\textbf{Data availability:} Data availability is not applicable to this article as no new data were created or analyzed in %this study.

%\noindent\textbf{Funding:} This work was partially supported by the National Natural Science Foundation of China***********.

\section*{Acknowledgments}
We are grateful to Wenjie Fang for communicating Theorem~\ref{MainConject} (Fuchs and Yu's enumerative conjecture)  to us during his visit to Shandong University in the summer of 2025, to Shu Xiao Li for insightful discussions on Proposition~\ref{Definition-bnk-Proposition}, and to Michael Fuchs and Guan-Ru Yu for kindly sharing the intriguing background information, as well as for their inspiring comments, regarding  Theorem~\ref{MainConject}.
This work was supported by the National Science Foundation of China (grants
12571355, 12322115 \& 12271301) and the Fundamental Research Funds for
the Central Universities.
%The authors would like to thank the anonymous referee for valuable suggestions for improving the presentation.

\newpage
\appendix

\section{Appendix: Formulae  for $a_{n,k}$ and $b_{n,k}$ for some fixed small $k$}
We present formulas for $a(n,k)$ and $b_{n,k}$ here for observation of their structures.

We derived the following  formulas for $a_{n,k}$ directly from recursion~\eqref{eq:recank2}.
\begin{prop}\label{proposition-ank-init-term}
Let $0\leq k\leq n$.
We derive the formulas of $a_{n,k}$ for some small $k$:
\begin{align*}
&a_{n,0}=(2n-1)!!,
\\& a_{n,1}=(2n+1)!!-(2n)!!,
\\& a_{n,2}=\left(n+\frac{5}{3}\right)\cdot (2n+1)!!-(2n+2)!!,
\\& a_{n,3}=\frac{n+3}{3}\cdot(2n+3)!!-\left(n+\frac{79}{48} \right)\cdot (2n+2)!!,
\\& a_{n,4}=\left(\frac{n^2}{6}+\frac{7n}{6}+\frac{319}{189} \right)\cdot (2n+3)!!-\frac{(16n+31)(n+2)}{24}\cdot (2n+2)!!,
\\& a_{n,5}=\left(\frac{63n^2+609n+1006}{1890}\right)(2n+5)\cdot (2n+3)!!-\left(\frac{1}{6} n^{2}+\frac{13}{16} n +\frac{9107}{9216}\right)\cdot (2n+4)!!.
\end{align*}
\end{prop}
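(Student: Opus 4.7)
The plan is to prove the six formulas by a two-level induction: an outer induction on $k$, with an inner induction on $n$ for each fixed $k$. The outermost base case $k=0$ is precisely equation~\eqref{eq:an0}, $a_{n,0}=(2n-1)!!$, which requires no further argument. The key computational engine throughout is the simple recursion~\eqref{rec:ank}, namely $a_{n,k}=a_{n,k-1}+(2n+k-1)\,a_{n-1,k}$, rather than the telescoped form~\eqref{eq:recank2}, because \eqref{rec:ank} makes the inductive bookkeeping much cleaner.

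Fix $k\in\{1,2,3,4,5\}$ and assume the formula for $a_{n,k-1}$ has already been established by the outer induction. The inner base case $n=k$ is verified directly from Table~\ref{tabb-Ank} (e.g.\ $a_{1,1}=1$, $a_{2,2}=7$, $a_{3,3}=106$, $a_{4,4}=2575$, $a_{5,5}=87595$), which also pins down the constant of integration that would otherwise be free. For the inductive step on $n$, I would substitute the known expression for $a_{n,k-1}$ and the inductive expression for $a_{n-1,k}$ into the right-hand side of~\eqref{rec:ank}. After normalizing every term to a common pair of double-factorial monomials $(2n+k-1)!!$ and $(2n+k)!!$ via the identities $(2n+k)!!=(2n+k)(2n+k-2)!!$ and $(2n+k-1)!!=(2n+k-1)(2n+k-3)!!$, the verification reduces to checking a polynomial identity in $n$ whose coefficients are explicit rationals.

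The main obstacle is purely computational: for $k=4$ and $k=5$ the polynomial coefficients have degree $2$ with slightly nasty rational denominators (such as $\tfrac{319}{189}$ and $\tfrac{9107}{9216}$), so the manual algebra is tedious though completely mechanical. Two strategies mitigate this. First, one may guess the ansatz form $a_{n,k}=P_k(n)\,(2n+k)!!+Q_k(n)\,(2n+k-1)!!$ with $\deg P_k,\deg Q_k\le \lfloor k/2\rfloor$, and then determine the rational coefficients of $P_k$ and $Q_k$ uniquely by comparing enough values from Table~\ref{tabb-Ank}; the recursion~\eqref{rec:ank} then becomes a finite linear check. Second, one may confirm the identities symbolically in \texttt{Maple} (already used elsewhere in the paper), which makes the $k=5$ case entirely routine.

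As an alternative route that avoids the inner induction on $n$ altogether, one could evaluate the telescoped sum in~\eqref{eq:recank2} directly: rewriting $\prod_{j=i}^{n-1}(2(j+1)+k-1)=(2n+k-1)!!/(2i+k-1)!!$, one obtains
\begin{equation*}
a_{n,k}=a_{n,k-1}+(2n+k-1)!!\sum_{i=k}^{n-1}\frac{a_{i,k-1}}{(2i+k-1)!!},
\end{equation*}
and the sum on the right admits a closed evaluation once $a_{i,k-1}$ is substituted from the preceding induction step, producing the claimed formula in one shot per $k$. This second route also illuminates why an ansatz with two double-factorial monomials is the right guess, and it is essentially the computation that motivated Theorem~\ref{Semi-closed-formula-Ank}.
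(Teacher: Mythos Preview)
Your proposal is correct. The paper itself offers no detailed argument for this proposition: it simply states that the formulas were ``derived \ldots\ directly from recursion~\eqref{eq:recank2}'', which is exactly your alternative route (the telescoped sum $a_{n,k}=a_{n,k-1}+(2n+k-1)!!\sum_{i=k}^{n-1} a_{i,k-1}/(2i+k-1)!!$). Your primary approach via~\eqref{rec:ank} with a two-level induction is a different but equally valid verification; it trades the closed-form evaluation of a finite sum for a one-step polynomial identity in $n$, and your observation about the ansatz $P_k(n)(2n+k)!!+Q_k(n)(2n+k-1)!!$ with $\deg P_k,\deg Q_k\le\lfloor k/2\rfloor$ is exactly the structure that Theorem~\ref{Semi-closed-formula-Ank} later explains. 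Either method is a routine computation once the target formulas are in hand, so there is nothing substantive separating the two.
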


We derived the following  formulas for $b_{n,k}$ directly from the recursion in Theorem~\ref{bnk-formula-Recu-three}.
\begin{prop}\label{conjecture-bnk-0--5}
Let $0\leq k\leq n$.
We derive the formulas of $b_{n,k}$ for some small $k$:
\begin{align*}
&b_{n,0}=\frac{1}{n+1}\binom{2n}{n},
\\& b_{n,1}=\frac{1}{2}\cdot\frac{(2n+1)!}{(n!)^2} -2^{2n-1},
\\& b_{n,2}=\frac{n(n+\frac{5}{3})}{4}\cdot \frac{(2n+1)!}{(n!)^2}-n(n+1)\cdot 2^{2n-1},
\\& b_{n,3}=\frac{(n+3)}{3\cdot 2^4}\cdot \frac{(2n+3)!}{(n-2)!(n+1)!}-(n-1)n(n+1)\left(n+\frac{79}{48}\right)\cdot 2^{2n-2},
\\& b_{n,4}=\left(\frac{n^2}{6}+\frac{7n}{6}+\frac{319}{189}\right)\cdot\frac{(2n+3)!}{2^5\cdot(n-3)!(n+1)!}
\\ & \quad\quad\quad \quad\quad\quad -\frac{(16n+31)}{3}\cdot (n-2)(n-1)n(n+1)(n+2)\cdot 2^{2n-6}.
\\& b_{n,5}=\left(\frac{1}{15} n^{3}+\frac{73}{90} n^{2}+\frac{5057}{1890} n +\frac{503}{189}\right)\cdot\frac{(2n+4)!}{2^7\cdot (n-4)!(n+2)!}
\\& \quad\quad\quad -\left(\frac{1}{6} n^{2}+\frac{13}{16} n +\frac{9107}{9216}\right)\cdot (n-3)(n-2)(n-1)n(n+1)(n+2)\cdot 2^{2n-3}.
\end{align*}
\end{prop}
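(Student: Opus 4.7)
The plan is to derive each closed form by combining Theorem~\ref{MainConject} with the explicit expressions for $a_{n,k}$ already tabulated in Proposition~\ref{proposition-ank-init-term}. Theorem~\ref{MainConject} gives the identity
\[
b_{n,k} \;=\; \frac{2^{n-k}}{(n-k+1)!}\,a_{n,k},
\]
so once $a_{n,k}$ has been written out for each $k\in\{0,1,2,3,4,5\}$, the corresponding formula for $b_{n,k}$ follows by direct simplification. The reduction $k=0$ is immediate: substituting $a_{n,0}=(2n-1)!!$ and using $(2n-1)!!=(2n)!/(2^n n!)$ collapses the right-hand side to $\binom{2n}{n}/(n+1)$, confirming the Catalan formula.

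For each subsequent $k$, I would convert the double factorials appearing in $a_{n,k}$ into ordinary factorials via $(2m-1)!!=(2m)!/(2^m m!)$ and $(2m)!!=2^m m!$, and then absorb the prefactor $2^{n-k}/(n-k+1)!$. The summands of $a_{n,k}$ involving odd double factorials $(2n+c)!!$ (with $c$ odd) yield the ratio-of-factorials terms displayed in Proposition~\ref{conjecture-bnk-0--5}, while those involving even double factorials turn into pure powers of $2$ multiplied by a short polynomial in $n$. For example, when $k=2$, substituting $a_{n,2}=(n+5/3)(2n+1)!!-(2n+2)!!$ and multiplying by $2^{n-2}/(n-1)!$ produces $\tfrac{n(n+5/3)}{4}\cdot\tfrac{(2n+1)!}{(n!)^2}$ from the first summand and $-n(n+1)\cdot 2^{2n-1}$ from the second, matching the stated expression exactly. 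The cases $k=3,4,5$ are handled identically, with the polynomial prefactors of Proposition~\ref{proposition-ank-init-term} passing through unchanged.

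The main obstacle is purely bookkeeping: reconciling the factorials $(n-k+1)!$, $n!$, $(n+1)!$, $(n-k)!$ against the index shifts introduced by the double-factorial conversion, and keeping careful track of the powers of $2$. There is no conceptual difficulty; each of the six cases is an independent short computation. An alternative route is to induct on $n$ using Theorem~\ref{bnk-formula-Recu-three} directly, verifying the base cases from Table~\ref{tabb-Bnk} and checking that the proposed ansatz satisfies the recurrence for each $k$. This second approach is considerably more laborious since the recursion of Theorem~\ref{bnk-formula-Recu-three} mixes many lower values of $b_{n,k}$, so the route via Theorem~\ref{MainConject} is by far the cleaner option.
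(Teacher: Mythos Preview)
Your approach is correct but differs from the paper's. The paper states just before this proposition that the formulas were derived ``directly from the recursion in Theorem~\ref{bnk-formula-Recu-three}''---that is, via the large inclusion-exclusion recurrence for $b_{n,k}$ established in Appendix~\ref{App:B}. This is exactly the route you mention as a more laborious alternative and then set aside.

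Your primary route---apply Theorem~\ref{MainConject} to convert the already-tabulated $a_{n,k}$ formulas of Proposition~\ref{proposition-ank-init-term}---is logically sound and considerably shorter; the sample computation for $k=2$ is correct, and the remaining cases go through in the same way. The trade-off is that the paper's route gives an \emph{independent} derivation of the $b_{n,k}$ formulas, one that does not pass through Theorem~\ref{MainConject}. That independence is the point: these appendix formulas were computed ``for observation of their structures,'' i.e., as numerical evidence \emph{for} the main identity rather than as consequences of it. Once Theorem~\ref{MainConject} is in hand, your shortcut is the natural way to present Proposition~\ref{conjecture-bnk-0--5}; if the goal is to exhibit supporting evidence computed without assuming the main theorem, the paper's recurrence-based computation is the appropriate one.
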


\section{Appendix: Linear extensions of some posets}
\label{App:B}

In this Appendix, we provide formulas for the linear extensions of some posets.
Subsec.~\ref{Section-P-partitions-linear extensions} introduces the theory of $P$-partitions and linear extensions of posets, which is required later.
Subsec.~\ref{Section-a-class-of-posets} presents formulas  for linear extensions of several poset families. This provides some necessary lemmas for deriving the recurrence formula for $b_{n,k}$.
The relationship between $b_{n,k}$ and $u_{n,k}$ is characterized in Subsec.~\ref{Character-bnk-unk}.
In Subsec.~\ref{Section-Recurrence-for-bnk}, we establish a recurrence relation for $b_{n,k}$.

\subsection{$P$-partitions and linear extensions}\label{Section-P-partitions-linear extensions}

In this subsection, we introduce some concepts and useful lemmas  regarding $P$-partitions and linear extensions.

Let $(P,\preceq)$ be a finite \emph{partially ordered set} (or \emph{poset}).
We say that $t$ \emph{covers} $s$ if $s\prec t$ and there is no $c\in P$ such that $s\prec c\prec t$.
The \emph{Hasse diagram} of a finite poset $P$ is a graph whose vertices are the elements of $P$, whose edges represent cover relations, and in which, if $s\prec t$, then $t$ is drawn above $s$ in the diagram.

A \emph{$P$-partition} is a map $\sigma: P\rightarrow \mathbb{N}$ satisfying the following condition:
If $s\prec t$ in $P$, then $\sigma(s)\geq \sigma(t)$; that is, $\sigma$ is \emph{order-reversing}.
In addition, if $\sum_{s \in P} \sigma(s) = n$, we say $\sigma$ is a \emph{$P$-partition of $n$} and write $|\sigma| = n$.
It is often convenient to represent the relation $s \prec t$ by an arrow $s \to t$.
In this notation, a $P$-partition corresponds to a vertex labeling $\sigma$ such that for every arrow $s \to t$, we have $\sigma(s) \geq \sigma(t)$.

The concept of $P$-partitions was first investigated by MacMahon \cite[Subsections 439,441]{MacMahon19}.
In 1970, Knuth \cite{Knuth70} provided clarification on MacMahon's work within the theory of $P$-partitions.
Later, Knuth \cite{Knuth70} offered a clarification of MacMahon's results in his 1970 work on the theory of $P$-partitions.
The first systematic treatment of this subject, however, was established by Stanley in his seminal papers \cite{Stanley-Order-parti71,Stanley-Order-parti72}.

We denote the set of all $P$-partitions on the poset $P$ as $\pi(P)$.
The fundamental generating function associated with $\pi(P)$ is defined as
$$F_P(x_1,x_2,\ldots,x_p)=\sum_{\sigma\in \pi(P)}x_1^{\sigma(1)}x_2^{\sigma(2)}\cdots x_p^{\sigma(p)}.$$
By setting $x_i=q$ for all $i$, we obtain the \emph{enumerative generating function} of $\pi(P)$:
\begin{align*}
G_P(q)=F_P(q,q,\ldots,q)=\sum_{\sigma\in \pi(P)} q^{|\sigma|}.
\end{align*}

The set $[p]:=\{1,2,\ldots,p\}$ with its usual order forms a $p$-element chain, that is, a poset in which all elements are totally ordered. This poset is denoted $\mathbf{p}$.
Let $\#P=p$. An order-preserving bijection $\tau: P\rightarrow \mathbf{p}$ is called a \emph{linear extension} of $P$.
The number of linear extensions of $P$ is denoted by $e(P)$.

Now, let $(P,\preceq)$ be a finite poset on the set $[p]$.
Any linear extension $\tau \colon P \to \mathbf{p}$ can be identified with the permutation $\tau^{-1}(1), \tau^{-1}(2), \dots, \tau^{-1}(p)$ of $[p]$.
The collection of all such $e(P)$ permutations is denoted by $\mathcal{L}(P)$ and is called the \emph{Jordan--H\"older set} of $P$.
Counting linear extensions of a poset is a computationally difficult problem, and was shown by Brightwell and Winkler \cite{Brightwell91} to be \#P-complete.

Let $\mu=\mu_1\mu_2\cdots \mu_p$ be a permutation of $[p]$. For $1\leq i\leq p-1$, if $\mu_i>\mu_{i+1}$, then $i$ is called a \emph{descent} of $\mu$.
The \emph{descent set} of $\mu$ is defined as
$$D_{\mu} = \{ i \in [p-1] : \mu_i > \mu_{i+1} \}.$$
Let $d(\mu)$ denote the number of descents of $\mu$, that is, $d(\mu) = \# D_{\mu}$.

\begin{thm}[\text{\cite[Theorem 3.15.5]{Stanley-Vol-1}}]
Let $(P,\preceq)$ be a finite poset on $[p]:=\{1,2,\ldots,p\}$. Then
\begin{align*}
F_P(x_1,x_2,\ldots, x_p)=\sum_{\mu\in \mathcal{L}(P)}\frac{\prod_{j\in D_{\mu}}x_{\mu_1}x_{\mu_2}\cdots x_{\mu_j}}{\prod_{i=1}^p (1-x_{\mu_1}x_{\mu_2}\cdots x_{\mu_i})}.
\end{align*}
\end{thm}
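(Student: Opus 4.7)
The plan is to establish this identity via the fundamental lemma of $P$-partition theory, which decomposes $\pi(P)$ into pieces indexed by the linear extensions of $P$. For each $\mu \in \mathcal{L}(P)$, let $\pi(\mu)$ denote the set of maps $\sigma : [p] \to \mathbb{N}$ satisfying
\begin{align*}
\sigma(\mu_1) \geq \sigma(\mu_2) \geq \cdots \geq \sigma(\mu_p),
\end{align*}
with strict inequality $\sigma(\mu_j) > \sigma(\mu_{j+1})$ precisely when $j \in D_\mu$. The first goal is the disjoint-union decomposition
\begin{align*}
\pi(P) = \bigsqcup_{\mu \in \mathcal{L}(P)} \pi(\mu),
\end{align*}
and the second is to evaluate the generating function on each piece $\pi(\mu)$ in closed form.

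The main obstacle is the decomposition, where the placement of strict inequalities must match the descent set of $\mu$ exactly. For the assignment $\sigma \mapsto \mu$, I would sort the elements of $P$ in weakly decreasing order of $\sigma$-value, breaking ties by preferring the element with smaller label in $[p]$; the order-reversing property of $\sigma$ ensures that the resulting sequence is indeed a linear extension of $P$. For uniqueness and correctness, I would observe that the tie-breaking rule forces $\sigma(\mu_j) > \sigma(\mu_{j+1})$ whenever $\mu_j > \mu_{j+1}$, since otherwise the tie between $\mu_j$ and $\mu_{j+1}$ would have been broken the other way; conversely, when $\mu_j < \mu_{j+1}$ (ascent), the equality $\sigma(\mu_j) = \sigma(\mu_{j+1})$ is permitted. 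This yields precisely the combinatorial condition defining $\pi(\mu)$ and places each $\sigma \in \pi(P)$ in a unique piece.

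Once the decomposition is in hand, the generating function on a fixed $\pi(\mu)$ is a direct computation. I would introduce the gap variables $\beta_p = \sigma(\mu_p)$ and $\beta_j = \sigma(\mu_j) - \sigma(\mu_{j+1})$ for $1 \leq j < p$, so that $\beta_j \geq 0$ with $\beta_j \geq 1$ exactly when $j \in D_\mu$. The telescoping identity $\sigma(\mu_i) = \sum_{j=i}^{p} \beta_j$ converts the monomial $\prod_{i=1}^{p} x_{\mu_i}^{\sigma(\mu_i)}$ into $\prod_{j=1}^{p} (x_{\mu_1} x_{\mu_2} \cdots x_{\mu_j})^{\beta_j}$, and each $\beta_j$ contributes an independent geometric series: an unconstrained index yields $(1 - x_{\mu_1} \cdots x_{\mu_j})^{-1}$, while a descent $j \in D_\mu$ shifts the series by one to produce the extra numerator factor $x_{\mu_1} \cdots x_{\mu_j}$. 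Multiplying these factors and summing over $\mu \in \mathcal{L}(P)$ produces exactly the claimed formula for $F_P(x_1, \ldots, x_p)$.
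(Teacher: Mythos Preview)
The paper does not supply a proof of this theorem; it is quoted from Stanley's \emph{Enumerative Combinatorics, Vol.~1} as background material, so there is no in-paper argument to compare against. Your proposal is precisely the standard proof found there---the fundamental lemma decomposing $\pi(P)$ as $\bigsqcup_{\mu \in \mathcal{L}(P)} \pi(\mu)$ via the sort-and-break-ties construction, followed by the gap-variable evaluation of the generating function on each piece---and it is correct.
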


Define the \emph{major index} $\mathrm{maj}(\mu)$ of $\mu$ by $\mathrm{maj}(\mu)=\sum_{j\in D_{\mu}} j$.
\begin{thm}[\text{\cite[Theorem 3.15.7]{Stanley-Vol-1}}]
Let $(P,\preceq)$ be a finite poset on $[p]$. Then
\begin{align*}
G_P(q)=\frac{\sum_{\mu\in \mathcal{L}(P)} q^{\mathrm{maj}(\mu)}}{(1-q)(1-q^2)\cdots (1-q^p)}.
\end{align*}
\end{thm}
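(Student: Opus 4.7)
The plan is to derive the identity from the \emph{Fundamental Lemma of $P$-partitions}, which asserts the disjoint decomposition
\begin{equation*}
\pi(P) = \bigsqcup_{\mu \in \mathcal{L}(P)} \pi(\mu),
\end{equation*}
where, for each linear extension $\mu = \mu_1\mu_2\cdots\mu_p$, the fiber $\pi(\mu)$ consists of maps $\sigma \colon P \to \mathbb{N}$ satisfying
\begin{equation*}
\sigma(\mu_1) \geq \sigma(\mu_2) \geq \cdots \geq \sigma(\mu_p) \geq 0,
\end{equation*}
with the inequality being strict precisely at the descents; that is, $\sigma(\mu_i) > \sigma(\mu_{i+1})$ whenever $i \in D_\mu$.

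To establish this decomposition, I would associate to each $\sigma \in \pi(P)$ a unique linear extension $\mu_\sigma$ obtained by sorting the elements of $[p]$ in weakly decreasing order of $\sigma$-value and resolving ties by increasing natural label. Under the (implicit) assumption that the labeling of $P$ is natural---which the statement tacitly requires for consistency---the order-reversing property of $\sigma$ ensures that $\mu_\sigma$ is a linear extension of $P$, and the tie-breaking rule forces the equalities $\sigma(\mu_i) = \sigma(\mu_{i+1})$ to occur only at ascents of $\mu_\sigma$. Consequently $\sigma \in \pi(\mu_\sigma)$, and the assignment $\sigma \mapsto \mu_\sigma$ is manifestly a bijection whose inverse is the obvious inclusion.

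Next I would evaluate the contribution of each fiber to $G_P(q)$. Writing $a_i := \sigma(\mu_i)$ and introducing the differences $b_i := a_i - a_{i+1}$ for $1 \leq i \leq p-1$ together with $b_p := a_p$, the constraints defining $\pi(\mu)$ translate to $b_i \in \mathbb{N}$ for all $i$, strengthened to $b_i \geq 1$ precisely when $i \in D_\mu$. Using the telescoping identity $\sum_{i=1}^{p} a_i = \sum_{i=1}^{p} i \cdot b_i$ and the shift $c_i := b_i - [i \in D_\mu]$, the sum over $\pi(\mu)$ factors as a product of independent geometric series, giving
\begin{equation*}
\sum_{\sigma \in \pi(\mu)} q^{|\sigma|} = \frac{q^{\mathrm{maj}(\mu)}}{\prod_{i=1}^{p}(1-q^i)}.
\end{equation*}
Summing over $\mu \in \mathcal{L}(P)$ and invoking the disjoint decomposition then yields the claimed formula.

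The main obstacle is the Fundamental Lemma itself: one must carefully check that the sort-and-tie-break procedure produces a genuine linear extension of $P$, and that its descent set records exactly the strict descents of $\sigma$. Once this combinatorial bijection between $\pi(P)$ and $\bigsqcup_\mu \pi(\mu)$ is in hand, the remainder of the argument is a routine generating-function calculation.
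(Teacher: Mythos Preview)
The paper does not supply its own proof of this statement: it is quoted verbatim from Stanley's book as a known result. In the paper's presentation, the intended derivation is simply to specialize the immediately preceding cited result (Stanley's Theorem~3.15.5) by setting every $x_i$ equal to $q$; the numerator $\prod_{j\in D_\mu} x_{\mu_1}\cdots x_{\mu_j}$ then collapses to $q^{\sum_{j\in D_\mu} j}=q^{\mathrm{maj}(\mu)}$ and the denominator becomes $\prod_{i=1}^p(1-q^i)$, which is a one-line argument.

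Your proposal is correct but takes a more self-contained route: rather than invoking the multivariate Theorem~3.15.5, you reprove the special case directly from the Fundamental Lemma of $P$-partitions, carrying out the change of variables $b_i=a_i-a_{i+1}$ and summing the resulting geometric series. This is exactly the standard argument (and is in fact how Theorem~3.15.5 itself is proved in Stanley), so there is no genuine difference in mathematical content---you have simply unpacked the cited lemma rather than quoting it. One small caveat: as you note, the sort-and-tie-break construction of $\mu_\sigma$ requires the labeling of $P$ by $[p]$ to be natural (i.e., $s\prec t$ implies $s<t$ as integers); the paper's setup does not make this explicit, but it is the standard convention underlying the Jordan--H\"older set $\mathcal{L}(P)$.
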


The following result provides a formula for the number $e(P)$ of linear extensions of a poset.
\begin{lem}[\text{\cite[Section 3]{Stanley-Vol-1}}]\label{Formula-ep-EC1}
Let $(P,\preceq)$ be a finite poset on $[p]$. Then
\begin{align}
e(P)=p!\cdot \bigg(\Big((1-q)^p\cdot G_P(q) \Big)\Big|_{q=1}\bigg).\label{linearExtension}
\end{align}
\end{lem}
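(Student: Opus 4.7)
The plan is to deduce this lemma as an immediate corollary of the preceding theorem, which asserts that
\[
G_P(q) = \frac{\sum_{\mu \in \mathcal{L}(P)} q^{\mathrm{maj}(\mu)}}{(1-q)(1-q^2)\cdots(1-q^p)}.
\]
Since the denominator has a zero of order exactly $p$ at $q=1$ while the numerator is a polynomial with $e(P)$ terms, the product $(1-q)^p G_P(q)$ is a rational function regular at $q=1$, so the specialization at $q=1$ in~\eqref{linearExtension} is well-defined.

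Concretely, I would first factor $1-q^i = (1-q)(1 + q + q^2 + \cdots + q^{i-1})$ and absorb one copy of $(1-q)$ into each of the $p$ factors of the denominator, yielding
\[
(1-q)^p\, G_P(q) \;=\; \frac{\sum_{\mu \in \mathcal{L}(P)} q^{\mathrm{maj}(\mu)}}{\displaystyle\prod_{i=1}^p \bigl(1 + q + q^2 + \cdots + q^{i-1}\bigr)}.
\]
Evaluating at $q=1$, each factor $1+q+\cdots+q^{i-1}$ in the denominator becomes $i$, so the denominator collapses to $\prod_{i=1}^p i = p!$. The numerator evaluates to $\sum_{\mu \in \mathcal{L}(P)} 1 = |\mathcal{L}(P)| = e(P)$, directly from the definition of $\mathcal{L}(P)$ as the Jordan--H\"older set.

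Combining these two specializations gives $\bigl((1-q)^p G_P(q)\bigr)\big|_{q=1} = e(P)/p!$, which rearranges to the claimed identity. There is essentially no obstacle here: the lemma is a one-line consequence of the rational-function formula for $G_P(q)$ recorded just above, and the only point meriting mention is that the cancellation of $(1-q)^p$ against the denominator is a genuine identity of rational functions (rather than a formal manipulation of power series), which is ensured by the match between the exponent $p$ and the order of the pole of $G_P(q)$ at $q=1$.
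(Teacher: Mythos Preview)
Your proof is correct. The paper does not actually supply a proof of this lemma at all; it is simply cited from Stanley's \emph{Enumerative Combinatorics} as a known result. Your derivation from the preceding theorem (the major-index formula for $G_P(q)$) is exactly the standard argument and is entirely adequate.
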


The \emph{direct sum} of $P_1$ and $P_2$ is the poset $P_1+ P_2$ on the disjoint union $P_1 \uplus P_2$ such that $s\preceq t$ in $P_1+ P_2$ if (i) $s,t\in P_1$ and $s\preceq t$ in $P_1$, or (ii) $s,t\in P_2$ and $s\preceq t$ in $P_2$.
The \emph{ordinal sum} of posets $P_1$ and $P_2$, which is the poset $P_1\oplus P_2$ on the disjoint union $P_1 \uplus P_2$ such that $s\preceq t$ in $P_1 \oplus P_2$ if (i) $s,t\in P_1$ and $s\preceq t$ in $P_1$, or (ii) $s,t\in P_2$ and $s\preceq t$ in $P_2$, or (iii) $s\in P_1$ and $t\in P_2$.

\begin{lem}[\text{\cite[Section 3.5]{Stanley-Vol-1}}]\label{Lem-Direct-sum-ep}
If $P=P_1+P_2+\cdots +P_k$ and $n_i=\#P_i$, then
\begin{align*}
e(P)=\binom{n_1+n_2+\cdots +n_k}{n_1,n_2,\ldots,n_k} e(P_1)e(P_2)\cdots e(P_k).
\end{align*}
If $P=P_1\oplus P_2\oplus \cdots \oplus P_k$ and $n_i=\#P_i$, then
\begin{align*}
e(P)=e(P_1)e(P_2)\cdots e(P_k).
\end{align*}
\end{lem}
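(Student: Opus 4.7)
My plan is to reduce both claims to the case $k=2$ by a routine induction on $k$ and then handle the direct-sum and ordinal-sum cases by direct combinatorial arguments, with an optional generating-function cross-check via Lemma~\ref{Formula-ep-EC1}.

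For the direct sum $P=P_1+P_2$, I would show that a linear extension $\tau\colon P\to [n_1+n_2]$ is in bijection with the data of: (a) an $n_1$-subset $S\subseteq[n_1+n_2]$ recording the ranks assigned to elements of $P_1$ (the complement being the ranks assigned to $P_2$), together with (b) the induced order-preserving bijections $\tau|_{P_i}$, which, after re-indexing $S$ and $[n_1+n_2]\setminus S$ as $[n_1]$ and $[n_2]$, are precisely arbitrary linear extensions of $P_1$ and $P_2$. The key point is that because there are no comparabilities between elements of $P_1$ and of $P_2$, every such data yields a legitimate linear extension and conversely; summing gives $\binom{n_1+n_2}{n_1}\,e(P_1)\,e(P_2)$, and induction then produces the multinomial factor $\binom{n_1+\cdots+n_k}{n_1,\dots,n_k}$.

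For the ordinal sum $P=P_1\oplus P_2$, the defining constraint that every element of $P_1$ lies strictly below every element of $P_2$ \emph{forces} $\tau(P_1)=[n_1]$ and $\tau(P_2)=\{n_1+1,\dots,n_1+n_2\}$, so the remaining freedom reduces to choosing a linear extension on each block independently, giving $e(P_1)e(P_2)$. Iterating gives the product formula for $k$ factors.

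As an independent verification of the direct-sum case, I would invoke the multiplicativity $G_{P_1+P_2}(q)=G_{P_1}(q)\,G_{P_2}(q)$, which holds because a $(P_1+P_2)$-partition is the same as an independent pair of $P_i$-partitions, and then apply Lemma~\ref{Formula-ep-EC1}:
\begin{align*}
e(P_1+P_2)
=(n_1+n_2)!\Big((1-q)^{n_1+n_2}G_{P_1+P_2}(q)\Big)\Big|_{q=1}
=(n_1+n_2)!\cdot\frac{e(P_1)}{n_1!}\cdot\frac{e(P_2)}{n_2!}.
\end{align*}
I do not anticipate a substantive obstacle: both statements are nearly immediate from the definitions of $+$ and $\oplus$. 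The only point that genuinely needs checking is that the shuffle in the direct-sum argument produces \emph{all} linear extensions and no spurious ones, and this is automatic because the only order constraints in $P_1+P_2$ come from within each individual summand.
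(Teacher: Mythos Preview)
Your proposal is correct: the shuffle bijection for the direct sum and the forced block decomposition for the ordinal sum are exactly the standard arguments, and your generating-function cross-check via Lemma~\ref{Formula-ep-EC1} is a valid alternative for the direct-sum case. Note, however, that the paper does not supply its own proof of this lemma; it is simply quoted from Stanley~\cite[Section~3.5]{Stanley-Vol-1}, so there is no in-paper argument to compare against.
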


Let $I_m$ be the poset consisting of $m$ isolated vertices, i.e., an $m$-element antichain.
Then we have $e(I_m)=m!$.
Let $L_m=I_1\oplus I_1\oplus \cdots \oplus I_m$ ($m$ times). Thus $L_m$ is an $m$-element chain. It follows that $e(L_m)=1$.

Let $P$ be a poset with $p$ elements.
In its Hasse diagram $\Gamma$, the relation $s \prec t$ is conventionally represented by an arrow $s \to t$.
If there exists a directed path from vertex $v$ to vertex $u$ in $\Gamma$, we say that $v$ \emph{reaches} $u$ and denote by $v \dashrightarrow u$.
The number of vertices in $\Gamma$ that can reach vertex $u$ is denoted by $\omega(u)$.

When the Hasse diagram $\Gamma$ of the poset $P$ is a directed tree in which every vertex has out-degree either $1$ or $0$, the enumerative generating function for $\pi(P)$ admits a simple form.

\begin{lem}[\text{\cite{LiuTangXin}}]\label{LTX-Outdegree-10}
If the Hasse diagram $\Gamma$ of the poset $P$ is a directed tree in which every vertex has out-degree either $1$ or $0$, then the generating function $G_P(q)$ is given by
\begin{align*}
G_P(q)=\prod_{v\in \Gamma}\frac{1}{1 - q^{\omega(v)}}.
\end{align*}
Furthermore, by Lemma \ref{Formula-ep-EC1}, we obtain
\begin{align*}
e(P)=\frac{p!}{\prod_{v\in \Gamma} \omega(v)}.
\end{align*}
\end{lem}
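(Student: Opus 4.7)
The plan is to prove the lemma in two stages. First, I would establish the product formula for the enumerative generating function $G_P(q)$ by constructing a weight-preserving bijection between $P$-partitions and unconstrained nonnegative integer sequences indexed by $P$. Second, I would derive the closed form for $e(P)$ by plugging the resulting product into Lemma~\ref{Formula-ep-EC1} and computing the limit at $q=1$.

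For the bijection, observe that the hypothesis ``directed tree with every out-degree $0$ or $1$'' forces exactly one vertex $r$ of out-degree $0$ (the unique maximum of $P$), and each remaining vertex $v$ has a unique edge $v \to p(v)$ to a distinguished parent $p(v)$. Given a $P$-partition $\sigma \in \pi(P)$, set
$$\delta(v) = \sigma(v) - \sigma(p(v)) \quad \text{for } v \neq r, \qquad \delta(r) = \sigma(r).$$
The order-reversing property $\sigma(v) \geq \sigma(p(v))$ guarantees $\delta(v) \in \mathbb{N}$, and conversely any sequence $(\delta(v))_{v \in P} \in \mathbb{N}^P$ reconstructs $\sigma$ by telescoping along the unique directed path from $v$ up to the root $r$. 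Iterating this telescoping shows that
$$\sigma(v) = \sum_{u \,:\, v \,\dashrightarrow\, u} \delta(u).$$
Swapping the order of summation and collecting by $u$ yields
$$|\sigma| = \sum_{v \in P} \sigma(v) = \sum_{u \in P} \delta(u) \cdot \#\{v : v \dashrightarrow u\} = \sum_{u \in P} \omega(u)\, \delta(u).$$
Since each $\delta(u)$ ranges independently over $\mathbb{N}$, the generating function factors completely:
$$G_P(q) = \sum_{\sigma \in \pi(P)} q^{|\sigma|} = \prod_{u \in P} \sum_{d \geq 0} q^{\omega(u) d} = \prod_{u \in P} \frac{1}{1 - q^{\omega(u)}}.$$

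For the linear extension count, I would apply Lemma~\ref{Formula-ep-EC1} using the identity $1 - q^{\omega(u)} = (1-q)(1 + q + \cdots + q^{\omega(u)-1})$ to rewrite
$$(1-q)^p\, G_P(q) = \prod_{u \in P} \frac{1-q}{1 - q^{\omega(u)}} = \prod_{u \in P} \frac{1}{1 + q + \cdots + q^{\omega(u)-1}},$$
which is a rational function regular at $q = 1$. Evaluating at $q = 1$ collapses each factor to $1/\omega(u)$, giving
$$e(P) = p!\cdot \left((1-q)^p G_P(q)\right)\big|_{q=1} = \frac{p!}{\prod_{u \in P} \omega(u)}.$$
I do not expect a serious obstacle here; the only delicate point is the bookkeeping convention that the trivial path $v \dashrightarrow v$ is counted, so that $\omega(u) \geq 1$ and the reachability sum really equals $\#\{v : v \preceq u\}$. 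Once the tree structure is used to produce the parent function $p(\cdot)$, the argument is essentially the classical hook-length formula for forests, and the decisive step is the index swap that converts $\sum_v \sigma(v)$ into $\sum_u \omega(u) \delta(u)$.
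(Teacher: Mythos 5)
Your proof is correct. Note that the paper itself gives no proof of this lemma---it is quoted from the reference [LiuTangXin], listed as ``in preparation''---so there is nothing to compare against; your argument is the standard one (the gap-variable bijection underlying the hook-length formula for forests) and it is complete. All the delicate points are handled properly: the edge-count argument $\sum_v \mathrm{outdeg}(v)=p-1$ forces a unique root $r$; order-reversal need only be checked on cover relations, which your telescoping reconstruction does; and the convention that the trivial path $u\dashrightarrow u$ is counted in $\omega(u)$ is indeed the one the paper uses (it is what makes $e(L_n)=n!/\prod_i i=1$ for the $n$-chain), so the index swap $\sum_v\sigma(v)=\sum_u\omega(u)\delta(u)$ and the evaluation $(1-q)^pG_P(q)\big|_{q=1}=\prod_u\omega(u)^{-1}$ both go through.
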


\subsection{Linear extensions of a class of posets.}\label{Section-a-class-of-posets}

In this subsection, we present a closed formula for the linear extensions of a class of posets.
This provides some necessary lemmas for deriving the recurrence formula for $b_{n,k}$.

Let $n$ be a positive integer.
Given a positive integer sequence $1\leq i_1<i_2<\cdots <i_k\leq n$, define the poset $F_{(n; i_1,i_2,\ldots,i_k)}$ as shown in Fig.~\ref{Tu4}, where the relation $s\prec t$ is conventionally represented by an arrow $s\to t$.
We set $F_{(n; i_1,i_2,\ldots,i_k)}=L_n$ ($n$-element chain) if $k=0$ by convention.

\begin{figure}[htp]
\centering
\includegraphics[width=0.9\linewidth]{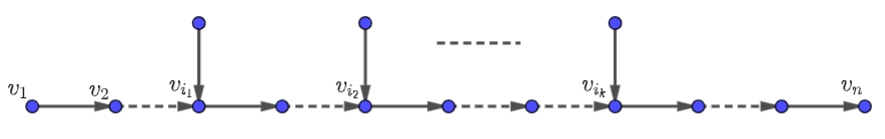}
\caption{The Hasse diagram of the poset $F_{(n; i_1,i_2,\ldots,i_k)}$.}
\label{Tu4}
\end{figure}

We now define $f_{n,k}$ as the sum of the number of linear extensions of all posets $F_{(n; i_1,i_2,\ldots,i_k)}$, i.e.,
\begin{equation*}
f_{n,k}:=\sum_{1\leq i_1<i_2<\cdots <i_k\leq n} e(F_{(n; i_1,i_2,\ldots,i_k)}).
\end{equation*}

\begin{lem}\label{fnk-formula}
Let $n$ be a positive integer and $0\leq k\leq n$. Then we have $f_{n,0}=1$ and
\begin{equation*}
f_{n,k}=\sum_{i_1=1}^{n-k+1}\sum_{i_2=i_1+1}^{n-k+2}\cdots \sum_{i_k=i_{k-1}+1}^{n} i_1(i_2+1)(i_3+2)\cdots (i_k+k-1)\quad \text{for}\quad 1\leq k\leq n.
\end{equation*}
\end{lem}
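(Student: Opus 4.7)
The plan is to prove the stronger, term-by-term identity
\begin{equation}\label{eq:termwise-fnk}
e(F_{(n;i_1,\ldots,i_k)}) = i_1(i_2+1)(i_3+2)\cdots(i_k+k-1)
\end{equation}
for every increasing sequence $1\leq i_1 < i_2 < \cdots < i_k \leq n$; summing~\eqref{eq:termwise-fnk} over all admissible tuples then yields the claimed expression for $f_{n,k}$. The case $k=0$ reduces to $e(L_n)=1$, which is immediate.

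First I would read off the structure of $F_{(n;i_1,\ldots,i_k)}$ from its Hasse diagram in Figure~\ref{Tu4}: the underlying set is the $n$-chain $1\prec 2\prec\cdots\prec n$ together with $k$ additional minimal elements $j_1,\ldots,j_k$, where each $j_\ell$ is covered solely by $i_\ell$. The Hasse diagram is then a directed in-tree rooted at $n$, so every vertex has out-degree at most one, placing us squarely in the scope of Lemma~\ref{LTX-Outdegree-10}; the total number of elements is $n+k$. Next I would compute $\omega(v)$, the number of vertices reaching $v$: each added minimum satisfies $\omega(j_\ell)=1$, while for a chain element $v\in\{1,\ldots,n\}$ the vertices reaching $v$ are $\{1,\ldots,v\}$ together with those $j_\ell$ for which $i_\ell\leq v$, giving $\omega(v)=v+c(v)$ with $c(v)=|\{\ell:i_\ell\leq v\}|$. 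Lemma~\ref{LTX-Outdegree-10} then produces
\[
e(F_{(n;i_1,\ldots,i_k)})=\frac{(n+k)!}{\prod_{v=1}^{n}(v+c(v))}.
\]

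The main step is the algebraic simplification of $\prod_{v=1}^{n}(v+c(v))$. I would partition $\{1,\ldots,n\}$ into the $k+1$ consecutive blocks $[i_\ell,i_{\ell+1}-1]$ with the conventions $i_0=1$ and $i_{k+1}=n+1$, on each of which $c(v)=\ell$ is constant. The $\ell$-th block contributes
\[
\prod_{v=i_\ell}^{i_{\ell+1}-1}(v+\ell)=\frac{(i_{\ell+1}+\ell-1)!}{(i_\ell+\ell-1)!},
\]
and multiplying the $k+1$ ratios telescopes: the junction between block $\ell$ and block $\ell+1$ leaves behind exactly a single factor $1/(i_{\ell+1}+\ell)$ from $(i_{\ell+1}+\ell-1)!/(i_{\ell+1}+\ell)!$. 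The overall product collapses to $(n+k)!/\bigl(i_1(i_2+1)\cdots(i_k+k-1)\bigr)$, which inserted into the previous display gives~\eqref{eq:termwise-fnk}.

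I do not anticipate any deep obstacle. The only mildly delicate point is the telescoping at the boundary blocks $\ell=0$ (which is empty when $i_1=1$, producing the trivial ratio $0!/0!=1$) and $\ell=k$ (whose numerator supplies the global factor $(n+k)!$); both degenerate cases are absorbed uniformly by the conventions $i_0=1$ and $i_{k+1}=n+1$, so no separate case analysis is required.
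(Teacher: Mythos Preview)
Your proposal is correct and follows essentially the same route as the paper: both compute $e(F_{(n;i_1,\ldots,i_k)})$ directly from Lemma~\ref{LTX-Outdegree-10}, observing that the Hasse diagram is an in-tree on $n+k$ vertices and evaluating $\prod_v\omega(v)=(n+k)!/\bigl(i_1(i_2+1)\cdots(i_k+k-1)\bigr)$. The paper simply records the values $\omega(v_{i_\ell})=i_\ell+\ell$ and asserts the product, whereas you spell out the telescoping of the block products; this is only a difference in level of detail, not in method.
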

\begin{proof}
For a $n$-element chain, we know that $e(L_n)=1$. It follows that $f_{n,0}=1$.
Now we consider the cases $1\leq k\leq n$.
Given a positive integer sequence $1\leq i_1<i_2<\cdots <i_k\leq n$, the Hasse diagram of the poset $F_{(n; i_1,i_2,\ldots,i_k)}$ is a directed tree in which every vertex has out-degree either $1$ or $0$.
By definition of $\omega(v)$, we obtain
$$\omega(v_{i_1})=i_1+1,\quad \omega(v_{i_2})=i_2+2, \quad\ldots,\quad \omega(v_{i_k})=i_k+k,$$
and so
$$\prod_{v\in F_{(n; i_1,i_2,\ldots,i_k)}} \omega(v)=\frac{(n+k)!}{i_1(i_2+1)(i_3+2)\cdots (i_k+k-1)}.$$
By Lemma \ref{LTX-Outdegree-10}, we obtain
\begin{align*}
e(F_{(n; i_1,i_2,\ldots,i_k)})=\frac{(n+k)!}{(n+k)!}\cdot i_1(i_2+1)(i_3+2)\cdots (i_k+k-1).
\end{align*}
This completes the proof.
\end{proof}

\begin{lem}\label{fnk-closed-formula}
Let $n$ be a positive integer and $0\leq k\leq n$. Then we have
\begin{equation}\label{eq:fnkclose}
f_{n,0}=1,\quad f_{n,k}=\frac{(n-k+1)(n-k+2)\cdots (n+k)}{2^k\cdot k!} \quad \text{for}\quad 1\leq k\leq n,
\end{equation}
and
\begin{equation}
\sum_{n\geq 0}f_{n,k}x^n=\frac{(2k-1)!!\cdot x^k}{(1-x)^{2k+1}}.
\end{equation}
\end{lem}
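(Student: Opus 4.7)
The plan is to derive a two-term recurrence from the combinatorial sum in Lemma~\ref{fnk-formula}, verify the proposed closed form by induction, and then obtain the generating function directly from the closed form. The argument should be essentially routine; no single step is expected to pose a serious obstacle.

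First, I would split the sum
$$f_{n,k}=\sum_{1\leq i_1<\cdots<i_k\leq n} i_1(i_2+1)\cdots(i_k+k-1)$$
according to the value of $i_k$. Terms with $i_k\leq n-1$ contribute exactly $f_{n-1,k}$, while terms with $i_k=n$ contribute $(n+k-1)\,f_{n-1,k-1}$, since the factor $i_k+k-1=n+k-1$ factors out of the inner sum, which then ranges over $1\leq i_1<\cdots<i_{k-1}\leq n-1$. This yields the recurrence
$$f_{n,k}=f_{n-1,k}+(n+k-1)\,f_{n-1,k-1}.$$

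Second, writing the proposed closed form as $f_{n,k}=\tfrac{(n+k)!}{(n-k)!\,2^k k!}$, I would verify by direct manipulation that the right-hand side of the recurrence equals
$$\frac{(n+k-1)!}{(n-k-1)!\,2^k k!}+(n+k-1)\cdot\frac{(n+k-2)!}{(n-k)!\,2^{k-1}(k-1)!}=\frac{(n+k-1)!}{2^k k!\,(n-k)!}\bigl[(n-k)+2k\bigr]=\frac{(n+k)!}{2^k k!\,(n-k)!}.$$
The boundary cases $f_{n,0}=1$ and $f_{k,k}=(2k-1)!!$ (coming from the unique sequence $(1,2,\ldots,k)$) follow immediately, and the closed form evaluates to $0$ whenever $0\leq n<k$ since one of the factors in the numerator vanishes, consistent with the empty-sum convention. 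A routine induction on $n$ with $k$ fixed then establishes the closed form for all $0\leq k\leq n$.

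Finally, for the generating function, substituting the closed form and reindexing via $m=n-k$ gives
$$\sum_{n\geq 0}f_{n,k}x^n=\frac{x^k(2k)!}{2^k k!}\sum_{m\geq 0}\binom{m+2k}{2k}x^m=\frac{(2k-1)!!\,x^k}{(1-x)^{2k+1}},$$
where I use the identities $(2k)!/(2^k k!)=(2k-1)!!$ and $\sum_{m\geq 0}\binom{m+2k}{2k}x^m=(1-x)^{-(2k+1)}$. This completes the proposed plan.
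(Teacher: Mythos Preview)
Your proposal is correct and follows essentially the same approach as the paper: both derive the recurrence $f_{n,k}=f_{n-1,k}+(n+k-1)f_{n-1,k-1}$ by splitting the sum on the largest index, then verify the closed form satisfies this recurrence with matching boundary data, and finally compute the generating function by the same reindexing and use of $\sum_{m\geq 0}\binom{m+2k}{2k}x^m=(1-x)^{-(2k+1)}$. The only cosmetic difference is that the paper first substitutes $c_j=i_j+j-1$ before splitting on the top value, whereas you work directly with $i_k$; the resulting recurrence and the remainder of the argument are identical.
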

\begin{proof}
We know that
\begin{equation*}
f_{n,k}=\sum_{i_1=1}^{n-k+1}\sum_{i_2=i_1+1}^{n-k+2}\cdots \sum_{i_k=i_{k-1}+1}^{n} i_1(i_2+1)(i_3+2)\cdots (i_k+k-1)\quad \text{for}\quad 1\leq k\leq n.
\end{equation*}
The above equation can be written equivalently as
\begin{equation*}
f_{n,k}=\sum_{c_1\geq 1;\ c_k\leq n+k-1 \atop c_{i}-c_{i-1}\geq 2 \text{ for } 2\leq i\leq k} c_1c_2\cdots c_k.
\end{equation*}
We consider the following two cases according to $c_k$:
1): When $c_k=n+k-1$, we have $c_{k-1}\leq c_k-2=n+k-3$ and
\begin{align*}
f_{n,k}&=(n+k-1)\sum_{c_1\geq 1;\ c_{k-1}\leq n+k-3 \atop c_{i}-c_{i-1}\geq 2 \text{ for } 2\leq i\leq k-1} c_1c_2\cdots c_{k-1}
\\ &=(n+k-1)\cdot f_{n-1,k-1}.
\end{align*}
2): When $c_k\neq n+k-1$, we get $f_{n,k}=f_{n-1,k}$. Therefore, we obtain
\begin{equation*}
f_{n,k}=f_{n-1,k}+ (n+k-1)\cdot f_{n-1,k-1}.
\end{equation*}
Let
\begin{equation*}
g_{n,k}=\frac{(n-k+1)(n-k+2)\cdots (n+k)}{2^k\cdot k!}.
\end{equation*}
One can verify that $g_{n,k}=g_{n-1,k}+(n+k-1)\cdot g_{n-1,k-1}$.
It is clear that $f_{n,k}$ and $g_{n,k}$ satisfy the same initial conditions.
It follows that
\begin{equation*}
f_{n,k}=\frac{(n-k+1)(n-k+2)\cdots (n+k)}{2^k\cdot k!} \quad \text{for}\quad 1\leq k\leq n.
\end{equation*}
Now we have
\begin{align*}
\sum_{n\geq 0}f_{n,k}x^n&=\sum_{n\geq k}\frac{(n-k+1)(n-k+2)\cdots (n+k)}{2^k\cdot k!} x^n
\\ &= \frac{(2k)!}{k!\cdot 2^k} \sum_{n\geq k}\frac{(n+k)(n+k-1)\cdots (n-k+1)}{(2k)!}x^{n}
\\ &= \frac{(2k)!}{k!\cdot 2^k} \sum_{n\geq k}\binom{n+k}{2k}x^{n}=\frac{(2k)!}{k!\cdot 2^k} \sum_{m\geq 0}\binom{2k+m}{m}x^{m+k}
\\ &= \frac{(2k-1)!!\cdot x^k}{(1-x)^{2k+1}}.
\end{align*}
This completes the proof.
\end{proof}

Note that the sequences $(f_{n,1})_{n\geq 1}$,  $(f_{n,2})_{n\geq 2}$ and $(f_{n,3})_{n\geq 3}$ are the sequences [A240440], [A050534] and [A000217] in the OEIS~\cite{Sloane23}, respectively.

We now consider the linear extensions of the direct sum of the poset $F_{(n; i_1,i_2,\ldots,i_k)}$ and an $n$-element chain $L_n$.
\begin{lem}\label{Fnk-Directsum-Ln}
Let $n$ be a positive integer and $0\leq k\leq n$. Then we have
\begin{align*}
\sum_{1\leq i_1<i_2<\cdots <i_k\leq n} e(F_{(n; i_1,i_2,\ldots,i_k)}+L_n)=\binom{2n+k}{n}\cdot f_{n,k}.
\end{align*}
\end{lem}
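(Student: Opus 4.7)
The plan is to apply the direct-sum formula for linear extensions from Lemma~\ref{Lem-Direct-sum-ep}, which will immediately reduce the claim to the definition of $f_{n,k}$ itself.

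First, I will record the cardinality of the summand poset. From the description of the Hasse diagram of $F_{(n;i_1,\ldots,i_k)}$ (an $n$-element chain together with $k$ additional vertices attached at positions $i_1,\ldots,i_k$), the poset has exactly $n+k$ elements; this is consistent with the identity $e(F_{(n;i_1,\ldots,i_k)})=(n+k)!/\prod_v\omega(v)$ used in the proof of Lemma~\ref{fnk-formula}. The chain $L_n$ has $n$ elements with $e(L_n)=1$, so the direct sum $F_{(n;i_1,\ldots,i_k)}+L_n$ has $2n+k$ elements in total.

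Next, applying the direct-sum formula of Lemma~\ref{Lem-Direct-sum-ep} with $(n_1,n_2)=(n+k,n)$ gives
$$
e\bigl(F_{(n;i_1,\ldots,i_k)}+L_n\bigr)=\binom{2n+k}{n+k,\,n}\,e(F_{(n;i_1,\ldots,i_k)})\,e(L_n)=\binom{2n+k}{n}\,e(F_{(n;i_1,\ldots,i_k)}),
$$
where the prefactor is independent of the indices $i_1,\ldots,i_k$. Summing both sides over all strictly increasing sequences $1\leq i_1<\cdots<i_k\leq n$ and pulling out the binomial coefficient, the remaining sum is exactly the definition of $f_{n,k}$, yielding the claimed identity. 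The boundary case $k=0$ is also covered, since $F_{(n)}$ is then interpreted as $L_n$ with $f_{n,0}=1$, and the formula collapses to $e(L_n+L_n)=\binom{2n}{n}$.

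I do not anticipate any real obstacle in carrying out this plan: the argument is essentially a one-line consequence of the direct-sum formula once the cardinality $n+k$ of $F_{(n;i_1,\ldots,i_k)}$ is correctly identified. The only aspect worth double-checking is that the direct-sum structure of Lemma~\ref{Lem-Direct-sum-ep} applies cleanly here (the two summand posets being disjoint and unrelated), which is immediate from the definition of $+$ recalled just before Lemma~\ref{Lem-Direct-sum-ep}.
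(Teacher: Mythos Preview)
Your proposal is correct and matches the paper's own proof, which simply states that the result follows directly from Lemma~\ref{Lem-Direct-sum-ep}. You have merely spelled out the details of that one-line deduction.
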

\begin{proof}
This result follows directly from Lemma \ref{Lem-Direct-sum-ep}.
\end{proof}

Given a positive integer sequence $1\leq i_1<i_2<\cdots <i_k\leq n$, define the poset $\widetilde{F}_{(n; i_1,i_2,\ldots,i_k)}$ as shown in Fig.~\ref{Tu5}, where the relation $s\prec t$ is conventionally represented by an arrow $s\to t$.

\begin{figure}[htp]
\centering
\includegraphics[width=0.9\linewidth]{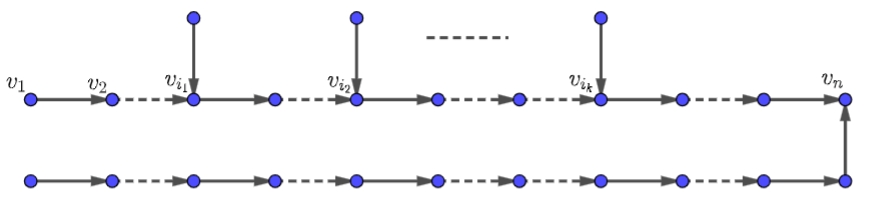}
\caption{The Hasse diagram of the poset $\widetilde{F}_{(n; i_1,i_2,\ldots,i_k)}$.}
\label{Tu5}
\end{figure}

Similarly, we define $\widetilde{f}_{n,k}$ as the sum of the number of linear extensions of all posets $\widetilde{F}_{(n; i_1,i_2,\ldots,i_k)}$, i.e.,
\begin{align*}
\widetilde{f}_{n,k}=\sum_{1\leq i_1<i_2<\cdots <i_k\leq n} e(\widetilde{F}_{(n; i_1,i_2,\ldots,i_k)}).
\end{align*}

\begin{lem}\label{Linear-Ex-tildeF}
Let $n$ be a positive integer and $0\leq k\leq n$. Then we have
\begin{align*}
\widetilde{f}_{n,k}=\binom{2n+k-1}{n}\cdot f_{n,k}.
\end{align*}
\end{lem}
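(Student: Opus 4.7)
The plan is to examine the Hasse diagram of $\widetilde{F}_{(n;i_1,\ldots,i_k)}$ in Fig.~\ref{Tu5} and to recognize it as a direct sum of two simpler posets whose linear-extension counts are already understood from the earlier lemmas. Comparing the statement of Lemma \ref{Linear-Ex-tildeF} with the closely analogous Lemma \ref{Fnk-Directsum-Ln}, whose right-hand side differs only by the replacement of $\binom{2n+k}{n}$ with $\binom{2n+k-1}{n}$, strongly suggests the natural decomposition $\widetilde{F}_{(n;i_1,\ldots,i_k)} = F'_{(n;i_1,\ldots,i_k)} + L_n$, where $F'_{(n;i_1,\ldots,i_k)}$ has $n+k-1$ elements and is obtained from $F_{(n;i_1,\ldots,i_k)}$ by deleting its unique maximum (the top of the main chain); the total size then matches $(n+k-1)+n = 2n+k-1$.

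First I would verify that removing the unique maximum of a finite poset preserves the number of linear extensions, since in any linear extension $\tau\colon P\to\mathbf{p}$ the top label $p$ must be assigned to the maximum; hence $e(F'_{(n;i_1,\ldots,i_k)}) = e(F_{(n;i_1,\ldots,i_k)})$. Next, applying Lemma \ref{Lem-Direct-sum-ep} to the direct-sum decomposition yields
$$
e\bigl(\widetilde{F}_{(n;i_1,\ldots,i_k)}\bigr) = \binom{2n+k-1}{n+k-1,\,n}\cdot e\bigl(F'_{(n;i_1,\ldots,i_k)}\bigr)\cdot e(L_n) = \binom{2n+k-1}{n}\cdot e\bigl(F_{(n;i_1,\ldots,i_k)}\bigr).
$$
Summing both sides over all sequences $1\leq i_1<i_2<\cdots<i_k\leq n$ and invoking the definitions of $f_{n,k}$ and $\widetilde{f}_{n,k}$ will then yield the desired identity.

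The main potential obstacle is simply reading Fig.~\ref{Tu5} correctly. If $\widetilde{F}_{(n;i_1,\ldots,i_k)}$ turns out to be a connected tree (rather than a disjoint union) whose Hasse diagram still has every vertex of out-degree at most $1$, then the alternative route is to compute $\prod_v\omega(v)$ directly via Lemma \ref{LTX-Outdegree-10} and compare with the analogous product $(n+k)!/\bigl(i_1(i_2+1)\cdots(i_k+k-1)\bigr)$ established for $F_{(n;i_1,\ldots,i_k)}$ in the proof of Lemma \ref{fnk-formula}; a short bookkeeping calculation will then yield the ratio $\binom{2n+k-1}{n}$. In either case the argument is short and essentially mechanical once the structure is pinned down, and runs entirely parallel to the proof of Lemma \ref{Fnk-Directsum-Ln}.
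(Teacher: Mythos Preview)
Your primary direct-sum decomposition misreads the figure: $\widetilde{F}_{(n;i_1,\ldots,i_k)}$ is a \emph{connected} poset on $2n+k$ (not $2n+k-1$) elements. In the paper's proof this is visible from $\omega(v_n)=2n+k$, which says every vertex reaches $v_n$; concretely, $\widetilde{F}$ is obtained from $F_{(n;i_1,\ldots,i_k)}$ by adjoining an extra $n$-chain whose top is covered by the maximum $v_n$ of $F$. So the equation $\widetilde{F}=F'+L_n$ is false as written, and your element count $2n+k-1$ for $\widetilde{F}$ is off by one.

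Your fallback route is exactly what the paper does. The Hasse diagram is a rooted tree with all out-degrees at most $1$, so Lemma~\ref{LTX-Outdegree-10} applies; one computes
\[
\prod_{v}\omega(v)=\frac{n!\,(n+k-1)!\,(2n+k)}{i_1(i_2+1)\cdots(i_k+k-1)},
\]
whence $e\bigl(\widetilde{F}_{(n;i_1,\ldots,i_k)}\bigr)=\binom{2n+k-1}{n}\cdot i_1(i_2+1)\cdots(i_k+k-1)=\binom{2n+k-1}{n}\cdot e\bigl(F_{(n;i_1,\ldots,i_k)}\bigr)$, and summing over $1\le i_1<\cdots<i_k\le n$ finishes.

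Your direct-sum idea is in fact salvageable with a one-line fix: apply the ``delete the unique maximum'' observation to $\widetilde{F}$ rather than to $F$. Since $v_n$ is the unique maximum of $\widetilde{F}$, one has $e(\widetilde{F})=e(\widetilde{F}\setminus\{v_n\})$, and $\widetilde{F}\setminus\{v_n\}$ genuinely \emph{is} the disjoint union $F'_{(n;i_1,\ldots,i_k)}+L_n$ on $2n+k-1$ elements. Then Lemma~\ref{Lem-Direct-sum-ep} together with $e(F')=e(F)$ gives the result with no $\omega$-bookkeeping at all---arguably cleaner than the paper's computation, though equivalent.
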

\begin{proof}
It is clear that $\omega(v_n)=2n+k$. When $k=0$, by Lemma \ref{LTX-Outdegree-10}, we have
$$\widetilde{f}_{n,0}=\frac{(2n)!}{n!(n-1)!(2n)}=\binom{2n-1}{n}.$$
When $1\leq k\leq n$, by Lemma \ref{LTX-Outdegree-10}, we obtain
\begin{align*}
\widetilde{f}_{n,k}=\frac{(2n+k)!}{n!(n+k-1)!(2n+k)}\cdot\sum_{i_1=1}^{n-k+1}\sum_{i_2=i_1+1}^{n-k+2}\cdots \sum_{i_k=i_{k-1}+1}^{n} i_1(i_2+1)(i_3+2)\cdots (i_k+k-1).
\end{align*}
This completes the proof.
\end{proof}

\subsection{Characterization of $b_{n,k}$}\label{Character-bnk-unk}

Let $n$ be a positive integer.
Given a positive integer sequence $1\leq i_1<i_2<\cdots <i_k\leq n$, define the poset $D_{(n; i_1,i_2,\ldots,i_k)}$ as shown in Fig.~\ref{Tu1}, where the relation $s \prec t$ is conventionally represented by an arrow $s \to t$.
\begin{figure}[htp]
\centering
\includegraphics[width=0.9\linewidth]{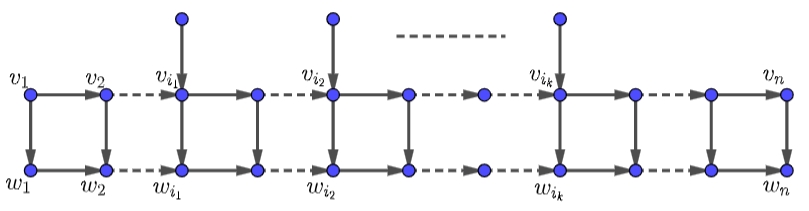}
\caption{The Hasse diagram of the poset $D_{(n; i_1,i_2,\ldots,i_k)}$.}
\label{Tu1}
\end{figure}

By the definition of $e(D_{(n; i_1,i_2,\ldots,i_k)})$, we see that this number is also equal to the number of labelings of the  poset with integers  $1,2,\ldots,2n+k$, such that every row and column is increasing along the direction of the arrow.
Therefore, we obtain
\begin{align*}
b_{n,k}=\sum_{1\leq i_1<i_2<\cdots <i_k\leq n} e(D_{(n; i_1,i_2,\ldots,i_k)}).
\end{align*}

Similarly, define the poset $U_{(n; i_1,i_2,\ldots,i_k)}$ as shown in Fig.~\ref{Tu2}, where the relation $s \prec t$ is conventionally represented by an arrow $s \to t$. In other words, $U_{(n; i_1,i_2,\ldots,i_k)}$ is obtained by reversing $k$ downward arrows in $D_{(n; i_1,i_2,\ldots,i_k)}$ to upward arrows.
\begin{figure}[htp]
\centering
\includegraphics[width=0.9\linewidth]{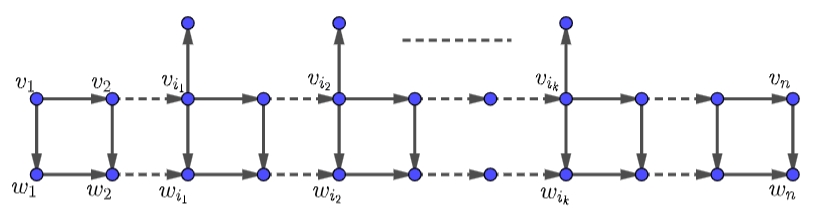}
\caption{The Hasse diagram of the poset $U_{(n; i_1,i_2,\ldots,i_k)}$.}
\label{Tu2}
\end{figure}

We now define $u_{n,k}$ as the sum of the number of linear extensions of all posets $U_{(n; i_1,i_2,\ldots,i_k)}$, i.e.,
\begin{align*}
u_{n,k}=\sum_{1\leq i_1<i_2<\cdots <i_k\leq n} e(U_{(n; i_1,i_2,\ldots,i_k)}).
\end{align*}

We now derive a transformation formula between $b_{n,k}$ and $u_{n,k}$.
\begin{thm}\label{BNK-UNK}
Let $n$ be a positive integer and $0\leq k\leq n$. Then we have
\begin{align*}
b_{n,k}=\sum_{i=0}^k (-1)^i \binom{2n+k}{k-i}\binom{n-i}{k-i}\cdot (k-i)!\cdot u_{n,i}.
\end{align*}
\end{thm}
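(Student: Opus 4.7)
The plan is to prove Theorem~\ref{BNK-UNK} by inclusion-exclusion on the $k$ column cover relations in $D_{(n;i_1,\ldots,i_k)}$ connecting each bottom cell to its middle counterpart; these are precisely the $k$ arrows that get flipped when passing from $D_{(n;i_1,\ldots,i_k)}$ to $U_{(n;i_1,\ldots,i_k)}$. The key structural observation is that the walls between horizontally adjacent bottom cells rule out any cover relation among them, so these $k$ column arrows are the only cover relations in $D_{(n;i_1,\ldots,i_k)}$ that involve a bottom cell at all. Let $\tilde D_{(n;i_1,\ldots,i_k)}$ denote the poset obtained from $D_{(n;i_1,\ldots,i_k)}$ by removing these $k$ cover relations; in $\tilde D_{(n;i_1,\ldots,i_k)}$ each bottom cell is isolated from everything else.

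For every $S\subseteq[k]$, let $N_S$ count the linear extensions of $\tilde D_{(n;i_1,\ldots,i_k)}$ in which the bottom cell at position $i_r$ receives a larger label than the middle cell at position $i_r$ for each $r\in S$ (the remaining pairs being unconstrained). Expanding
\begin{equation*}
\prod_{r=1}^{k}\mathbf{1}[\mathrm{bot}_{i_r}<\mathrm{mid}_{i_r}]
=\prod_{r=1}^{k}\bigl(1-\mathbf{1}[\mathrm{mid}_{i_r}<\mathrm{bot}_{i_r}]\bigr)
\end{equation*}
and summing over all linear extensions of $\tilde D_{(n;i_1,\ldots,i_k)}$ yields
\begin{equation*}
e\bigl(D_{(n;i_1,\ldots,i_k)}\bigr)=\sum_{S\subseteq[k]}(-1)^{|S|}N_S.
\end{equation*}
Imposing the $|S|$ reversed cover relations on $\tilde D_{(n;i_1,\ldots,i_k)}$ produces a disjoint union of the poset $U_{(n;\,j_1,\ldots,j_{|S|})}$, where $j_1<\cdots<j_{|S|}$ lists the elements of $\{i_r:r\in S\}$, together with a $(k-|S|)$-element antichain formed by the bottom cells at positions $i_r$ for $r\notin S$. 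Applying the direct-sum formula in Lemma~\ref{Lem-Direct-sum-ep} gives
\begin{equation*}
N_S=\binom{2n+k}{\,k-|S|\,}(k-|S|)!\cdot e\bigl(U_{(n;\,j_1,\ldots,j_{|S|})}\bigr).
\end{equation*}

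To finish, sum over all increasing sequences $1\leq i_1<\cdots<i_k\leq n$ and group the inclusion-exclusion terms by $i:=|S|$. Given any fixed $\{j_1<\cdots<j_i\}\subseteq[n]$, the pairs $\bigl((i_1,\ldots,i_k),S\bigr)$ with $\{i_r:r\in S\}=\{j_1,\ldots,j_i\}$ are in bijection with the $\binom{n-i}{k-i}$ choices of the remaining $k-i$ positions from $[n]\setminus\{j_1,\ldots,j_i\}$. Collecting contributions produces
\begin{equation*}
b_{n,k}=\sum_{i=0}^{k}(-1)^{i}\binom{2n+k}{k-i}(k-i)!\binom{n-i}{k-i}\,u_{n,i},
\end{equation*}
which is the claimed identity. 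The step requiring the most care is the direct-sum decomposition used to evaluate $N_S$: one must verify that after removing the $k$ column cover relations, the bottom cells of $\tilde D_{(n;i_1,\ldots,i_k)}$ remain incomparable to everything else in the poset (in particular that no transitivity restores a relation between a bottom cell and the top-middle structure). This is exactly what the walls guarantee, and it is what cleanly produces the factor $\binom{2n+k}{k-|S|}(k-|S|)!$ via Lemma~\ref{Lem-Direct-sum-ep}.
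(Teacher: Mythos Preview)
Your proof is correct and follows essentially the same route as the paper's own argument: both apply inclusion--exclusion on the $k$ column cover relations of $D_{(n;i_1,\ldots,i_k)}$, observe that removing them leaves the bottom cells as an isolated antichain so that Lemma~\ref{Lem-Direct-sum-ep} yields $N_S=\binom{2n+k}{k-|S|}(k-|S|)!\,e(U_{(n;j_1,\ldots,j_{|S|})})$, and then sum over all $k$-subsets of $[n]$ to pick up the multiplicity factor $\binom{n-i}{k-i}$. The only cosmetic difference is that the paper first presents a worked example ($n=4$, $k=2$) before writing the general formula, whereas you phrase the inclusion--exclusion via the indicator-product expansion.
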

\begin{proof}
This formula is derived by applying the inclusion–exclusion principle and Lemma \ref{Lem-Direct-sum-ep}.
We first illustrate the above formula with an example before proving the general case.

Let $n=4$ and $k=2$. Given $i_1=2, i_2=3$, we obtain the decomposition via the inclusion-exclusion principle as shown in Fig.~\ref{Tu3}.
\begin{figure}[htp]
\centering
\includegraphics[width=0.9\linewidth]{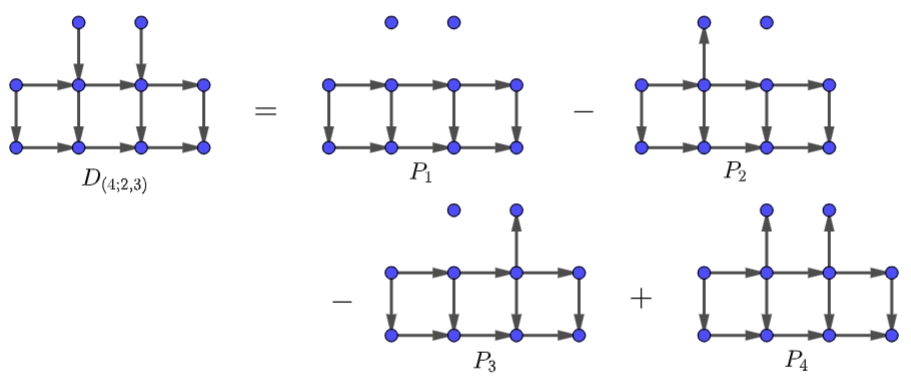}
\caption{An example for the decomposition via the inclusion-exclusion principle.}
\label{Tu3}
\end{figure}
The equation in Fig.~\ref{Tu3} means that
\begin{align*}
e(D_{(4;2,3)})=e(P_1)-(e(P_2)+e(P_3))+e(P_4).
\end{align*}
Recall that $I_m$ denotes an $m$-element antichain.
It is clear that
$$e(P_1)=e(I_{2}+U_{(4;0)})=\binom{10}{2}\cdot 2!\cdot \mathrm{Cat}(4)$$
and
$$e(D_{(4;2,3)})= e(I_{2}+U_{(4;0)})-(e(I_1+U_{(4;2)}) +e(I_1+U_{(4;3)})) +e(U_{(4;2,3)}).$$

The above discussion can be generalized to the general case.
Given a positive integer sequence $1\leq i_1<i_2<\cdots <i_k\leq n$, by an application of the inclusion–exclusion principle, we obtain
\begin{align*}
e(D_{(n; i_1,i_2,\ldots,i_k)})&=\sum_{j=0}^k (-1)^j \sum_{(h_1,\ldots,h_j)\subseteq (i_1,\ldots,i_k)} e(I_{k-j}+U_{(n; h_1,\ldots,h_j)})
\\ &=\sum_{j=0}^k (-1)^j \sum_{(h_1,\ldots,h_j)\subseteq (i_1,\ldots,i_k)} \binom{2n+k}{k-j} e(I_{k-j}) e(U_{(n; h_1,\ldots,h_j)})
\\ &=\sum_{j=0}^k (-1)^j \binom{2n+k}{k-j} (k-j)! \sum_{(h_1,\ldots,h_j)\subseteq (i_1,\ldots,i_k)} e(U_{(n; h_1,\ldots,h_j)}).
\end{align*}
The summation of both sides over all integer sequences $1\leq i_1<i_2<\cdots <i_k\leq n$ yields
\begin{align*}
b_{n,k}&=\sum_{j=0}^k (-1)^j \binom{2n+k}{k-j} (k-j)! \sum_{1\leq i_1<i_2<\cdots <i_k\leq n} \sum_{(h_1,\ldots,h_j)\subseteq (i_1,\ldots,i_k)} e(U_{(n; h_1,\ldots,h_j)})
\\ &= \sum_{j=0}^k (-1)^j \binom{2n+k}{k-j} (k-j)! \sum_{1\leq h_1<\cdots <h_j\leq n} \binom{n-j}{k-j} e(U_{(n; h_1,\ldots,h_j)})
\\ &= \sum_{j=0}^k (-1)^j \binom{2n+k}{k-j} (k-j)! \binom{n-j}{k-j} u_{n,j}.
\end{align*}
This completes the proof.
\end{proof}

By Theorem \ref{BNK-UNK}, we get the following result.
\begin{cor}
Let $n$ be a positive integer and $0\leq k\leq n$. Then we have
\begin{align*}
(-1)^ku_{n,k}=b_{n,k}-\sum_{i=0}^{k-1} (-1)^i \binom{2n+k}{k-i}\binom{n-i}{k-i}\cdot (k-i)!\cdot u_{n,i}.
\end{align*}
\end{cor}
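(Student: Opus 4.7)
The plan is to obtain the stated identity as a direct rearrangement of Theorem~\ref{BNK-UNK}. First I would write down the formula from that theorem,
\[
b_{n,k}=\sum_{i=0}^k (-1)^i \binom{2n+k}{k-i}\binom{n-i}{k-i}\cdot (k-i)!\cdot u_{n,i},
\]
and then split off the top-index term $i=k$ from the summation. For that term one has $\binom{2n+k}{0}=1$, $\binom{n-k}{0}=1$, and $(k-k)!=1$, so the $i=k$ contribution collapses to exactly $(-1)^k u_{n,k}$. Moving all remaining terms (indexed by $0\le i\le k-1$) to the other side and isolating $(-1)^k u_{n,k}$ yields the claimed identity.

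Since this is purely an algebraic rewriting of the previous theorem, there is no serious obstacle: the only thing to check is the evaluation of the $i=k$ summand, which is immediate. In particular, the identity can be viewed as a recursive definition of $u_{n,k}$ in terms of $b_{n,k}$ and the lower-index values $u_{n,0},\dots,u_{n,k-1}$, which is presumably the form that will be used in the subsequent derivation of the recurrence for $b_{n,k}$ in Theorem~\ref{bnk-formula-Recu-three}.
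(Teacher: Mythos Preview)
Your proposal is correct and matches the paper's approach: the corollary is stated immediately after Theorem~\ref{BNK-UNK} with the one-line justification ``By Theorem~\ref{BNK-UNK}, we get the following result,'' and your splitting off of the $i=k$ summand is exactly the trivial rearrangement intended.
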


A proof similar to that of Theorem \ref{BNK-UNK} gives the following result.
\begin{thm}\label{UNK-To-BNK}
Let $n$ be a positive integer and $0\leq k\leq n$. Then we have
\begin{align*}
u_{n,k}=\sum_{i=0}^k (-1)^i \binom{2n+k}{k-i}\binom{n-i}{k-i}\cdot (k-i)!\cdot b_{n,i}.
\end{align*}
\end{thm}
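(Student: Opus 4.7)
The plan is to prove Theorem~\ref{UNK-To-BNK} by mimicking the proof of Theorem~\ref{BNK-UNK} verbatim, with the roles of the posets $D_{(n;i_1,\ldots,i_k)}$ and $U_{(n;i_1,\ldots,i_k)}$ exchanged. The key observation is that the inclusion–exclusion identity underlying the proof of Theorem~\ref{BNK-UNK} is intrinsically symmetric in the two orientations of an arrow: for any pair of distinct elements $x,y$ in a poset $P$ one has
\[
e\bigl(P \cup \{x<y\}\bigr) \;=\; e(P) \;-\; e\bigl(P \cup \{x>y\}\bigr),
\]
which is symmetric under swapping "$<$" and "$>$". Iterating this relation over the $k$ marked positions produces a decomposition in which the two arrow orientations can play interchangeable roles.

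The first step is to establish, by inclusion–exclusion on the $k$ "reversed" arrows of $U_{(n;i_1,\ldots,i_k)}$, the identity
\begin{equation}\label{eq:IE-UD}
e(U_{(n;i_1,i_2,\ldots,i_k)}) \;=\; \sum_{j=0}^{k}(-1)^{j} \sum_{(h_1,\ldots,h_j)\subseteq (i_1,\ldots,i_k)} e\bigl(I_{k-j}\,+\,D_{(n;h_1,\ldots,h_j)}\bigr).
\end{equation}
The derivation is completely parallel to that of the analogous identity for $e(D_{(n;i_1,\ldots,i_k)})$ in the proof of Theorem~\ref{BNK-UNK}: at each of the $k$ marked positions one either keeps the arrow in its $U$-orientation (so that after reversal at the positions indexed by $(h_1,\ldots,h_j)$ we obtain a subposet isomorphic to $D_{(n;h_1,\ldots,h_j)}$) or removes it, decoupling the corresponding cell as an isolated antichain element contributing to the summand $I_{k-j}$.

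The second step is routine. By Lemma~\ref{Lem-Direct-sum-ep},
\[
e\bigl(I_{k-j}+D_{(n;h_1,\ldots,h_j)}\bigr) \;=\; \binom{2n+k}{k-j}\,(k-j)!\cdot e(D_{(n;h_1,\ldots,h_j)}).
\]
Substituting into \eqref{eq:IE-UD}, summing over all sequences $1\le i_1<i_2<\cdots<i_k\le n$, and interchanging the order of summation, a fixed $j$-subsequence $1\le h_1<\cdots<h_j\le n$ appears with multiplicity $\binom{n-j}{k-j}$ (the number of $k$-supersequences in $[n]$). Using the definition $b_{n,j}=\sum_{(h_1,\ldots,h_j)} e(D_{(n;h_1,\ldots,h_j)})$ then yields the claimed formula.

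The main obstacle is the verification of the inclusion–exclusion decomposition \eqref{eq:IE-UD}: one must carefully track which subposet is produced when a given subset of the marked positions has its arrows kept (in $U$-orientation, later reversed into $D$-orientation) while the complement has its arrows removed. This is exactly the structural duality between $D$ and $U$ already exploited in the proof of Theorem~\ref{BNK-UNK}, so the real work reduces to transcribing that argument with the roles of $D$ and $U$ swapped; once \eqref{eq:IE-UD} is in hand, the remainder of the computation is straightforward bookkeeping.
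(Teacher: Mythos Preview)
Your proposal is correct and follows exactly the approach the paper intends: the paper simply states that ``A proof similar to that of Theorem~\ref{BNK-UNK} gives the following result,'' and your argument carries this out by swapping the roles of $D$ and $U$ in the inclusion--exclusion decomposition. The identity~\eqref{eq:IE-UD}, the use of Lemma~\ref{Lem-Direct-sum-ep}, and the interchange-of-summation yielding the factor $\binom{n-j}{k-j}$ are all the precise dual steps of the proof of Theorem~\ref{BNK-UNK}.
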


\subsection{Recurrence for $b_{n,k}$}\label{Section-Recurrence-for-bnk}

Let $n$ be a positive integer.
Given a positive integer sequence
$$1\leq i_1<\cdots <i_{k-s}\leq j< i_{k-s+1}<\cdots <i_k\leq n,$$ define the poset $R_{(n; i_1,\ldots,i_{k-s},j,i_{k-s+1},\ldots,i_k)}$ as shown in Fig.~\ref{Tu6}, where the relation $s \prec t$ is conventionally represented by an arrow $s \to t$.
In Fig.~\ref{Tu6}, among the downward arrows from the upper layer, there are $s$ to the right of vertex $v_j$ and $k-s$ to the left.

From an alternative perspective, the Hasse diagram of the poset $R_{(n; i_1,\ldots,i_{k-s},j,i_{k-s+1},\ldots,i_k)}$ is formed by connecting those of posets $\widetilde{F}_{(j; i_1,i_2,\ldots,i_{k-s})}$ and $D_{(n-j; i_{k-s+1}-j,i_{k-s+2}-j,\ldots,i_{k}-j)}$ with a directed edge $v_j\to v_{j+1}$. For convenience, we denote $R_{(n; i_1,\ldots,i_{k-s},j,i_{k-s+1},\ldots,i_k)}$ by
$$\widetilde{F}_{(j; i_1,i_2,\ldots,i_{k-s})}\rightarrow D_{(n-j; i_{k-s+1}-j,i_{k-s+2}-j,\ldots,i_{k}-j)}$$

\begin{figure}[htp]
\centering
\includegraphics[width=0.95\linewidth]{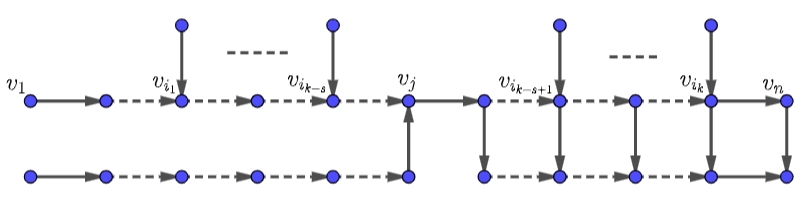}
\caption{The Hasse diagram of the poset $R_{(n; i_1,\ldots,i_{k-s},j,i_{k-s+1},\ldots,i_k)}$.}
\label{Tu6}
\end{figure}

We define $r_{n,k}$ as the sum of the number of linear extensions of all posets $R_{(n; i_1,\ldots,i_{k-s},j,i_{k-s+1},\ldots,i_k)}$, i.e.,
\begin{align*}
r_{n,k}=\sum_{j=1}^n \sum_{s=0}^k \sum_{1\leq i_1<\cdots <i_{k-s}\leq j \atop j< i_{k-s+1}<\cdots <i_k\leq n} e(R_{(n; i_1,\ldots,i_{k-s},j,i_{k-s+1},\ldots,i_k)}).
\end{align*}

\begin{lem}\label{Rnk-js}
Let $n$ be a positive integer and $0\leq k\leq n$. Then we have
\begin{align*}
r_{n,k}=\sum_{j=1}^n \sum_{s=0}^k \binom{2j+k-s-1}{j}\cdot f_{j,k-s}\cdot \left(\sum_{i=0}^s(-1)^i\binom{2n+k}{s-i}\binom{n-j-i}{s-i}(s-i)!\cdot u_{n-j,i}\right).
\end{align*}
\end{lem}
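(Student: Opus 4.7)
The strategy is to adapt the inclusion-exclusion scheme in the proof of Theorem~\ref{BNK-UNK} to the poset $R_{(n; i_1,\ldots,i_{k-s},j,i_{k-s+1},\ldots,i_k)}$. The crucial structural observation is that in $\widetilde{F}_{(j; i_1,\ldots,i_{k-s})}$ the vertex $v_j$ is the unique maximum (this is implicit in the proof of Lemma~\ref{Linear-Ex-tildeF}, where $\omega(v_j)$ equals the full size of $\widetilde{F}$), so the connecting edge $v_j \to v_{j+1}$ places $\widetilde{F}_{(j;\cdot)}$ entirely below the ``spine'' of $D_{(n-j;\cdot)}$ (the top and middle rows). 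However, the bottom-row vertices $w_1,\ldots,w_s$ of the $D$-part are comparable only to their own middle-row vertices, and thus remain incomparable to $\widetilde{F}_{(j;\cdot)}$; this is exactly why $e(R) \neq e(\widetilde{F})\cdot e(D)$ in general, and motivates an inclusion-exclusion over the $s$ constraints $w_l \to v^D_{i_{k-s+l}-j}$.

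For each subset of those constraints indexed by $(h_1,\ldots,h_i) \subseteq (i_{k-s+1}-j,\ldots,i_k-j)$ of size $i$, reverse the $i$ selected arrows (so each chosen $w$ is forced above its corresponding $v^D$) and isolate the remaining $s - i$ arrows; the resulting poset is
\[
R^{(h_1,\ldots,h_i)} = I_{s-i} + \bigl(\widetilde{F}_{(j; i_1,\ldots,i_{k-s})} \oplus U_{(n-j; h_1,\ldots,h_i)}\bigr).
\]
After reversal, each reversed $w$-vertex lies above $v_j$ via the transitive chain $v_j \prec v_1^D \preceq v^D_{h_l} \prec w_l$, confirming the ordinal-sum decomposition $\widetilde{F}_{(j;\cdot)} \oplus U_{(n-j; h_1,\ldots,h_i)}$. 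Applying Lemma~\ref{Lem-Direct-sum-ep} then gives
\[
e\bigl(R^{(h_1,\ldots,h_i)}\bigr) = \binom{2n+k}{s-i}(s-i)!\cdot e(\widetilde{F}_{(j;\vec{i})})\cdot e(U_{(n-j; h_1,\ldots,h_i)}).
\]
Repeating the classification argument from the proof of Theorem~\ref{BNK-UNK} (classify each linear extension of the fully-isolated poset $I_s + (\widetilde{F}_{(j;\cdot)} \oplus D'_{(n-j;\cdot)})$, where $D'$ denotes $D$ with its $w$-vertices removed, according to which of the $s$ constraints it violates) produces the signed sum
\[
e\bigl(R_{(n;\cdot)}\bigr) = e(\widetilde{F}_{(j;\vec{i})}) \sum_{i=0}^s (-1)^i \binom{2n+k}{s-i}(s-i)! \sum_{(h)\subseteq(\vec{i'}),\, |h|=i} e(U_{(n-j; h)}),
\]
where $\vec{i'} = (i_{k-s+1}-j,\ldots,i_k-j)$.

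Finally, I would sum over all admissible index sequences. Swapping the order of summation so that the subset $(h) \subseteq [n-j]$ of size $i$ is outermost, each fixed $(h)$ has exactly $\binom{n-j-i}{s-i}$ supersets $(\vec{i'})$ of size $s$ in $[n-j]$, giving the contribution $\binom{n-j-i}{s-i}\cdot u_{n-j,i}$. Summing over $(\vec{i}) \subseteq [j]$ of size $k-s$ introduces the factor $\sum_{(\vec{i})} e(\widetilde{F}_{(j;\vec{i})}) = \widetilde{f}_{j,k-s} = \binom{2j+k-s-1}{j}\cdot f_{j,k-s}$ by Lemma~\ref{Linear-Ex-tildeF}, and summing over $j$ and $s$ produces the claimed identity for $r_{n,k}$. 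The most delicate step will be the rigorous setup of the ordinal-sum decomposition after reversal and of the inclusion-exclusion itself, since the $w$-vertices interact non-trivially with both sides of the connecting edge; however, both points reduce to verifying the transitivity argument noted above and mimicking the already-established identity in the proof of Theorem~\ref{BNK-UNK}.
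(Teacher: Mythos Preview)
Your proposal is correct and follows essentially the same approach as the paper: both apply the inclusion--exclusion scheme of Theorem~\ref{BNK-UNK} to the $D$-part of $R$ while keeping the $\widetilde{F}$-part intact, then invoke Lemma~\ref{Lem-Direct-sum-ep} and sum. Your write-up is in fact more explicit than the paper's---you spell out the ordinal-sum decomposition $I_{s-i} + (\widetilde{F}_{(j;\cdot)} \oplus U_{(n-j;\cdot)})$ and verify via the transitive chain $v_j \prec v_{j+1} \preceq v^D_{h_l} \prec w_l$ that the reversed legs lie above all of $\widetilde{F}$, whereas the paper simply says ``retain the $\widetilde{F}$ portion and apply the same decomposition to its latter part.''
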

\begin{proof}
We know that the Hasse diagram of the poset $R_{(n; i_1,\ldots,i_{k-s},j,i_{k-s+1},\ldots,i_k)}$ is formed by connecting those of posets $\widetilde{F}_{(j; i_1,i_2,\ldots,i_{k-s})}$ and $D_{(n-j; i_{k-s+1}-j,i_{k-s+2}-j,\ldots,i_{k}-j)}$ with a directed edge $v_j\to v_{j+1}$.
By Theorem \ref{BNK-UNK}, the Hasse diagram of the poset $D_{(n-j; i_{k-s+1}-j,i_{k-s+2}-j,\ldots,i_{k}-j)}$ can be decomposed using the inclusion-exclusion principle. Similarly, for the Hasse diagram of poset $R_{(n; i_1,\ldots,i_{k-s},j,i_{k-s+1},\ldots,i_k)}$, we can retain the $\widetilde{F}_{(j; i_1,i_2,\ldots,i_{k-s})}$ portion and apply the same decomposition to its latter part.
For each decomposed Hasse diagram, its linear extensions can be computed by employing Lemma \ref{Lem-Direct-sum-ep}.

For fixed $s$ and $j$, Lemma \ref{Linear-Ex-tildeF} and Theorem \ref{BNK-UNK} yield
\begin{align*}
&\sum_{1\leq i_1<\cdots <i_{k-s}\leq j< i_{k-s+1}<\cdots <i_k\leq n} e(R_{(n; i_1,\ldots,i_{k-s},j,i_{k-s+1},\ldots,i_k)})
\\=&\left(\binom{2j+k-s-1}{j}\cdot f_{j,k-s}\right)\cdot \left(\sum_{i=0}^s(-1)^i\binom{2n+k}{s-i}\binom{n-j-i}{s-i}(s-i)!\cdot u_{n-j,i}\right).
\end{align*}
This completes the proof.
\end{proof}

\begin{thm}\label{bnk-formula-two}
Let $n$ be a positive integer and $0\leq k\leq n$. Then we have
\begin{align*}
b_{n,k}&=\binom{2n+k}{n}\cdot f_{n,k}
\\&\quad-\sum_{j=1}^n \sum_{s=0}^k \binom{2j+k-s-1}{j}\cdot f_{j,k-s}\cdot \left(\sum_{i=0}^s(-1)^i\binom{2n+k}{s-i}\binom{n-j-i}{s-i}(s-i)!\cdot u_{n-j,i}\right).
\end{align*}
\end{thm}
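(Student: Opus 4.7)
The strategy is to reduce Theorem~\ref{bnk-formula-two} to the single combinatorial identity
\begin{equation*}
\sum_{1\leq i_1<\cdots<i_k\leq n} e(F_{(n;i_1,\ldots,i_k)}+L_n) \;=\; b_{n,k}\;+\;r_{n,k}.
\end{equation*}
Once this is in hand, Lemma~\ref{Fnk-Directsum-Ln} rewrites the left-hand side as $\binom{2n+k}{n}\cdot f_{n,k}$, and Lemma~\ref{Rnk-js} substitutes the explicit expression for $r_{n,k}$ as the double/triple sum appearing in the statement. Assembling the two gives exactly the formula claimed in the theorem, so the entire task reduces to proving the displayed identity by a decomposition of linear extensions.

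To prove the identity, fix a sequence $1\leq i_1<\cdots<i_k\leq n$. The posets $F_{(n;i_1,\ldots,i_k)}+L_n$ and $D_{(n;i_1,\ldots,i_k)}$ share the same underlying set of $2n+k$ elements, but $D$ carries additional ``bridging'' cover relations linking the chain $L_n$ to the tree $F_{(n;i_1,\ldots,i_k)}$ that are absent from the disjoint sum. Every linear extension of $F+L_n$ either respects every bridging relation of $D$ --- contributing to $e(D_{(n;i_1,\ldots,i_k)})$ and hence to $b_{n,k}$ after summing over $(i_1,\ldots,i_k)$ --- or it violates at least one such relation.

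I would classify every violating extension by the smallest index $j\in\{1,\ldots,n\}$ at which a bridging relation first fails. The portion of the extension on $\{v_1,\ldots,v_j\}$ together with its attached tree-vertices at positions $i_\ell\leq j$ is then forced to be a linear extension of $\widetilde F_{(j;i_1,\ldots,i_{k-s})}$, where $s$ is the number of indices $i_\ell>j$; the complementary portion inherits the structure of $D_{(n-j;\,i_{k-s+1}-j,\ldots,i_k-j)}$. Concatenating the two pieces via the cover $v_j\to v_{j+1}$ reconstructs precisely the poset $R_{(n;i_1,\ldots,i_{k-s},j,i_{k-s+1},\ldots,i_k)}$. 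Summing over the breakpoint $j$, the parameter $s$, and all admissible index sequences captures every violating extension and produces $r_{n,k}$, after Theorem~\ref{BNK-UNK} is invoked to expand the $D$-block above the breakpoint as an alternating sum of $U$-blocks, matching the $u_{n-j,i}$ terms in Lemma~\ref{Rnk-js}.

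The main obstacle will be verifying that the breakpoint-based classification is both exhaustive and injective. Concretely, one must show that recording only the pair $(j,s)$ together with the two order-reversing labellings of $\widetilde F_{(j;i_1,\ldots,i_{k-s})}$ and $D_{(n-j;\,i_{k-s+1}-j,\ldots,i_k-j)}$ uniquely reconstructs the original linear extension of $F+L_n$ (so no extension is double-counted when multiple bridging relations fail simultaneously). A secondary technical point is checking that the inclusion--exclusion passage from $D$-blocks to $U$-blocks at the top part, via Theorem~\ref{BNK-UNK}, produces term-by-term exactly the inner alternating sum $\sum_{i=0}^s(-1)^i\binom{2n+k}{s-i}\binom{n-j-i}{s-i}(s-i)!\,u_{n-j,i}$ that appears inside Lemma~\ref{Rnk-js}; this requires tracking how the $k-s$ ``below-breakpoint'' indices interact with the binomial coefficient $\binom{2n+k}{s-i}$ coming from shuffling the $k-s$ tree-entries of $I_{k-j}$ into $D$.
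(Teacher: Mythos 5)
Your skeleton is the paper's: write $b_{n,k}=\sum e(F_{(n;i_1,\ldots,i_k)}+L_n)-r_{n,k}$, evaluate the first sum by Lemma~\ref{Fnk-Directsum-Ln} and $r_{n,k}$ by Lemma~\ref{Rnk-js}, and justify the identity by splitting the linear extensions of $F_{(n;i_1,\ldots,i_k)}+L_n$ into those that respect all $n$ column relations of $D_{(n;i_1,\ldots,i_k)}$ (which give $b_{n,k}$) and those that violate at least one. The substantive error is in your classification of the violating extensions: you take $j$ to be the \emph{smallest} index at which a column relation fails, whereas the argument requires the \emph{largest} (rightmost) such index, which is what the paper uses. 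With the rightmost convention the relations at columns $j+1,\dots,n$ all hold, so the right-hand block really is $D_{(n-j;\,i_{k-s+1}-j,\dots,i_k-j)}$; and the single reversed relation at column $j$, combined with the two horizontal chains, forces every element in positions $\le j$ below $v_j$, which is exactly why the completely unconstrained columns $<j$ yield the tree $\widetilde{F}_{(j;i_1,\dots,i_{k-s})}$ and the class is literally $\mathcal{L}(R_{(n;i_1,\ldots,i_{k-s},j,i_{k-s+1},\ldots,i_k)})$. With your leftmost convention the two blocks come out the other way around: columns $<j$ are forced to retain all their bridging relations (a $D$-type block on the left, strictly more constrained than $\widetilde{F}$), while columns $>j$ are completely free (not a $D$-block), so the class is \emph{not} $\mathcal{L}(R_{(n;\ldots,j,\ldots)})$ and Lemma~\ref{Rnk-js} cannot be invoked. (One could recover the numerical total by a left--right reflection after summing over all index sequences, but no such step appears in your write-up.)

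Once the breakpoint is taken to be the rightmost violated column, the ``main obstacle'' you flag evaporates: every violating extension has a unique rightmost violated column, so the classes are disjoint and exhaustive by construction, and no reconstruction or injectivity argument is needed, because each class is by definition the set of linear extensions of the poset obtained from $F_{(n;i_1,\ldots,i_k)}+L_n$ by adjoining the reversed relation at column $j$ together with the relations at columns $j+1,\dots,n$ --- and that poset is $R_{(n;\ldots,j,\ldots)}$. Your secondary worry is likewise moot: the passage from the $D$-block to the alternating sum in $u_{n-j,i}$ via Theorem~\ref{BNK-UNK} is already carried out inside the proof of Lemma~\ref{Rnk-js}, so at the level of Theorem~\ref{bnk-formula-two} nothing further needs to be tracked.
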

\begin{proof}
By the definition of $e(D_{(n; i_1,i_2,\ldots,i_k)})$ we see that the number $e(D_{(n; i_1,i_2,\ldots,i_k)})$ is equal to the number of labelings of the poset with entries the distinct integers $1,2,\ldots,2n+k$, such that every row and column is increasing along the direction of the arrow.

For a positive integer sequence $1\leq i_1<i_2<\cdots <i_k\leq n$, we observe the Hasse diagram of the poset $e(D_{(n; i_1,i_2,\ldots,i_k)})$: two rightward lines are connected by $n$ downward arrows.
Thus, the number $e(D_{(n; i_1,i_2,\ldots,i_k)})$ equals the number $e(F_{(n; i_1,i_2,\ldots,i_k)}+L_n)$ minus the number of linear extensions of all posets containing an upward arrow.
These posets with up arrows can be partitioned into $n$ classes according to the position of the rightmost up arrow, where each class is illustrated in Fig.~\ref{Tu6} for a fixed $j$.
Therefore, we obtain
\begin{align*}
b_{n,k}=\sum_{1\leq i_1<\cdots <i_k\leq n} e(F_{(n; i_1,i_2,\ldots,i_k)}+L_n)-\sum_{j=1}^n \sum_{s=0}^k \sum_{1\leq i_1<\cdots <i_{k-s}\leq j \atop j< i_{k-s+1}<\cdots <i_k\leq n} e(R_{(n; i_1,\ldots,i_{k-s},j,i_{k-s+1},\ldots,i_k)}).
\end{align*}
The theorem follows from Lemmas \ref{Fnk-Directsum-Ln} and \ref{Rnk-js}.
\end{proof}

Now, we can show Theorem~\ref{bnk-formula-Recu-three} by using Lemma \ref{fnk-closed-formula}, Theorem \ref{UNK-To-BNK} and Theorem \ref{bnk-formula-two}.

\begin{proof}[{\bf Proof of Theorem~\ref{bnk-formula-Recu-three}}]
According to Lemma \ref{fnk-closed-formula}, Theorem \ref{UNK-To-BNK}, and Theorem \ref{bnk-formula-two}, it follows that
\begin{align*}
b_{n,k}&=\binom{2n+k}{n}\cdot f_{n,k}-\sum_{j=1}^n \sum_{s=0}^k \binom{2j+k-s-1}{j}\cdot f_{j,k-s}\cdot
\\ & \quad\quad\quad\quad \cdot \Bigg(\sum_{i=0}^s(-1)^i\binom{2n+k}{s-i}\binom{n-j-i}{s-i}(s-i)!\cdot
\\ & \quad\quad\quad\quad\quad\quad\quad\quad \cdot \sum_{m=0}^i(-1)^m\binom{2(n-j)+i}{i-m}\binom{n-j-m}{i-m}(i-m)!\cdot b_{n-j,m}\Bigg),
\end{align*}
where $f_{n,k}$ has closed formula given in~\eqref{eq:fnkclose}.
Therefore, we obtain
\begin{small}
\begin{align*}
b_{n,k}&=\binom{2n+k}{n}\cdot f_{n,k}-\sum_{j=1}^n \sum_{s=0}^k \sum_{i=0}^s \sum_{m=0}^i  \binom{2j+k-s-1}{j}\cdot f_{j,k-s}(-1)^{i+m}\cdot
\\ & \quad \cdot \Bigg(\binom{2n+k}{s-i}\binom{n-j-i}{s-i}(s-i)! \cdot\binom{2(n-j)+i}{i-m}\binom{n-j-m}{i-m}(i-m)!\cdot b_{n-j,m}\Bigg)
\\&=\binom{2n+k}{n}\cdot f_{n,k}-\sum_{j=1}^n \sum_{s=0}^k \sum_{m=0}^s
\frac{(-1)^{2m}\cdot(j+k-s)\cdot(n-j-m)!\cdot(k+2j-m-1)!}{2^{k-s}\cdot (j-k+s)!\cdot(k-s)!\cdot j!\cdot (s-m)!\cdot(n-j-s)!}\cdot b_{n-j,m},
\end{align*}
\end{small}
where the last equality is obtained from \texttt{Maple}.
\end{proof}

\end{document}